\newcommand{\be}{\begin{enumerate}}
\newcommand{\ee}{\end{enumerate}}
\newcommand{\pa}{\partial}
\numberwithin{equation}{section}
\newcommand{\R}{\mathbb{R}}
\newcommand{\T}{\mathbb{T}}
\newcommand{\na}{\nabla}
\newcommand{\X}{Y}
\newcommand{\V}{W}
\newtheorem{thm}{Theorem}[section]
\newtheorem{coro}{Corollary}[section]
\newtheorem{prop}{Proposition}[section]
\newtheorem{lem}{Lemma}[section]
\newtheorem{rem}{Remark}[section]
\newcommand{\eps}{\varepsilon}
\begin{document}

\title{Asymptotic stability of equilibria for screened Vlasov-Poisson systems via pointwise dispersive estimates}

\author{Daniel Han-Kwan \and Toan T. Nguyen\and Fr\'ed\'eric Rousset}

\renewcommand{\thefootnote}{\fnsymbol{footnote}}



\maketitle

\begin{abstract}

We revisit the proof of  Landau damping  near stable homogenous equilibria of Vlasov-Poisson systems with screened interactions in the whole space $\R^d$ (for $d\geq3$)
that was first established by Bedrossian, Masmoudi and Mouhot in \cite{BMM2}. Our proof  follows a Lagrangian approach and relies on precise pointwise in time  dispersive estimates in the physical space for the linearized problem that should be of independent interest. This allows to cut down the smoothness of the initial data required in \cite{BMM2} (roughly, we only need Lipschitz regularity).
 Moreover,  the time decay estimates we prove  are essentially sharp, being the same as those for  free transport, up to a logarithmic
 correction. 
\end{abstract}


\section{Introduction}
In this paper, we are interested in the large time behavior of solutions to the Vlasov-Poisson system  with screening 
\begin{equation}\label{iVP}
\left \{ 
\begin{aligned}
&\pa_t f_{i} + v\cdot \na_x f_{i} + E \cdot \na_v f_{i} = 0,\\
&E = -\nabla_x (1-  \Delta_x)^{-1} ( \rho_{i}- 1), \qquad \rho_{i}(t,x)= \int_{\R^d} f_{i} (t,x,v)\, dv,
\end{aligned}
\right.
\end{equation}
on the whole space $x\in \R^d, v\in \R^d$, $d\ge 3$, where $f_i = f_i(t,x,v)\ge 0$ and $E = E(t,x)$. The screening effect comes from the fact that the interaction potential associated to $\text{Id}-\Delta$ is exponentially decaying as opposed to the Coulomb potential associated to $-\Delta$. 
 This system is sometimes referred to as Vlasov-Yukawa and can also be seen as the Vlasov-Poisson system describing the dynamics of ions, in a background of electrons that satisfy a linearization of the Maxwell-Boltzmann law (we refer for example to  \cite{Bouchut,DHK1,ClaudeMB}).  

The global regularity of finite energy solutions for the  Vlasov-Poisson system in the case of three or lower spatial dimension is by now classical (\cite{Pfaffelmoser,Schaeffer91, Horst93, Glassey-book,Lions-Perthame}). The asymptotic behavior of solutions
 for initial data  near  the trivial  equilibrium $0$ has also been the topic of many studies.
  This was first   established in dimension $d \geq 3$ in the unscreened case by Bardos and Degond in \cite{Bardos-Degond}, following a Lagrangian approach. More recently the sharp
   faster decay of derivatives was established in \cite{Hwang}.  This was extended to \eqref{iVP} in dimension $d \geq 2$
    in \cite{Choi} making use of the better decay of the electric field in the case of screened interactions. Other approaches based on vector fields   \cite{Smu} or Fourier analysis \cite{Wang} (space-time resonances)  were also developed recently.

We are interested in the stability and the large time behavior of solutions near spatially homogeneous stationary states $\mu(v)$ such that $\int_{\R^d} \mu(v) \, dv =1$. Namely, we look for a solution under the form
 $ f_{i}(t,x,v)= \mu(v)+ f(t,x,v)$, where $f$ solves the perturbed system 
\begin{equation}
\label{VP}
\left \{ 
\begin{aligned}
&\pa_t f + v\cdot \na_x f +  E \cdot \na_v \mu = -  E \cdot \na_v f , \\
&E = -\nabla_x (1-  \Delta_x)^{-1} \rho, \qquad \rho(t,x)= \int_{\R^d} f (t,x,v)\, dv,\\
&f|_{t=0} = f_0.
\end{aligned}
\right.
\end{equation}
For a class of stable equilibria, we shall study the large time behavior of solutions to \eqref{VP} for suitably small initial data $f_0$. The dynamics of solutions is  expected to asymptotically approach that  of solutions to free transport, a scattering phenomenon that is often referred to as {\em Landau damping}.   Landau damping was proved on the torus $\T^d\times \R^d$ ($d\geq 1$) for data with Gevrey regularity \cite{MV,BMM1}, while on the whole space $\R^3\times \R^3$ it was recently established for the screened Vlasov-Poisson system \eqref{VP} by Bedrossian, Masmoudi and Mouhot in \cite{BMM2}, for data with finite Sobolev regularity. The proof in \cite{BMM2} is inspired by that for the torus case. Though dispersion on the whole space is used  at some crucial points in order to close the estimates in finite regularity, the approach is much more related to the one of \cite{BMM1} for the torus than that of Bardos-Degond (it is actually dispersive properties of the free transport  in the  frequency space  that are used to control the so-called echoes of \cite{MV}).

In this paper, we prove Landau damping and derive dispersive estimates for solutions to \eqref{VP} via a Lagrangian approach that is closer to the Bardos-Degond analysis \cite{Bardos-Degond} for the $\mu=0$ case.  Roughly speaking,
 after proving precise pointwise estimates for the linearized equation, the proof of nonlinear stability can be obtained almost
 in the same way as in the $\mu=0$ case. 
This will allow to strongly cut down the needed regularity on the initial data, as compared to \cite{BMM2}.

Let us now specify our assumptions on the equilibrium $\mu$ and state the main result of this paper. We assume 

 \begin{itemize}
 \item {\bf (H1)} $\mu\in L^1(\mathbb{R}^d)$ is a smooth decaying function satisfying $ \langle v \rangle^k  \nabla_{v} \mu  \in W^{2, \infty}$ and
 $\langle v \rangle^{4d+6}   \na_v \mu  \in W^{2d+5,1}$
for some $k>d$. 
 \item{\bf (H2)} $\mu$ satisfies the Penrose stability criterion: 
$$
\inf_{\gamma\geq 0} \inf_{\tau \in \R, \, \xi \in \R^d} \left| 1 - \int_0^{+\infty} e^{-(\gamma +i \tau)s}  { 1 \over 1 + |\xi|^2} i \xi \cdot \widehat{\na_v \mu} (s \xi) \, ds\right| \geq \kappa, 
$$
for some constant $\kappa>0$, where $\widehat{\na_v \mu}$ is the Fourier transform of $\na_v \mu$ (see~\eqref{fourier-space} for the convention we use). 
\end{itemize}

%

\subsection{Main result}

Our main result is as follows. We recall that we consider $d\geq 3$.

\begin{thm}
\label{theomain}  Assume that (H1) and (H2) are satisfied. 

  Let $k>d$ and $\sigma,p  \in (1,+\infty)$ satisfying $p(\sigma-1)>2d$.  Let $f_0\in W^{1,\infty} \cap W^{1,1}$ be an initial condition for~\eqref{VP} with 
  \begin{equation}
   \label{extra}
   \| \langle v\rangle^{k/p'} f_{0}\|_{W^{\sigma, p}} <+\infty, \qquad \text{for some  } \sigma,p \in (1,+\infty) \text{   satisfying   } p(\sigma-1)>2d
\end{equation}
   and
   \begin{equation}
   \label{small}
 \| \langle v\rangle^k f_{0}\|_{W^{1, \infty}} +   \|f_{0}\|_{W^{1, 1}} + \|f_{0}\|_{L^1_{x}L^\infty_{v}} + 
    \|\nabla_{x,v}f_{0}\|_{L^1_{x}L^\infty_{v}} \leq \eps_{0}.
\end{equation}
     Then  if $\eps_0>0$ is small enough, there exists a unique global solution of \eqref{VP}
      such that
   \begin{equation}
   \label{eq-thm} \|\rho (t) \|_{L^1} + \langle t\rangle \|\nabla_x \rho(t) \|_{L^1} + \langle t\rangle^d \|\rho (t) \|_{L^\infty}  +  \langle t\rangle^{d+1} \|\nabla_x \rho (t) \|_{L^\infty}
 \lesssim {  \eps_{0}} \log( 2 +t),
 \end{equation} 
for  all $t\ge 0$, with $\langle \cdot \rangle = \sqrt{1+|\cdot|^2}$.       
\end{thm}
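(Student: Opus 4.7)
My plan is a continuation/bootstrap argument built around the decay norm
$$N(T) := \sup_{0\le t\le T}\frac{1}{\log(2+t)}\Big(\|\rho(t)\|_{L^1} + \t\|\na_x\rho(t)\|_{L^1} + \t^d\|\rho(t)\|_{L^\infty} + \t^{d+1}\|\na_x\rho(t)\|_{L^\infty}\Big),$$
for which local well-posedness in $W^{1,\infty}\cap W^{1,1}$ reduces everything to the a priori bound $N(T)\lesssim\eps_0$ under the assumption $N(T)\le 2C\eps_0$. Following Bardos--Degond I would work along the full nonlinear characteristics
$$\dot X=V,\quad \dot V = E(t,X),\qquad (X,V)(t)=(x,v),$$
so that $f(t,x,v)=f_0(X(0),V(0))$ and $\rho(t,x)=\int f_0(X(0;t,x,v),V(0;t,x,v))\,dv$. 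The bootstrap on $\rho$ combined with the two-derivative Yukawa smoothing yields pointwise bounds $\|E(t)\|_{L^\infty}\lesssim\eps_0\,\t^{-d}\log(2+t)$ and similarly for $\na_x E$, hence a quantitative straightening of the nonlinear flow against free transport.

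The heart of the argument is to reduce control of $\rho$ to a linear dispersive problem. Writing $\pa_t f + v\cdot\na_x f + E\cdot\na_v\mu = -E\cdot\na_v f$ and integrating the linear part by Duhamel in $v$, the density satisfies the scalar Volterra equation
$$\widehat{\rho}(t,\xi) = \widehat{S}(t,\xi) + \int_0^t K(t-s,\xi)\widehat{\rho}(s,\xi)\,ds,\qquad K(\sigma,\xi)=\tfrac{i\xi}{1+|\xi|^2}\cdot\widehat{\na_v\mu}(\sigma\xi),$$
where $S$ gathers the free-transport source $\int f_0(x-tv,v)\,dv$ and the quadratic term $\mathcal{Q}$ coming from $-E\cdot\na_v f$. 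Under (H2), Paley--Wiener/Laplace analysis produces a resolvent kernel $G(t,\xi)$ with decay and integrability comparable to $K$, leading to $\rho(t,x) = S(t,x) + \int_0^t[G(t-s)\ast_x S(s,\cdot)](x)\,ds$. I would then prove sharp pointwise-in-time dispersive estimates for $G$ in physical space, showing that this Duhamel convolution preserves the free-transport decay rates $\t^{-d}$ and $\t^{-d-1}$ up to a logarithm. The free-transport contribution itself is handled by the change of variables $y=x-tv$, which gives $\|\int f_0(x-tv,\cdot)\,dv\|_{L^\infty}\lesssim t^{-d}\|f_0\|_{L^1_xL^\infty_v}$ and $\|\na_x\int f_0(x-tv,\cdot)\,dv\|_{L^\infty}\lesssim t^{-d-1}\|\na_v f_0\|_{L^1_xL^\infty_v}$, both directly available from \eqref{small}; the extra regularity \eqref{extra} with $p(\sigma-1)>2d$ is used to produce the $L^1$-integrable-in-$\xi$ bounds required for sharp $L^\infty_x$ control after the Volterra inversion.

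For the nonlinear source $\mathcal{Q}(t,x)=\int_0^t\!\int E(s,x-(t-s)v)\cdot\na_v f(s,x-(t-s)v,v)\,dv\,ds$ one distributes the pointwise decay of $E$ (from the bootstrap) against the control of $\na_v f$ read off the characteristics via the $W^{1,\infty}$ part of \eqref{small} and the weight $\langle v\rangle^k$. The natural estimate produces a contribution of size $N(T)^2\log(2+T)$, the logarithm reflecting time integrals of the form $\int_0^t \s^{-1}\,ds$ generated by the echo-type resonance when $x-(t-s)v$ nearly aligns with a slow mode; for $\eps_0$ sufficiently small this closes the bootstrap and yields exactly \eqref{eq-thm}.

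The main obstacle, in my view, is the second step: producing pointwise physical-space dispersive estimates for the resolvent $G$ that are sharp enough to reproduce the free-transport rates $\t^{-d}$ and $\t^{-d-1}$ using only finite Sobolev regularity on $\mu$ and $f_0$. The Penrose hypothesis delivers only an $L^\infty_{\tau,\xi}$ resolvent bound; upgrading this to quantitative $L^1$-integrability in $(\tau,\xi)$ and then to pointwise decay of $G(t,x)$---while tracking how the weights $\langle v\rangle^k$, the number of velocity derivatives available on $\na_v\mu$ in (H1), and the Sobolev exponents $(\sigma,p)$ of \eqref{extra} interact---is where the essential technical content lies; once those linear pointwise estimates are in hand, the nonlinear closure is largely a bookkeeping exercise in the Bardos--Degond spirit.
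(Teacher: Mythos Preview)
Your overall architecture---bootstrap on the same weighted norm, Volterra equation for $\rho$ with resolvent kernel $G$, and pointwise physical-space decay estimates on $G$ as the technical heart---matches the paper. But the way you build the source term $S$ differs from the paper in a way that creates a real obstruction at the stated regularity.

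You propose $S = \text{(free transport of }f_0) + \mathcal{Q}$ with $\mathcal{Q}$ coming from the forcing $-E\cdot\nabla_v f$. The paper instead writes $f$ along the \emph{nonlinear} characteristics first, obtaining
\[
f(t,x,v)=f_0(X_{0,t},V_{0,t})-\int_0^t E(s,X_{s,t})\cdot\nabla_v\mu(V_{s,t})\,ds,
\]
so that the source becomes $S=\mathcal{I}+\mathcal{R}$ where $\mathcal{I}=\int f_0(X_{0,t},V_{0,t})\,dv$ and $\mathcal{R}$ is the \emph{difference} between the reaction term along nonlinear and along free characteristics. (Note in passing that your early formula $f(t,x,v)=f_0(X(0),V(0))$ is wrong for the perturbation $f$; the $\mu$-reaction integral is missing.) The point of the paper's choice is that $\mathcal{R}$ involves only $\nabla_v\mu$, which is smooth, whereas your $\mathcal{Q}$ carries $\nabla_v f$. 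When you go for the $Y^1_t$ bound, i.e.\ $\langle t\rangle^{d+1}\|\nabla_x\mathcal{Q}(t)\|_{L^\infty}$, any direct route---differentiating in $x$, or first changing variables $w=x-(t-s)v$ and then differentiating---produces $\nabla_x\nabla_v f$ or $\nabla_v^2 f$, which you do not control with $f_0\in W^{1,\infty}\cap W^{1,1}$. The paper sidesteps this precisely because its source involves $\nabla_v\mu$ instead of $\nabla_v f$: after the change of variables $w=x-tv$ and the straightening $v\mapsto\Psi_{s,t}(x,v)$, one $x$-derivative of $\mathcal{R}$ costs only one derivative of the characteristics and one more derivative of $\mu$, both available. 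If you try to rescue your $\mathcal{Q}$ by expressing $\nabla_v f$ through the characteristic representation of $f$, you are effectively reconstructing the paper's decomposition.

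Two smaller points. First, you misidentify the role of the extra regularity \eqref{extra}: in the paper it is used \emph{only} to guarantee local well-posedness and continuity of $t\mapsto\mathcal{N}(t)$ so the bootstrap is well-defined; it plays no part in the dispersive estimates on $G$ or in closing the bootstrap, and no smallness is required there. Second, your claim that the nonlinear source contributes $N(T)^2\log(2+T)$ would not close the bootstrap (the linear step already costs one logarithm); the paper's difference structure for $\mathcal{R}$ yields a clean $O(\eps^2)$ without an extra log, because the resulting time integrals are of the form $\int_0^t \log^2(2+s)\langle s\rangle^{-(d-1)}\,ds$, convergent for $d\ge 3$.
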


 Theorem \ref{theomain} proves that the solution of \eqref{VP} enjoys the same decay properties as the free transport up to a logarithmic
 correction. 
  Note that we also establish the sharp higher decay of derivatives, still up to a logarithmic correction.  The same decay holds for $E = -\nabla_x (1-  \Delta_x)^{-1} \rho$ via standard elliptic estimates. 
 Improved decay of higher derivatives may actually be obtained with the same method introduced in this paper, but we shall not dwell on this point for the sake of conciseness.

 Note that we have not tried to optimize the needed regularity of the initial data in this statement,  ~\eqref{extra} means that we  ask slightly  more than Lipschitz regularity for the initial condition $f_0$. Since we can take $p$ arbitrarily large, this means that $\sigma$ can be taken arbitrarily close to one.
  We could actually take $\sigma = 1 + { 2 d \over p}$ by replacing  $W^{\sigma, p}$ by the Besov space $B^{\sigma}_{p, 1}$
 (this would change in particular  the local existence result of Proposition \ref{prop-LWP}).
  Note that  we do not ask for any smallness in~\eqref{extra}. This is used only in order to ensure that 
  a certain quantity (namely $\mathcal{N}(t)$ defined in \eqref{def-iterN}), that we shall use for a bootstrap argument, is continuous in time. 
  
  We could actually even only ask
   $\sigma >{2d \over p}$, so that loosely speaking, merely smallness in H\"older norm (instead of Lipschitz) would be needed, but this would require to replace the  $W^{1, \infty}$ and $W^{1, 1}$ estimates  for the density $\rho$ proved in the paper by more technical $\mathscr{C}^{0,\alpha}$ and $B^{\alpha}_{1, \infty}$
   estimates.
 
 \bigskip
 
 As a consequence of Theorem~\ref{theomain}, we obtain the aforemetioned scattering property for the solution to~\eqref{VP}. 
 \begin{coro}
 \label{coro1}
 With the same assumptions and notations as in Theorem~\ref{theomain}, there is $f_\infty \in W^{1,\infty}$
  given by 
   $$f_{\infty}(x,v) = f_{0}\left(x + Y_{\infty}(x,v), v+ W_{\infty}(x,v)\right) + \mu\left(v+ W_{\infty}(x,v)\right) - \mu(v)$$
   such that
 \begin{equation}
 \| f(t,x+tv, v) - f_\infty(x,v) \|_{L^\infty_{x,v}} \lesssim {\eps_0}\frac{\log(2+t)}{\langle t \rangle^{d-1}},
 \end{equation}
 for all $t \geq 0$. 
 Moreover, we also have that  $  \| Y_{\infty}\|_{L^\infty_{x,v}} +  \| W_{\infty}\|_{L^\infty_{x,v}} \lesssim \eps_{0}.$
 \end{coro}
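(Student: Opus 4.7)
The plan is to use the Lagrangian representation of $f$ via characteristics, combined with the pointwise decay provided by Theorem~\ref{theomain}. Let $(X(s;t,x,v), V(s;t,x,v))$ denote the backward characteristic of \eqref{VP}, namely the solution of $\dot X = V$, $\dot V = E(s,X)$ on $s \in [0,t]$ with terminal condition $X(t) = x$, $V(t) = v$. The Vlasov equation implies that $f(s,X(s),V(s)) + \mu(V(s))$ is conserved along trajectories, so specializing to the characteristic landing at $(x+tv, v)$ at time $s = t$ yields
\begin{equation*}
f(t, x+tv, v) = f_0(x + Y(t,x,v), v + W(t,x,v)) + \mu(v + W(t,x,v)) - \mu(v),
\end{equation*}
where $Y(t,x,v) := X(0;t,x+tv,v) - x$ and $W(t,x,v) := V(0;t,x+tv,v) - v$. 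Integrating the ODE directly gives
\begin{equation*}
W(t,x,v) = -\int_0^t E(s, X(s;t,x+tv,v))\, ds, \qquad Y(t,x,v) = \int_0^t s\, E(s, X(s;t,x+tv,v))\, ds.
\end{equation*}
The pointwise decay $\|E(s)\|_{L^\infty} \lesssim \eps_0 \log(2+s)/\langle s\rangle^d$, which follows from \eqref{eq-thm} and the $L^\infty$-boundedness of the Bessel-type operator $\na_x(1-\Delta_x)^{-1}$, makes both integrals absolutely convergent for $d \geq 3$, so $\|Y(t)\|_{L^\infty} + \|W(t)\|_{L^\infty} \lesssim \eps_0$ uniformly in $t$.

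To pass to the limit $t\to\infty$, I would introduce the forward ``scattering map''
\begin{equation*}
\Psi(x_0, v_0) := \left( x_0 - \int_0^{\infty} \tau\, E(\tau, X^{x_0,v_0}(\tau))\, d\tau,\ v_0 + \int_0^{\infty} E(\tau, X^{x_0,v_0}(\tau))\, d\tau \right),
\end{equation*}
where $X^{x_0, v_0}$ denotes the forward characteristic starting from $(x_0, v_0)$ at time $0$. Using the integrability in time of $\|\na E\|_{L^\infty}$ (which follows from the sharp decay of $\na_x \rho$ via the identity $\na E = -\na_x(1-\Delta_x)^{-1} \na_x \rho$), for $\eps_0$ small $\Psi$ is a small Lipschitz perturbation of the identity, and hence a global homeomorphism of $\R^d \times \R^d$; one then defines $(x + Y_\infty(x,v), v + W_\infty(x,v)) := \Psi^{-1}(x,v)$, with $\|Y_\infty\|_{L^\infty} + \|W_\infty\|_{L^\infty} \lesssim \eps_0$ inherited from the finite-time bounds. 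Equivalently, one could establish directly that $(Y(t), W(t))$ is Cauchy in $L^\infty_{x,v}$ by a contraction argument.

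Finally, writing
\begin{equation*}
f(t, x+tv, v) - f_\infty(x, v) = [f_0(x+Y(t), v+W(t)) - f_0(x+Y_\infty, v+W_\infty)] + [\mu(v+W(t)) - \mu(v+W_\infty)]
\end{equation*}
and using the $W^{1, \infty}$ regularity of $f_0$ and $\mu$, the error reduces to a quantitative Cauchy-tail bound on $|Y(t) - Y_\infty| + |W(t) - W_\infty|$, which combines the tail of the integrals defining $Y$ and $W$ with an ODE-stability estimate on the difference of characteristics; this delivers the announced rate $\eps_0 \log(2+t)/\langle t\rangle^{d-1}$. The main technical difficulty lies precisely in this last step: the characteristic $X(\cdot;t,x+tv,v)$ appearing in the formulas for $(Y(t), W(t))$ depends non-trivially on the terminal time $t$, so comparing it with the limiting scattering trajectory and closing the estimate with the sharp rate requires a careful bootstrap/perturbation argument that exploits the smallness of $\eps_0$ together with the (weighted) time-integrability of $\|\na E(\tau)\|_{L^\infty}$.
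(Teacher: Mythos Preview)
Your outline is correct and ultimately aligned with the paper's proof, though with a different emphasis. The paper does not go through the forward scattering map $\Psi$ and its inverse; instead it takes your ``equivalent'' route and shows directly that $(Y(t),W(t))$ is Cauchy in $L^\infty_{x,v}$. The key device is to work not just with $Y(t)=Y_{0,t}$ but with the whole family $\mathbb{Y}_t(s,x,v):=Y_{s,t}(x,v)\,\mathbf{1}_{0\le s\le t}$ (and similarly for $W$), because the integral equation $Y_{s,t}(x,v)=\int_s^t(\tau-s)E(\tau,x+\tau v+Y_{\tau,t}(x,v))\,d\tau$ couples all intermediate times. Subtracting the equations for two terminal times $t_1\ge t_2$ yields
\[
\|\mathbb{Y}_{t_1}-\mathbb{Y}_{t_2}\|_{C_b}\ \lesssim\ \int_{t_2}^{\infty}\tau\|E(\tau)\|_{L^\infty}\,d\tau\ +\ \Big(\int_0^\infty \tau\|\nabla E(\tau)\|_{L^\infty}\,d\tau\Big)\,\|\mathbb{Y}_{t_1}-\mathbb{Y}_{t_2}\|_{C_b},
\]
and for $\eps_0$ small the second term is absorbed, giving both the existence of $Y_\infty=\mathbb{Y}_\infty(0,\cdot)$ and the rate $\eps_0\log(2+t)/\langle t\rangle^{d-1}$ in one stroke. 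This is precisely the ``bootstrap/perturbation argument exploiting the smallness of $\eps_0$ and the time-integrability of $\|\nabla E\|_{L^\infty}$'' that you flag as the main difficulty, so you had the right idea.

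Two small remarks. First, the paper actually uses the sharper bound $\|E(\tau)\|_{L^\infty}\lesssim\|\nabla_x\rho(\tau)\|_{L^\infty}\lesssim\eps_0\log(2+\tau)/\langle\tau\rangle^{d+1}$ (writing $E=-(1-\Delta_x)^{-1}\nabla_x\rho$), which is what makes the tail $\int_{t}^\infty \tau\|E(\tau)\|_{L^\infty}d\tau$ decay like $\langle t\rangle^{-(d-1)}$; your weaker $\langle\tau\rangle^{-d}$ bound would give only $\langle t\rangle^{-(d-2)}$. Second, your forward-scattering route is valid but more circuitous: after inverting $\Psi$ you would still need to identify $\Psi^{-1}(x,v)$ with $\lim_{t\to\infty}(X(0;t,x+tv,v),V(0;t,x+tv,v))$ \emph{with the quantitative rate}, and that identification effectively requires the same Cauchy estimate on $\mathbb{Y}_t$ anyway.
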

 The proof which is an easy consequence of ~\eqref{eq-thm} will be  given in Section \ref{lastsection}. 
 \bigskip
 
 The remaining  of the paper is devoted to the proof of Theorem \ref{theomain} and is organized as follows. Linear estimates are derived in Section \ref{sec-lin}.
  This is the main ingredient in the proof: we shall establish in particular that the density  $\rho$ of  the solution of the   linearized 
   equation
$$
\left \{ 
\begin{aligned}
&\pa_t f + v\cdot \na_x f +  E \cdot \na_v \mu = 0 , \\
&E = -\nabla_x (1-  \Delta_x)^{-1} \rho, \qquad \rho(t,x)= \int_{\R^d} f (t,x,v)\, dv,\\
&f|_{t=0} = f_0
\end{aligned}
\right.
$$
    enjoys the same pointwise in time $L^p$ decay  as that of the  free transport equation (that corresponds to $\mu=0$), 
    up to a logarithmic correction. 
    
    The bootstrap argument allowing to get Theorem \ref{theomain}
 is introduced   in Section \ref{sec-nonlin}. Section \ref{sec-rechar} and \ref{sec-straightXV} are devoted to stability estimates for characteristics.  
In view of applying the linear estimates of Section~\ref{sec-lin}, several source terms need to be estimated.
The contribution from the initial data is studied in Section \ref{sec-bddata}, while the estimates on the terms due to the reaction term $-E\cdot \nabla_v \mu$ are established in Section \ref{sec-bdreaction}. 
  Note that in order to establish the higher decay of derivatives, we shall use a different, more straightforward change of variables
   than in \cite{Hwang}.
   The paper ends with a reminder of a few classical estimates related to the Littlewood-Paley decomposition.

\subsection{Notations}
 We use $\, \widehat \cdot \, $ for the "space" Fourier transform on $\mathbb{R}^d$ and $\, \widetilde  \cdot \, $ for the "space-time" Fourier transform on $\mathbb{R}^{d+1}$ with the convention:
\begin{align}
\label{fourier-space}
\widehat g (\xi) = \int_{\R^d} e^{- i x\cdot \xi} g(x) \; dx, 
\qquad \widetilde  h ( \tau, \xi) = \int_{\R}\int_{\R^d} e^{-i \tau t} e^{- i x\cdot \xi} h(t,x) \; dx dt .
\end{align}
Throughout the paper, functions depending on time are extended by zero for $t<0$. 

We shall use the homogeneous  Littlewood-Paley decomposition in 
$\mathbb{R}^n$ with $n=d$ or $d+1$. We write for $u \in \mathcal{S}'(\mathbb{R}^n)$, 
  $$ u = \sum_{q \in \mathbb{Z}} u_{q}$$
   where 
\begin{equation}
\label{defLP}
 \overline{ u_{q}}(\zeta) = \overline{u}(\zeta) \chi_{q}(\zeta), \quad \chi_{q}(\zeta)=  \chi({\zeta \over 2^q}), \qquad \overline{\cdot} \, = \, \widehat{\cdot} \text{ ~ or ~  } \widetilde{\cdot},
\end{equation}
    and $\chi \in [0,1]$ is a fixed smooth compactly supported function  in the annulus $ { 1 \over 4} \leq |\zeta | \leq 4$
      which is equal to one in the annulus  $ { 1 \over 2} \leq |\zeta | \leq 2$.
    The classical Bernstein Lemma is recalled in Lemma \ref{lemLp}.

\section{Linear estimates}\label{sec-lin}
In this section we study the linear equation
\begin{equation}\label{def-m}
\rho(t,x) = \int_0^t \int_{\R^d}  [\na_x (1 - \Delta_x)^{-1} \rho](s,x -(t-s)v) \cdot \na_v \mu(v) \, dv ds + S(t,x), \quad t \geq 0,
\end{equation}
with $S$ being a given source term. In what follows, we extend $\rho$ and $S$ by zero for $t<0 $ so that the equation \eqref{def-m}
is satisfied for $t \in \mathbb{R}$. The main result of this section is  the following. 
 
 \begin{thm}
 \label{thmvolterrasol}
 Assume that (H1) and (H2)
  are satisfied.  Then for all   $S \in L^1(\mathbb{R}, L^1(\mathbb{R}^d) \cap L^\infty(\mathbb{R}^d))$, there exists a unique 
 solution of \eqref{def-m} in $L^1_{loc}(\mathbb{R}, L^2(\mathbb{R}^d))$ that can be expressed in the following way:
 \begin{equation}
 \label{volterrasol} \rho= S + G*_{t,x} S,
 \end{equation}
 where the kernel $G(t,x)$ satisfies $G|_{t<0}=0$ and there exists $C>0$ such that the following uniform estimates hold:
 \begin{equation}
 \label{estimG}  \|G(t) \|_{L^1} \leq {  C \over 1 + t}, \qquad  \|G(t)\|_{L^\infty} \leq{ C \over t^{d-1 + \delta}+ t^{d+1}}, \quad \forall  t>0,
 \end{equation}
where $\delta \in (0, 1)$ can be chosen arbitrarily small. Furthermore, its spatial derivatives satisfy
 \begin{equation}
 \label{estimnablaG}  \|\nabla_x G(t) \|_{L^1} \leq {  C \over  t^2}, \qquad  \|\nabla_x G(t)\|_{L^\infty} \leq{ C \over t^{d+2}}, \quad \forall  t\geq 1.
 \end{equation}
  \end{thm}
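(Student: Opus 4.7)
The plan is to pass to the Fourier side. Extending $\rho$ and $S$ by zero for $t<0$, equation~\eqref{def-m} takes the form of a space-time convolution $\rho = K *_{t,x} \rho + S$, where
$$\widehat{K}(t,\xi) = \mathbb{1}_{t\geq 0}\,\frac{i\xi}{1+|\xi|^2}\cdot\widehat{\na_v\mu}(t\xi).$$
Its space-time Fourier transform $\widetilde{K}(\tau,\xi)$ is exactly the expression appearing in (H2) at $\gamma=0$. Hypothesis (H2), interpreted in the lower half-plane $\{\mathrm{Im}\,\tau\leq 0\}$, ensures that $|1-\widetilde{K}(\tau-i\gamma,\xi)|\geq \kappa$ for every $\gamma\geq 0$. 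This allows us to define the resolvent kernel via
$$\widetilde{G}(\tau,\xi) := \frac{\widetilde{K}(\tau,\xi)}{1-\widetilde{K}(\tau,\xi)},$$
yielding the representation $\rho = S + G*_{t,x}S$. Existence/uniqueness in $L^1_{\mathrm{loc}}(\R,L^2)$ follows by Plancherel, and causality $G|_{t<0}=0$ by a Paley--Wiener argument exploiting the analyticity of $\widetilde{G}(\tau-i\gamma,\xi)$ in $\gamma>0$.

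The heart of the proof is the pointwise estimates~\eqref{estimG}--\eqref{estimnablaG}. I will decompose $G$ via the homogeneous Littlewood--Paley projections $G = \sum_{q\in\Z} G_q$ and use the Fourier inversion
$$G_q(t,x) = \frac{1}{(2\pi)^{d+1}} \int_{\R}\int_{\R^d} e^{i(\tau t + x\cdot\xi)}\,\widetilde{G}(\tau,\xi)\,\chi_q(\xi)\,d\tau\,d\xi.$$
For the $L^\infty$ bound, I integrate by parts $N$ times in $\tau$ to gain the factor $t^{-N}$; each derivative $\partial_\tau\widetilde{G}$ is expressed as a rational combination of $\widetilde{K}$ and $\partial_\tau^j \widetilde{K}$ over powers of $1-\widetilde{K}$, the denominators being absorbed by the Penrose lower bound~$\kappa$. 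Since $\partial_\tau^j \widetilde{K}(\tau,\xi)$ is, up to a constant, the Fourier transform in $t$ of $t^j K(t,\xi)$, the weighted Sobolev regularity of $\mu$ in (H1) translates into integrability in $\tau$ uniformly in $\xi$. Summing the dyadic pieces using Bernstein's inequality (Lemma~\ref{lemLp}) gives the desired $L^\infty$ decay. The $L^1$ bound is obtained by a dual argument, with the improvement at low frequency coming from the explicit factor $|\xi|^2/(1+|\xi|^2)$ in $\widetilde{K}$, which kills the dangerous small $\xi$ behavior and yields the $(1+t)^{-1}$ rate. The gradient estimates~\eqref{estimnablaG} follow from the same scheme applied with an extra $\xi$ factor that allows one more integration by parts.

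The main technical obstacle lies in bookkeeping: one must carefully match the number of $\tau$-integrations by parts (which dictates the large-time decay exponent) against the regularity available on $\mu$ through (H1), while keeping the constants uniform in $\xi$. The split $t^{-(d-1+\delta)}$ vs.\ $t^{-(d+1)}$ in~\eqref{estimG} reflects the fact that for small $t$ no gain from $\tau$-integration by parts is possible and one relies directly on the pointwise behavior of $K(t,\cdot)$, whereas for $t\gtrsim 1$ the full number of integrations is available. Finally, the causality and the decay bounds must be reconciled: one may either work from the start on the Laplace transform in $\gamma>0$ and send $\gamma\to 0^+$, or argue by a limiting procedure with an approximation of $\mu$, and then pass to the limit using the uniform estimates just derived.
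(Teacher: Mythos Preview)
Your starting point is correct and matches the paper: pass to the Fourier side, define $\widetilde G = \widetilde K/(1-\widetilde K)$, invoke (H2) for the denominator, and get $G|_{t<0}=0$ by Paley--Wiener. The divergence begins when you attack the decay estimates.

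First, a factual error: there is no factor $|\xi|^2/(1+|\xi|^2)$ in $\widetilde K$. The actual structure, which is the key observation you are missing, is that $(1+|\xi|^2)\widetilde K(\tau,\xi)=\widetilde K^{h,1}(\tau,\xi)$ is \emph{homogeneous of degree zero jointly in $(\tau,\xi)$}. This is what dictates the natural scaling $\tau\sim|\xi|$ and explains why the paper uses a \emph{space-time} Littlewood--Paley decomposition $\chi_q(\tau,\xi)$, not the spatial one $\chi_q(\xi)$ you wrote. With spatial localisation alone, $\partial_\tau^N\widetilde K^{h,1}$ is homogeneous of degree $-N$ and hence blows up like $(|\tau|+|\xi|)^{-N}$; on the block $|\xi|\sim 2^q$ this is $\sim 2^{-qN}$ near $\tau=0$, and the bookkeeping you allude to becomes genuinely delicate and is not supplied.

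Second, for high frequencies the paper does \emph{not} estimate $\widetilde G$ directly. It returns to the convolution equation $G_q = K_q + \mathcal K_q *_{t,x} G_q$ and uses that $\|\mathcal K_q\|$ carries a factor $(1+2^{2q})^{-1}$, so for $2^q$ large the second term can be absorbed into the left-hand side. This absorption argument is the mechanism producing the high-frequency gain and is absent from your plan. For low frequencies one can indeed work directly on $\widetilde G$, but the control of derivatives in $(\tau,\xi)$ again rests on the homogeneity (via an expansion $\widetilde K = (1+|\xi|^2+\tau^2)^{-1}(\widetilde K^{h,1} - \mathscr P_2(\xi)/(1+|\xi|^2) - \ldots)$) rather than on $\tau$-integration by parts alone. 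In short, your sketch has the right Fourier setup but lacks the structural insight (joint homogeneity) and the two technical devices (space-time dyadic scaling, high-frequency absorption) that actually close the estimates.
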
 
 As a corollary of Theorem \ref{thmvolterrasol}, we immediately obtain decay estimates for the solution of \eqref{def-m}.
 \begin{coro}
 \label{corodecayL}  Assume that (H1) and (H2)
  are satisfied. Then, there exists $M>0$ such that for all   $S \in L^1(\mathbb{R}, L^1(\mathbb{R}^d) \cap L^\infty(\mathbb{R}^d))$, the solution of \eqref{def-m} satisfies the estimates
 $$ 
 \begin{aligned}  \|\rho(t) \|_{L^1}  + t^d \|\rho(t) \|_{L^\infty}  & \leq  M \log  (1+t)  \|S\|_{Y_{t}^0} ,
 \\
 t\| \nabla \rho(t) \|_{L^1} + t^{d+1}\|\nabla \rho(t) \|_{L^\infty} &\leq M  {\log (1+t)} \|S\|_{Y_{t}^1} , 
 \end{aligned}$$
for $t\ge 1$, where the norms $Y_t^0, Y_t^1$ are defined by \begin{equation}
\label{def-Y0Y1}
\begin{aligned}
  \|S\|_{Y^0_{t}} &= \sup_{[0, t]} \left( \| S(s) \|_{L^1} + (1+ s)^d \|S(s) \|_{L^\infty}\right),  \\
   \|S\|_{Y^1_{t}} &=  \sup_{[0, t]} \left( \| S(s) \|_{L^1}
   + (1+ s)  \|\nabla S(s) \|_{L^1} +  (1+ s)^{d + 1}  \| \nabla S(s) \|_{L^\infty}\right). 
\end{aligned}
\end{equation}


  
 \end{coro}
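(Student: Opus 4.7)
The plan is to insert the representation formula $\rho = S + G *_{t,x} S$ from Theorem \ref{thmvolterrasol} into Young's convolution inequalities and combine the kernel bounds \eqref{estimG}--\eqref{estimnablaG} with the pointwise in time decay assumptions contained in the $Y^0_t$ and $Y^1_t$ norms. The logarithmic loss will come from integrals of the form $\int_0^t (1+t-s)^{-1}(1+s)^{-\alpha}\,ds$.

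For the zeroth order bounds, write $\rho(t) = S(t) + \int_0^t G(t-s) *_x S(s)\,ds$. The $L^1$ bound uses Young with $p=1$:
\begin{equation*}
\|\rho(t)\|_{L^1} \le \|S(t)\|_{L^1} + \int_0^t \|G(t-s)\|_{L^1}\|S(s)\|_{L^1}\,ds \lesssim \|S\|_{Y^0_t}\Bigl(1 + \int_0^t \frac{ds}{1+t-s}\Bigr) \lesssim \|S\|_{Y^0_t}\log(1+t).
\end{equation*}
For the $L^\infty$ bound I split $[0,t]$ into $[0,t/2]$ and $[t/2,t]$. On $[0,t/2]$, $t-s\ge t/2\ge 1/2$ so $\|G(t-s)\|_{L^\infty}\lesssim (t-s)^{-(d+1)}$, and Young with $(\infty,\infty,1)$ gives a contribution $\lesssim t^{-d}\|S\|_{Y^0_t}$. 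On $[t/2,t]$, $s\ge t/2$ so $\|S(s)\|_{L^\infty}\lesssim t^{-d}\|S\|_{Y^0_t}$, and pairing with $\|G(t-s)\|_{L^1}\lesssim 1/(1+t-s)$ yields $\lesssim t^{-d}\log(1+t)\|S\|_{Y^0_t}$. Adding the trivial bound for $S(t)$ itself, I get $t^d\|\rho(t)\|_{L^\infty}\lesssim \log(1+t)\|S\|_{Y^0_t}$.

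For the gradient bounds, I put the derivative on $S$: $\nabla_x \rho(t) = \nabla_x S(t) + \int_0^t G(t-s)*_x \nabla_x S(s)\,ds$. For the $L^1$ estimate I use Young with $p=1$ and the decay $\|\nabla_x S(s)\|_{L^1}\lesssim \|S\|_{Y^1_t}/(1+s)$, so
\begin{equation*}
\|\nabla_x \rho(t)\|_{L^1} \lesssim \frac{\|S\|_{Y^1_t}}{1+t} + \|S\|_{Y^1_t}\int_0^t \frac{ds}{(1+t-s)(1+s)} \lesssim \frac{\log(1+t)}{1+t}\|S\|_{Y^1_t},
\end{equation*}
the last integral being computed explicitly by partial fractions. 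For the $L^\infty$ estimate I again split $[0,t]$ in half: on $[0,t/2]$ I use $\|G(t-s)\|_{L^\infty}\lesssim (t-s)^{-(d+1)}$ against $\|\nabla_x S(s)\|_{L^1}\lesssim \|S\|_{Y^1_t}/(1+s)$, producing $t^{-(d+1)}\log(1+t)\|S\|_{Y^1_t}$; on $[t/2,t]$ I use $\|\nabla_x S(s)\|_{L^\infty}\lesssim \|S\|_{Y^1_t}/(1+s)^{d+1}\lesssim \|S\|_{Y^1_t}/t^{d+1}$ against $\|G(t-s)\|_{L^1}\lesssim 1/(1+t-s)$, which gives the same bound.

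There is no genuine obstacle; the only mild care needed is the choice of where to place the spatial derivative (on $S$ rather than $G$, so that the $L^1$ bound for $G$, valid for all $t\ge 0$, can be used and the slightly restrictive $t\ge 1$ range for $\|\nabla_x G\|$ in \eqref{estimnablaG} is avoided), and the splitting of the time integral at $t/2$ to match each piece with the right endpoint of Young's inequality. All constants are absorbed into $M$.
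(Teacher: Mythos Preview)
Your proof is correct and follows essentially the same strategy as the paper: insert the representation $\rho=S+G*_{t,x}S$, apply Young's inequality, and split the time integral at $t/2$ so as to pair the appropriate kernel bound with the appropriate decay of $S$. The one tactical difference is in the gradient estimates: the paper places $\nabla_x$ on $G$ on the interval $[0,t/2]$ (using \eqref{estimnablaG}) and on $S$ on $[t/2,t]$, whereas you place $\nabla_x$ on $S$ throughout and never invoke \eqref{estimnablaG}. Your variant is slightly more economical here, since the decay $\|\nabla_x S(s)\|_{L^1}\lesssim (1+s)^{-1}\|S\|_{Y^1_t}$ already suffices to make $\int_0^t (1+t-s)^{-1}(1+s)^{-1}\,ds\lesssim (1+t)^{-1}\log(1+t)$ work without splitting; the paper's choice, on the other hand, makes clearer that the faster decay of $\nabla_x G$ is available and would be needed if $\|S\|_{Y^1_t}$ did not carry the extra $(1+s)$ weight on $\nabla S$.
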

 
 In Corollary \ref{corodecayL}, we state only large time estimates, since the estimates for $t \leq 1$ can be obtained in a straightforward way.
Note that derivatives decay at a $t^{-1}$ faster rate. In particular, Corollary \ref{corodecayL} immediately yields decay estimates for the linearized Vlasov-Poisson system of \eqref{VP} around $\mu(v)$, namely for
the system
 \begin{equation}
\label{VPL}
\left \{ 
\begin{aligned}
&\pa_t f + v\cdot \na_x f +  E \cdot \na_v \mu = 0 , \\
&E = -\nabla_x (1-  \Delta_x)^{-1} \rho, \qquad \rho(t,x)= \int_{\R^d} f (t,x,v)\, dv,\\
&f|_{t=0} = f_0.
\end{aligned}
\right.
\end{equation}
Indeed, using the method of characteristics, we obtain that $\rho$ solves \eqref{def-m} with
 $S(t,x)$ given by 
  $$ S(t,x)= \int_{\R^d} f_{0}(x-tv, v)\, dv. $$
  Assuming that $f_{0} \in L^1_{x,v}$ and $f_{0} \in L^1_{x}(L^\infty_{v})$, we have from the standard dispersive estimates
   for  free transport (see \cite{Bardos-Degond}) that
  $$ \|S(t) \|_{L^1}\leq \|f_{0}\|_{L^1}, \quad \|S(t) \|_{L^\infty} \leq { 1 \over t^d} \|f_{0}\|_{L^1_{x}L^\infty_{v}}.$$
  Similar estimates hold for derivatives:
  $$ \|\na S(t) \|_{L^1}\leq { 1 \over t^d}  \| \na_v f_{0}\|_{L^1}, \quad \|\na S(t) \|_{L^\infty} \leq { 1 \over t^{d+1}} \| \na_v f_{0}\|_{L^1_{x}L^\infty_{v}}.$$
  Therefore, we obtain from Corollary \ref{corodecayL}  that the pointwise behavior of the density of the linearized
  equation \eqref{VPL} is the same as the one of the free transport, up to a logarithmic loss. 
   
  Let us right away provide  the proof of Corollary \ref{corodecayL}.
  
  \begin{proof}[Proof of Corollary \ref{corodecayL}]
Using the representation \eqref{volterrasol} and the fact that $S$ and $G$ vanish for negative times
  we obtain
  $$ \rho(t)= S(t) + \int_{0}^t G(t-s) *_{x}  S(s) \, ds.$$
  Therefore, using \eqref{estimG}, we obtain
 $$ \begin{aligned} \|\rho(t)\|_{L^1} &\lesssim \|S(t) \|_{L^1} + \int_{0}^t { 1 \over 1 + (t-s)} \|S(s)\|_{L^1}\, ds
  \\  &\lesssim  \|S(t) \|_{L^1} +  { 1 \over 1+ t } \int_{0}^{t \over 2} \|S(s) \|_{L^1}\, ds +  \int_{t \over 2}^t  { 1 \over 1 + (t-s)}  \|S(s)\|_{L^1}\, ds
  \\  &  \lesssim  \left(1 + \int_{0}^{ t \over 2} { 1 \over 1 + s} \, ds\right) \sup_{[0, t]} \|S(s)\|_{L^1}.
      \end{aligned}$$
  In a similar way, we have 
 $$ \| \rho(t) \|_{L^\infty} \leq \|S(t) \|_{L^\infty} + \int_{0}^{t\over 2} \|G(t-s) \|_{L^\infty} \|S(s) \|_{L^1}\, ds
  + \int_{{t \over 2}}^t \|G(t-s) \|_{L^1} \|S(s) \|_{L^\infty} \, ds.$$
Therefore, using again \eqref{estimG}, we get that
 $$\begin{aligned}  \| \rho(t) \|_{L^\infty} 
 &\leq \|S(t) \|_{L^\infty}  + {1 \over t^{d + 1}}\int_{0}^{t \over 2}  \|S(s) \|_{L^1}\, ds
    + \int_{t \over 2}^t { 1 \over 1 + t-s} \|S(s) \|_{L^\infty} \, ds \\
   &  \lesssim  \|S(t) \|_{L^\infty} + {1 \over t^d} \sup_{[0, t/2]} \|S(s)\|_{L^1}
      + {1 \over t^d} \sup_{[t/2, t]} (1+s^d)\|S(s)\|_{L^\infty} \int_{0}^{ t \over 2}  { 1 \over 1 + s} \, ds
     \\&\lesssim t^{-d} \log  (1+t)  \|S\|_{Y_{t}^0} ,
    \end{aligned} $$
upon recalling the notation \eqref{def-Y0Y1}. This proves the desired estimates for $\rho(t)$. Similarly, we compute
   $$ t\|\nabla \rho(t)\|_{L^p} \lesssim t\|\nabla S(t) \|_{L^p} +  t  \int_{0}^{t \over 2} \|\nabla G(t-s) \|_{L^p} \|S(s) \|_{L^1}\, ds + t  \int_{t \over 2}^t  \|G(t-s)\|_{L^1} \|\nabla S(s)\|_{L^p}\;ds$$
   for $p =1$ and $p=\infty$.  
By using \eqref{estimG} and \eqref{estimnablaG}, the estimates for derivatives follow.  
%
  \end{proof}
  
 Before giving the proof of  Theorem \ref{thmvolterrasol} it will be useful to establish some properties
  of the kernel of the  integral equation \eqref{def-m}.
  
   Let us  set  $ w= x- (t-s)v$ and  then  integrate by parts to get the equivalent formulation
  \begin{equation}
  \label{volterrabis}
  \rho(t,x) =  \int_0^t \int_{\R^d}  \rho(s,w)  \left( \left( 1 - {1 \over (t-s)^2} \Delta_{v} \right)^{-1} \Delta_v \mu \right)\left(
   {x-w \over t-s}\right) { 1 \over (t-s)^{d+1} }\, dw ds + S(t,x), \quad t \geq 0.
 \end{equation}
 Since  $\rho$ and $S$ by zero for $t<0 $,  the equation \eqref{def-m}
  is satisfied for $t \in \mathbb{R}$ and can be  rewriten  as the convolution equation
  \begin{equation}
  \label{lin2}
  \rho(t,x) = ( K *_{t,x} \rho) (t,x) + S(t,x), \quad t \in \mathbb{R},
  \end{equation}
  where the kernel $K$ is given by 
  $$ K(t,x)=  { 1 \over t^{d+1}} \left( \left( 1 - {1 \over t^2} \Delta_{v} \right)^{-1} \Delta_v \mu \right)\left(
   {x \over t}\right) \mathrm{1}_{t>0}.$$
   Note that we have
   $$ ( K *_{t,x} \rho) (t,x)= \int_{\R}\int_{\mathbb{R}^{d}} K(t-s, x-w) \rho(s, w) dw ds
   = \int_{0}^t \left(K(t-s, \cdot) *_{x} \rho(s, \cdot)\right)(x) \, ds,$$
   where we use the notation $*_{t,x}$ for the space-time convolution and $*_{x}$ for the space convolution.
   
   We have the following properties for the kernel $K$:
   \begin{lem}
   \label{lemK}
   Assuming (H1),  there exists $C>0$ such that the following estimates hold:
    $$ \|K(t) \|_{L^1} \leq {  C \over 1 + t}, \quad  \|K(t)\|_{L^\infty} \leq{ C \over t^{d}(1+ t)}, \quad \forall  t>0.$$
    \end{lem}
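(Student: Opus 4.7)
The plan is to exploit the scaling structure of $K$ together with a simple algebraic identity on the resolvent $(1-t^{-2}\Delta_v)^{-1}$. First, I would perform the change of variables $w = x/t$ in the definition of $K$ to obtain
\[
\|K(t)\|_{L^1} = \frac{1}{t}\,\|g_t\|_{L^1},\qquad \|K(t)\|_{L^\infty} = \frac{1}{t^{d+1}}\,\|g_t\|_{L^\infty},
\]
where $g_t := (1 - t^{-2}\Delta_v)^{-1}\Delta_v\mu$. The estimates thus reduce to controlling $g_t$ in $L^1$ and $L^\infty$, treating the regimes $t \leq 1$ and $t \geq 1$ separately.

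For large $t$, the key observation is that the resolvent $(1-\alpha^2\Delta)^{-1}$ is convolution with a positive Bessel--Yukawa kernel whose total mass equals $1$, as is immediate from the value at $\xi=0$ of its Fourier multiplier $(1+\alpha^2|\xi|^2)^{-1}$. By Young's inequality it therefore acts as a contraction on every $L^p(\R^d)$. Applied to $g_t$, this gives $\|g_t\|_{L^p} \leq \|\Delta_v\mu\|_{L^p}$, while assumption (H1) ensures $\Delta_v\mu \in L^1 \cap L^\infty$ (its second derivatives being dominated by $\langle v\rangle^{-k}$ with $k>d$). Hence $\|K(t)\|_{L^1} \lesssim 1/t$ and $\|K(t)\|_{L^\infty} \lesssim 1/t^{d+1}$, which match the stated bounds in the regime $t \geq 1$.

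For small $t$ the previous estimate blows up, so I would compensate by trading the Laplacian for a factor $t^2$ via the elementary identity
\[
(1-t^{-2}\Delta_v)^{-1}\Delta_v = t^2\bigl[(1-t^{-2}\Delta_v)^{-1} - 1\bigr],
\]
which, combined with the $L^p$-contractivity, yields $\|g_t\|_{L^p} \leq 2t^2\|\mu\|_{L^p}$. This produces $\|K(t)\|_{L^1} \lesssim t$ and $\|K(t)\|_{L^\infty} \lesssim t^{1-d}$; for $t\leq 1$ the latter is dominated by $t^{-d}$. Gluing the two regimes then delivers the stated bounds $\|K(t)\|_{L^1} \lesssim (1+t)^{-1}$ and $\|K(t)\|_{L^\infty} \lesssim (t^d(1+t))^{-1}$.

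No step looks genuinely delicate: the main point to verify is the $L^p$-contractivity of the resolvent, which follows from Young's inequality once one notices that the Yukawa convolution kernel is a positive probability density. The resolvent identity handles the short-time singularity, and (H1) supplies the required pointwise and integrable bounds on $\mu$ and its second derivatives.
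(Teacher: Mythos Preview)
Your argument is correct and is genuinely different from the paper's. Both proofs begin with the same scaling reduction to the function $F_t=g_t=(1-t^{-2}\Delta_v)^{-1}\Delta_v\mu$, but then diverge. The paper estimates $F_t$ via the homogeneous Littlewood--Paley decomposition and Bernstein's inequality, summing $\frac{2^{2q}}{1+2^{2q}/t^2}\|\mu_q\|_{L^p}$ over $q$ and splitting according to the sign of $q$; this requires $\mu\in W^{3,p}$ for the large-$t$ regime and $\mu\in W^{1,p}$ for the small-$t$ regime. You instead use two soft facts: the Bessel/Yukawa kernel of $(1-\alpha^2\Delta)^{-1}$ is a positive probability density (hence an $L^p$-contraction by Young), which immediately gives $\|g_t\|_{L^p}\le\|\Delta_v\mu\|_{L^p}$ for large $t$; and the algebraic identity $(1-t^{-2}\Delta_v)^{-1}\Delta_v=t^2[(1-t^{-2}\Delta_v)^{-1}-1]$ for small $t$, which together with the same contractivity gives $\|g_t\|_{L^p}\le 2t^2\|\mu\|_{L^p}$. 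Your route is shorter, avoids Littlewood--Paley machinery entirely, uses less regularity on $\mu$ (only $\mu,\Delta_v\mu\in L^1\cap L^\infty$, which (H1) indeed supplies since $|\nabla_v^2\mu|\lesssim\langle v\rangle^{-k}$), and even yields a slightly sharper small-time bound ($t^2$ rather than $t$). The paper's approach, on the other hand, fits naturally with the frequency-localized analysis used elsewhere in Section~\ref{sec-lin}.
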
 
Note that we get in particular  from this lemma that $K \in L^1_{loc}(\mathbb{R}, L^1(\mathbb{R}^d))$ so that its Fourier
transform on $\mathbb{R}^{d+1}$ is well defined (at least as a tempered distribution).
We shall not use explicitly these precise properties of $K$ besides the fact that its Fourier transform makes sense. 
  %
  %
  \begin{proof}
  We clearly get that
$$
\| K(t, \cdot) \|_{L^\infty} \leq { C \over  t^{d+1}} \| F_{t} \|_{L^\infty},
$$
  and by change of variables, that
   $$ \| K(t, \cdot) \|_{L^1} \leq { C \over  t} \| F_{t} \|_{L^1},$$
   where 
 $$   F_{t} (v)= \left( \left( 1 - {1 \over t^2} \Delta_{v} \right)^{-1} \Delta_v \mu  \right) (v),$$
 so that it only remains to estimate the $L^1$ and $L^\infty$ norms of $F_{t}.$
    We use the homogeneous Littlewood-Paley decomposition and the Bernstein inequality~\eqref{Bernstein1} to get that for every
     $p \in [1, +\infty]$, 
  $$
   \| F_{t}\|_{L^p} \lesssim \sum_{q\in \mathbb{Z}} {  2^{2q} \over 1 +  {2^{2 q} \over t^2}} \|\mu_{q} \|_{L^p}.$$
   We can thus obtain from  that uniformly for $t >0$,
  $$  \| F_{t}\|_{L^p} 
   \lesssim \sum_{q \leq 0}  2^{2q} \|\mu_{q} \|_{L^p}  + \sum_{q \geq 0}  2^{-q} \|\mu_{q}\|_{W^{3,p}}
    \lesssim  \|\mu \|_{W^{3,p}}.
    $$
    This yields the estimates for $t \geq 1$.
     It remains to improve the estimates  for $t \in (0, 1].$
   This time,  we write
  $$     \| F_{t}\|_{L^p} \lesssim  \sum_{q \leq 0}  t 2^{q} \|\mu_{q} \|_{L^p}
   +  \sum_{q \geq 0}   t^2 2^{-q} \|\mu_{q}\|_{W^{1,p}} \lesssim t \|\mu\|_{W^{1,p}},
   $$
hence concluding the proof.
     
  \end{proof}

  \subsection*{Proof of Theorem \ref{thmvolterrasol}}
    We now give the proof of Theorem \ref{thmvolterrasol} which we shall split in several steps.
  As already justified in \cite{MV,BMM2}, 
  we can express the solution
   of \eqref{def-m} through its space-time Fourier transform by 
   $$ \widetilde {\rho}(\tau,\xi) = { 1 \over 1-  \widetilde {K}(\tau, \xi) } \widetilde {S}(\tau, \xi)$$
in which $\widetilde {K}(\tau, \xi)$ is given by
  \begin{equation}
  \label{Ktildedef} \widetilde  K (\tau, \xi)=    \int_{0}^{+\infty} e^{-i \tau t} {  i  \xi \over 1 + |\xi|^2} \cdot \widehat{\nabla_{v} \mu}
  (t \xi) \, dt.
  \end{equation}  
    The fact that the inverse Fourier transform of  ${ 1 \over 1-  \widetilde{K}(\tau, \xi)}  \widetilde {S}(\tau, \xi)$
     vanishes for $t<0$ comes from a Paley-Wiener type argument and uses the fact that
      $  1-  \widetilde {K}(z, \xi)$ does not cancel in the half-plane $\Im\, z \leq 0$, which is precisely the Penrose stability condition (H2). 
      
      We can then write 
   $$   \widetilde {\rho}(\tau,\xi) =  \widetilde {S}(\tau, \xi)  + { \widetilde  K(\tau, \xi) \over 1-  \widetilde {K}(\tau, \xi) } \widetilde {S}(\tau, \xi)$$
   and hence the expression \eqref{volterrasol} follows by setting
    \begin{equation}
    \label{Gdef} G(t,x)= \mathcal{F}^{-1}_{(\tau, \xi)\rightarrow (t,x)} \Big({ \widetilde  K(\tau, \xi) \over 1-  \widetilde {K}(\tau, \xi) } \Big).
    \end{equation}
It thus remains to check the claimed properties of $G$.
 
 At first, we observe that for $\xi \neq 0$,  ${\widetilde  K( z, \xi) \over 1 -  \widetilde  K(z,\xi)}$ is a holomorphic function in $\Im \, z < 0$ thanks to (H1) and (H2):
 \begin{itemize}
 \item the Penrose condition (H2) ensures that $ 1 -  \widetilde  K(z,\xi)$ is away from $0$,
 \item and (H1) entails that  $ \widehat{\nabla_{v} \mu}$ and its derivatives are  decreasing sufficiently fast, so that one can apply Lebesgue dominated convergence theorem.
 \end{itemize}

Now, note that thanks to (H1) and (H2),  ${\widetilde  K( z, \xi) \over 1 -  \widetilde  K(z,\xi)}$ is uniformly bounded 
    in  $\Im \, z \leq  0$.  Indeed, by (H1), $ {\nabla_{v} \mu} \in W^{2,1}$ and therefore we get 
   \begin{equation}
   \label{Kborne1}
   \forall \tau \in \R, \gamma\geq0, \quad |\widetilde {K}(\tau-i \gamma, \xi)|
     \lesssim  \int_{0}^{+ \infty} e^{-\gamma t} {| \xi| \over (1 + t |\xi|)^2} \, dt \lesssim  
      \int_{0}^{+ \infty} {ds \over (1 + s)^2}.
      \end{equation}
In addition, by integration by parts, we can compute 
   \begin{align*}
   \widetilde  K (\tau, \xi) &=    \int_{0}^{+\infty} \frac{(1-\pa_t^2)(e^{-i \tau t}) }{1+\tau^2} {  i  \xi \over 1 + |\xi|^2} \cdot \widehat{\nabla_{v} \mu}
  (t \xi) \, dt \\
  &=   \int_{0}^{+\infty} \frac{e^{-i \tau t} }{1+\tau^2}{  i  \xi \over 1 + |\xi|^2} \cdot (1-\pa_t^2) \widehat{\nabla_{v} \mu}
  (t \xi) \, dt  - \frac{1}{1+\tau^2}{  i \xi \over 1 + |\xi|^2} \xi \cdot \na_\xi [\widehat{\nabla_{v} \mu}](0).
   \end{align*}
   Note that we have used that  $\widehat{\nabla_{v} \mu}(0)=0$, which ensures that a boundary term cancel. By (H1), $\langle v\rangle^2 \na_v \mu \in W^{2,1}$.
    This yields, arguing similarly as for~\eqref{Kborne1}, for any fixed $\xi \neq 0$,
   \begin{equation*}
| \widetilde {K}(\tau, \xi)| \lesssim { 1 \over 1 + \tau^2}.
\end{equation*}
    Since a similar expression holds for the derivatives $\partial_{\tau}^\alpha  \widetilde {K}(\tau, \xi)$ and by (H1), $\langle v\rangle^2 \na_v \mu \in W^{4,1}$, for any fixed $\xi \neq 0$, we have 
  \begin{equation}
  \label{eq-dalphaK}
|\partial_{\tau}^\alpha  \widetilde {K}(\tau, \xi)| \lesssim { 1 \over 1 + \tau^2},  \quad \alpha  \leq 2,
\end{equation}
uniformly in $\tau$. 
   
%
%

    This entails in particular that ${\widetilde {K}(\tau, \xi) \over 1- \widetilde  {K}(\tau ,\xi)}$ and its inverse Fourier transform in time
    have moderate decrease.  Therefore, thanks to an adequate version of  the Paley-Wiener theorem
    (see Theorem 3.5 in \cite{Stein}), we get that
    $ \mathcal{F}^{-1}_{\tau\rightarrow t}\left( \frac{\widetilde {K}}{1-\widetilde{K}}\right)$ vanishes for $t <0$ for every $\xi$. We conclude that $G$ vanishes
     for $t<0$.
     
     \bigskip
     
   It remains to prove the pointwise decay estimates \eqref{estimG}-\eqref{estimnablaG}. 
We need the following properties of $\widetilde {K}(\tau, \xi)$.
  \begin{lem}
  \label{lemhomogene}
    We can write 
      \begin{equation}
     \label{K1def} \widetilde {K}(\tau, \xi)=    { 1 \over 1 + |\xi |^2} \widetilde  K^{h,1}(\tau, \xi)
     \end{equation}
     where $ \widetilde  K^{h,1}(\tau, \xi)$ is positively homogeneous of degree zero and $\widetilde  K^{h, 1} \in \mathscr{C}^{2d+3}(\mathbb{R}^{d+1} \backslash\{0\})$.
   Moreover, there exists $C>0$ such that 
   \begin{equation}
   \label{estKh1}| \partial_{\tau}^\alpha \partial_{\xi}^\beta \widetilde  K^{h,1}(\tau, \xi) |
    \leq C, \qquad \forall (\alpha, \beta), \, | \alpha| + |\beta | \leq  2 d+ 3,\quad \, \forall (\tau, \xi) \in \mathbb{S}^d,
    \end{equation}
     where $\mathbb{S}^d$ is the unit sphere of $\mathbb{R}^{d+1}$.  
  \end{lem}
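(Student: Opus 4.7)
The plan is to read the factorization \eqref{K1def} straight off \eqref{Ktildedef} by pulling out $1/(1+|\xi|^2)$, namely
\[
\widetilde K^{h,1}(\tau,\xi) := \int_0^{+\infty} e^{-i\tau t}\, i\xi\cdot\widehat{\nabla_v\mu}(t\xi)\,dt.
\]
Positive homogeneity of degree $0$ then follows from the substitution $s=\lambda t$: the Jacobian $dt=ds/\lambda$ cancels the $\lambda$ that scales the linear $\xi$ factor, while the phases $\tau t$ and arguments $t\xi$ scale consistently. It will also be useful to rewrite this, via $\widehat{\nabla_v\mu}(\eta)=i\eta\hat\mu(\eta)$, as $\widetilde K^{h,1}(\tau,\xi)=-|\xi|^2\int_0^{+\infty} t e^{-i\tau t}\hat\mu(t\xi)\,dt$, a form convenient near $\xi=0$.

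To prove the regularity and the bound \eqref{estKh1}, I would differentiate under the integral sign. Assumption (H1) provides enough integrability to justify this on compact subsets of $\mathbb R^{d+1}\setminus\{0\}$. After expanding with the Leibniz rule, $\partial_\tau^\alpha\partial_\xi^\beta\widetilde K^{h,1}$ becomes, for $|\alpha|+|\beta|\le 2d+3$, a finite sum of expressions
\[
P(\xi)\int_0^{+\infty} t^j e^{-i\tau t}\,(\partial^\gamma\widehat{\nabla_v\mu})(t\xi)\,dt,
\]
with $j\le 2d+3$, $|\gamma|\le 2d+3$, and $P$ a polynomial of degree at most one (coming from the fact that the original integrand is linear in $\xi$). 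The task is then to bound each such integral uniformly on $\mathbb S^d$.

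On the region $\{|\xi|\ge 1/2\}\cap\mathbb S^d$, the natural substitution $s=t|\xi|$ converts the integral into $|\xi|^{-j-1}\int_0^{+\infty} s^j e^{-i(\tau/|\xi|)s}(\partial^\gamma\widehat{\nabla_v\mu})(s\xi/|\xi|)\,ds$, whose prefactor is uniformly bounded. The integral in $s$ converges uniformly because the weight $\langle v\rangle^{4d+6}$ and the $W^{2d+5,1}$ regularity in (H1) yield, by a standard Fourier argument, $(1+|\eta|)^{2d+5}\partial^\gamma\widehat{\nabla_v\mu}(\eta)\in L^\infty$ for $|\gamma|\le 4d+6$, and this decay in $s$ beats $s^j\le s^{2d+3}$ with room to spare.

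The main obstacle is the region $\{|\xi|<1/2\}\cap\mathbb S^d$, where the above substitution is singular but, crucially, $|\tau|\ge\sqrt{3}/2$. Here I would exploit the oscillation of $e^{-i\tau t}$ by repeatedly integrating by parts in $t$ via $e^{-i\tau t}=-(i\tau)^{-1}\partial_t e^{-i\tau t}$. All boundary terms at $t=0$ vanish because $\widehat{\nabla_v\mu}(0)=0$ (equivalently, the overall $|\xi|^2$ prefactor in the rewritten form) together with the $t^j$ factor; boundary terms at $t=+\infty$ vanish by the decay from (H1). Each step either lowers the power $t^j$ by one (at the cost of $|\tau|^{-1}$, which is bounded) or converts $\widehat{\nabla_v\mu}(t\xi)$ into $\xi\cdot\nabla\widehat{\nabla_v\mu}(t\xi)$, producing a factor of $\xi$ of size $\le|\xi|$. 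After eliminating all powers of $t$ and then continuing further, one accumulates arbitrary powers $|\xi|^N$ that absorb any blow-up as $|\xi|\to 0$, while the remaining integrand stays integrable thanks to the higher $\xi$-derivatives of $\widehat{\nabla_v\mu}$ controlled by (H1). The principal bookkeeping task is to verify that the total number of $\xi$-derivatives of $\widehat{\nabla_v\mu}$ invoked throughout this procedure stays within the $4d+6$ and $2d+5$ budget supplied by (H1).
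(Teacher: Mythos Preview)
Your approach is essentially the same as the paper's: define $\widetilde K^{h,1}$ by pulling out $1/(1+|\xi|^2)$, check homogeneity by rescaling $t$, and on the sphere split into $|\xi|\ge 1/2$ (decay of $\widehat{\nabla_v\mu}$ from (H1)) and $|\xi|<1/2$ (use $|\tau|\ge \sqrt3/2$ and integrate by parts in $t$). The only structural difference is the order of operations in the small-$\xi$ region: the paper integrates by parts in $t$ \emph{first}, obtaining the explicit expansion
\[
\widetilde K^{h,1}(\tau,\xi)=\sum_{k=2}^{n}\frac{\mathscr P_k(\xi)}{(i\tau)^k}+\frac{1}{(i\tau)^n}R_n(\tau,\xi),
\]
and only then applies $\partial_\tau^\alpha\partial_\xi^\beta$; you differentiate first and then integrate by parts.

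One point needs correction. Your assertion that ``all boundary terms at $t=0$ vanish'' is not right beyond the first integration by parts. The first boundary term does vanish (either because $j\ge 1$ or, when $j=0$, because necessarily $\gamma=0$ and $\widehat{\nabla_v\mu}(0)=0$), but once you have passed to $(\xi\cdot\nabla)\partial^\gamma\widehat{\nabla_v\mu}$ the value at the origin is generically nonzero. These nonvanishing boundary terms are exactly the $\mathscr P_k(\xi)/(i\tau)^k$ of the paper's expansion; they are harmless on $\{|\xi|\le 1/2,\ |\tau|\ge\sqrt3/2\}$ since $|\mathscr P_k(\xi)|/|\tau|^k\lesssim 1$, so your argument is easily repaired by bounding them directly rather than claiming they vanish. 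With that fix, the bookkeeping you allude to matches the paper's: at most $2d+3$ $\xi$-derivatives from $\partial_\xi^\beta$ plus at most $2d+3$ more from the integrations by parts, staying within the $4d+6$ moment budget, while the required decay exponent $2d+5$ matches the smoothness in (H1).
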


  \begin{proof}
 From the definition \eqref{Ktildedef}, we have 
  $$  \widetilde  K^{h,1}(\tau, \xi)=  \int_{0}^{+\infty} e^{-i \tau t}   i  \xi \cdot \widehat{\nabla_{v} \mu}
  (t \xi) \, dt .$$
 We observe that for $\lambda >0$,
    $$\widetilde  K^{h,1}(\lambda \tau, \lambda \xi)= \int_{0}^{+ \infty} e^{- i \lambda \tau t} i \lambda \xi \cdot \widehat{\nabla_{v} \mu}
    (t \lambda \xi)\, dt= \widetilde  K^{h,1}(\tau, \xi),$$
upon using the change of variable $s= \lambda t$.  Note that by using again \eqref{Kborne1}, we have
         $$ \sup_{(\tau, \xi) \in \mathbb{R}^{d+1}} |\widetilde  K^{h,1}(\tau, \xi)|<+\infty.$$
  To estimate the derivatives on the sphere, we first handle the case when $|\xi| \geq {1 \over 2}.$ 
Thanks to (H1), $\langle v \rangle^{2d+3} \na_v \mu \in W^{2d+5,1}$ and consequently, we have
   $$ | \partial_{\tau}^\alpha \partial_{\xi}^\beta \widetilde  K^{h,1}(\tau, \xi) | \lesssim 
     \int_{0}^{+ \infty}    { \langle t \rangle^{|\alpha| + | \beta|} \over (1 + t|\xi|)^N}\, dt$$
for $N=2d+5$,
     and therefore, for $|\xi| \geq {1\over 2}$ and $| \alpha| + |\beta | \leq  2 d+ 3$, we have
     \begin{equation}\label{bd-X1}   | \partial_{\tau}^\alpha \partial_{\xi}^\beta \widetilde  K^{h,1}(\tau, \xi) | \lesssim  1, \qquad  (\tau, \xi) \in \mathbb{S}^d, \, 
\quad      | \xi | \geq { 1 \over 2}.\end{equation}

Let us next consider the case when $|\xi| \leq { 1 \over 2}$, in which we make use of the fact that $\tau$ is bounded below away from zero, recalling $(\tau, \xi) \in \mathbb{S}^d$. Integrating by parts, we get for every $n \geq 2$
   \begin{equation}
   \label{Kxismall}
    \widetilde {K}^{h,1}(\tau, \xi)=  \sum_{k= 2}^n
     { 1 \over (i \tau)^k}  \mathscr{P}_{k}(\xi)   +   { 1 \over (i\tau)^n} R_{n}(\tau, \xi)
    \end{equation} 
     where
     $$\begin{aligned} \mathscr{P}_{k}(\xi)&= i \xi \cdot (D_{\xi}^{k-1} \widehat{\nabla_{v} \mu})(0): \xi^{\otimes k-1}, \\
       R_{n}(\tau, \xi) &= \int_{0}^{+ \infty} e^{-i \tau t} r_{n}(t,\xi)\, dt, \quad r_{n}(t,\xi)=
       i \xi\cdot (D_{\xi}^n \widehat{\nabla_{v} \mu}) (t\xi)):\xi^{\otimes n},
       \end{aligned}
       $$
        with the definition
  $$ \xi \cdot (D_{\xi}^k \widehat{\nabla_{v} \mu})(\zeta): \xi^{\otimes k}= \sum_{j_{0}, j_{1}, \cdots j_{k}}
  \xi_{j_{0}}  \xi_{j_{1}} \cdots \xi_{j_{k}}{\partial^k \over
    \partial_{\xi_{j_{1}} } \cdots \partial_{\xi_{j_{k}}}   } \widehat{\partial_{v_{j_{0}}} \mu}(\zeta) .$$
     Note that $\mathscr{P}_{k}$ is a homogeneous polynomial of degree $k$.  Thanks to (H1), $\langle v\rangle^{2d+3} \na_v \mu \in W^{2d+5,1}$  and thus we have for all $n \leq 2d+3$,
   $$  | r_{n}(t, \xi) |
    \lesssim { |\xi|^{n+1} \over (1 + t | \xi |)^N},$$
for $N=2d+5$. More generally, using $\langle v\rangle^{4d+6} \na_v \mu \in W^{2d+5,1}$, we have for all $n \leq 2d+3$ and $|\beta|\le 2d+3$, 
    $$  |\partial_{\xi}^\beta  r_{n}(t, \xi) | \leq { |\xi|^{n+1- |\beta |} \over (1 + t |\xi|)^{N-|\beta|}}.$$
   Consequently, applying derivatives  to the expansion  \eqref{Kxismall}, we get for $|\xi| \leq { 1 \over 2}$, 
    $$  | \partial_{\tau}^\alpha \partial_{\xi}^\beta \widetilde  K^{h,1}(\tau, \xi) | \lesssim 
     1 +  \int_{0}^{+ \infty} { t^{|\alpha|} |\xi|^{n +1- | \beta|} \over (1+ t |\xi|)^{N-|\beta|} }\, dt
      \lesssim 1 +  \int_{0}^{+ \infty} { s^{|\alpha|} |\xi|^{n - | \beta| - |\alpha |} \over (1+ s)^{N-|\beta|} }\, ds.$$
 Thus, we can  fix  $n=2d+3$.  We get for $| \alpha| + |\beta | \leq  2 d+ 3$,
   $$  | \partial_{\tau}^\alpha \partial_{\xi}^\beta \widetilde  K^{h,1}(\tau, \xi) | \lesssim 
     1, \qquad  (\tau, \xi) \in \mathbb{S}^d, \, \quad
      | \xi | \leq { 1 \over 2}.$$        
      This, together with \eqref{bd-X1}, concludes the proof.
  \end{proof} 
  
  \begin{rem}
  \label{remxismall}
  We can also express the expansion \eqref{Kxismall}  for $n= 2$ in a slightly different way.
   We write 
  \begin{equation}
  \label{K1defbis}  \widetilde {K}^{h,1}(\tau, \xi)=  { 1 \over (i \tau)^2} \mathscr{P}_{2}(\xi)+ { \xi^{\otimes 2} \over (i\tau)^2} : \widetilde {K}^{h,2}(\tau, \xi)
  \end{equation}
   where $\mathscr{P}_{2}$ is a homogeneous polynomial of degree two in $ \xi$ and
   $$ \widetilde  K^{h,2} (\tau, \xi)= \int_{0}^{+ \infty}   e^{-i \tau t} i \xi \cdot (D_{\xi}^2 \widehat{\nabla_{v}\mu})(t\xi) \, dt.$$
    From the same arguments as in the proof of Lemma~\ref{lemhomogene}, we get that $\widetilde {K}^{h,2}(\tau, \xi)$ is homogeneous of degree zero on $\mathbb{R}^{d+1}$
     and that 
     \begin{equation}
     \label{estKh2}
     | \partial_{\tau}^\alpha \partial_{\xi}^\beta \widetilde  K^{h,2}(\tau, \xi) |
    \leq C, \qquad \forall (\alpha, \beta), \, | \alpha| + |\beta | \leq  2 d+ 3, \, \quad\forall (\tau, \xi) \in \mathbb{S}^d.
    \end{equation}
   \end{rem}
  
Combining Remark \ref{remxismall} and Lemma \ref{lemhomogene}, we obtain 
  \begin{coro}
  \label{corK}
    We can write the expansion
    $$ \widetilde {K}(\tau, \xi) = { 1 \over 1 + |\xi|^2 + \tau^2} \left( \widetilde {K}^{h,1}(\tau, \xi)  - {\mathscr{P}_{2}(\xi) \over { 1 + |\xi|^2}}  - { \xi^{\otimes 2} \over  1+ |\xi|^2} : \widetilde {K}^{h,2}(\tau, \xi) \right)$$
    where $\widetilde {K}^{h,1}$,  $\widetilde {K}^{h, 2}$ are positively homogeneous of degree zero and satisfy the estimates
     \eqref{estKh1}, \eqref{estKh2} and $\mathscr{P}_{2}(\xi)$ is a homogeneous polynomial of degree $2$.
  \end{coro}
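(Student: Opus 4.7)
The corollary is essentially an algebraic rearrangement combining the two representations already at our disposal: the global factorization $\widetilde{K}(\tau,\xi) = (1+|\xi|^2)^{-1}\widetilde{K}^{h,1}(\tau,\xi)$ from Lemma \ref{lemhomogene}, and the identity
$$(i\tau)^2 \widetilde{K}^{h,1}(\tau,\xi) = \mathscr{P}_2(\xi) + \xi^{\otimes 2}:\widetilde{K}^{h,2}(\tau,\xi)$$
obtained by the two integrations by parts carried out in Remark \ref{remxismall}. Neither representation alone provides uniform decay in the joint variable $(\tau,\xi)$: the first gives decay in $\xi$ but not $\tau$, the second gives decay in $\tau$ but not $\xi$. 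The purpose of the corollary is precisely to produce a single formula whose principal denominator $1 + |\xi|^2 + \tau^2$ captures decay in both directions simultaneously, which will be crucial in the pointwise analysis of $G$ in the proof of Theorem \ref{thmvolterrasol}.

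The plan is to insert the trivial partial-fraction identity
$$\frac{1}{1+|\xi|^2} = \frac{1}{1+|\xi|^2+\tau^2} + \frac{\tau^2}{(1+|\xi|^2)(1+|\xi|^2+\tau^2)}$$
into the factorization from Lemma \ref{lemhomogene}, multiply by $\widetilde{K}^{h,1}(\tau,\xi)$, and then substitute $\tau^2 \widetilde{K}^{h,1}(\tau,\xi) = -\mathscr{P}_2(\xi) - \xi^{\otimes 2}:\widetilde{K}^{h,2}(\tau,\xi)$ in the second term. A brief check shows that this yields exactly the claimed expansion for $\widetilde{K}(\tau,\xi)$.

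The remaining structural assertions are free. The homogeneity of degree zero of $\widetilde{K}^{h,1}$ and $\widetilde{K}^{h,2}$, together with the uniform derivative bounds \eqref{estKh1}, \eqref{estKh2} on $\mathbb{S}^d$, are the content of Lemma \ref{lemhomogene} and Remark \ref{remxismall}; and $\mathscr{P}_2(\xi) = i\xi\cdot (D_\xi\widehat{\nabla_v\mu})(0):\xi$ is manifestly a homogeneous polynomial of degree two in $\xi$. There is no real obstacle in this proof: the only delicate point is that the Remark's integration-by-parts identity strictly requires $\tau \neq 0$, but both sides of $\tau^2\widetilde{K}^{h,1}(\tau,\xi) = -(\mathscr{P}_2(\xi) + \xi^{\otimes 2}:\widetilde{K}^{h,2}(\tau,\xi))$ are continuous in $(\tau,\xi) \neq 0$ by the smoothness established in Lemma \ref{lemhomogene}, so the identity extends to the hyperplane $\tau = 0$ by continuity and the algebraic manipulation above is valid throughout $\mathbb{R}^{d+1}\setminus\{0\}$.
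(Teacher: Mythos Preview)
Your proof is correct and is precisely the ``combination of \eqref{K1def} and \eqref{K1defbis}'' that the paper's one-line proof invokes: the partial-fraction identity you write down is exactly how one splices together the $(1+|\xi|^2)^{-1}$ prefactor with the $(i\tau)^{-2}$ expansion to produce the joint denominator $1+|\xi|^2+\tau^2$. Your extra remark handling the $\tau=0$ hyperplane by continuity is a nice touch that the paper leaves implicit.
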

  \begin{proof}
The corollary follows from a  combination of \eqref{K1def} and \eqref{K1defbis}. 
  \end{proof}
  
  Let us now use the properties of $\widetilde K(\tau,\xi)$ to derive the pointwise decay estimates for $G(t,x)$, recalling \eqref{Gdef}. 
We first use the homogeneous Littlewood-Paley decomposition of $\mathbb{R}^{d+1}$
to decompose $G(t,x)$, yielding
    $$ G(t,x)= \sum_{q \in \mathbb{Z}} G_{q}(t,x), 
    $$
   recalling~\eqref{defLP}.
The general strategy will consist of treating differently the contribution of high and low frequencies. 
   We first deal with high frequencies: 
   \begin{lem}
   \label{lemhigh}
    There exist $A \geq 1$ and $C>0$ such that for every $\delta \in (0, 1]$ and every  $q$ with $2^q \geq A$, we have the estimates
   \begin{equation}
   \label{Gqhigh} \|G_{q}(t) \|_{L^1} \leq C { 2^{q( 1 + \delta)}  \over 1 + 2^{2q}} {1 \over (1 +  2^q|t|)^N}, \qquad
    \|G_{q}(t)\|_{L^\infty} \leq C { 2^{q(d+1 + \delta)} \over 1 + 2^{2q}} { 1 \over (1 + 2^q|t|)^N}, \quad \forall t \in \R,
    \end{equation}
for $N = d+3$. 
   \end{lem}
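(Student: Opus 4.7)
The strategy is Fourier-analytic localization. On the support of $\chi_q$ with $2^q \geq A$ sufficiently large, the symbol $\widetilde K$ is pointwise small, so the multiplier $M(\tau,\xi) := \widetilde K(\tau,\xi)/(1-\widetilde K(\tau,\xi))$ is smooth there, and pointwise estimates for $G_q$ follow from integration by parts in the Fourier variables.

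First, I would derive quantitative derivative bounds on $\widetilde K$ (and hence on $M$) on the annulus $|(\tau,\xi)|\sim 2^q$. Using the decomposition in Corollary~\ref{corK}, $\widetilde K(\tau,\xi) = (1+|\xi|^2+\tau^2)^{-1}\Phi(\tau,\xi)$ where $\Phi$ combines the homogeneous symbols $\widetilde K^{h,1}, \widetilde K^{h,2}$ (whose derivatives are controlled up to order $2d+3$ on $\mathbb{S}^d$ by \eqref{estKh1}--\eqref{estKh2}) with the smooth rational factors $\mathscr{P}_2(\xi)/(1+|\xi|^2)$ and $\xi^{\otimes 2}/(1+|\xi|^2)$. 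Rescaling on the annulus yields
$$|\partial_\tau^\alpha\partial_\xi^\beta \widetilde K(\tau,\xi)|\lesssim 2^{-2q - q(\alpha+|\beta|)},\qquad \alpha+|\beta|\leq 2d+3.$$
Choosing $A$ large enough ensures $|\widetilde K|\leq 1/2$ on the annulus, so $|1-\widetilde K|\geq 1/2$, and the quotient rule propagates the same scaling to $M$.

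Starting from the Fourier representation
$$G_q(t,x) = \frac{1}{(2\pi)^{d+1}}\int_{\mathbb{R}^{d+1}}e^{i(\tau t + x\cdot\xi)}M(\tau,\xi)\chi_q(\tau,\xi)\,d\tau\,d\xi,$$
I would integrate by parts $N$ times in $\tau$ alone ($N\leq 2d+3$): the integrand becomes of size $\lesssim 2^{-2q-qN}$ on the annulus (each $\chi_q$-derivative costing $2^{-q}$), and the annulus has measure $\sim 2^{q(d+1)}$, giving
$$\|G_q(t)\|_{L^\infty_x}\lesssim \frac{2^{q(d-1)}}{(1+2^q|t|)^N}.$$
Taking $N = d+3$ and using $2^{q(d-1+\delta)}\sim 2^{q(d+1+\delta)}/(1+2^{2q})$ for $2^q\geq A$ yields the $L^\infty$ bound. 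Separately, integrating by parts $B$ times in $\xi$ alone ($B\leq 2d+3$) produces the weighted bound $|x|^B |G_q(t,x)|\lesssim 2^{q(d-1-B)}$, equivalently
$$|G_q(t,x)|\lesssim \frac{2^{q(d-1)}}{(1+2^q|x|)^B}.$$

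For the $L^1$ bound I combine the two pointwise estimates by taking the minimum and optimizing the transition radius $R\sim 2^{-q}(1+2^q|t|)^{N/B}$; a direct computation then gives
$$\|G_q(t)\|_{L^1_x}\lesssim \frac{1}{B-d}\cdot\frac{2^{-q}}{(1+2^q|t|)^{N(B-d)/B}},\qquad B > d.$$
Choosing $N = B = 2d+3$ produces the exponent $(2d+3)(d+3)/(2d+3) = d+3$ and delivers
$$\|G_q(t)\|_{L^1_x}\lesssim \frac{2^{-q}}{(1+2^q|t|)^{d+3}}\leq \frac{2^{q(1+\delta)}}{1+2^{2q}}\cdot\frac{1}{(1+2^q|t|)^{d+3}}$$
for any $\delta > 0$, as claimed. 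The principal technical step is the first one: verifying that the smooth rational prefactors in Corollary~\ref{corK} contribute bounded $(\tau,\xi)$-derivatives of arbitrary order, so that the total-derivative constraint $2d+3$ is inherited solely from $\widetilde K^{h,1}, \widetilde K^{h,2}$ — this comfortably accommodates $N = B = 2d+3$ for $d\geq 3$.
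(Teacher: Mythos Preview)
Your $L^\infty$ bound via pure $\tau$-integration by parts is fine (indeed it yields the stated estimate even without the $\delta$), but the $L^1$ bound has a genuine gap. The claimed scaling $|\partial_\tau^\alpha\partial_\xi^\beta\widetilde K(\tau,\xi)|\lesssim 2^{-2q-q(\alpha+|\beta|)}$ is \emph{false} for $|\beta|\ge 1$ on the part of the annulus where $|\xi|\ll 2^q$. Concretely, take $|\xi|\sim 1$, $|\tau|\sim 2^q$: in Corollary~\ref{corK} the factor $\mathscr{P}_2(\xi)/(1+|\xi|^2)$ (and likewise $\xi^{\otimes 2}/(1+|\xi|^2)$) varies on scale $\langle\xi\rangle$, not $2^q$, so one $\xi$-derivative gives only $|\partial_{\xi_j}\widetilde K|\sim 2^{-2q}$ rather than $2^{-3q}$. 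Your ``principal technical step'' is therefore misidentified: mere boundedness of the derivatives of the rational prefactors is true but irrelevant --- what your argument actually requires is the symbol-type scaling $\partial_\xi\sim 2^{-q}$, and the Riesz-type pieces in $\xi$ spoil exactly that. Consequently the bound $|x|^B|G_q|\lesssim 2^{q(d-1-B)}$ from $\xi$-integration by parts is not valid (the correct bound for $B>d$ is only $\lesssim 2^{-q}$, coming from the region $|\xi|\lesssim 1$), and the $L^1$ estimate you build on it collapses.

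The paper avoids this obstacle by a different route. It writes $G_q=K_q+\mathcal{K}_q\ast_{t,x}G_q$ and first estimates $K_q$: the Riesz-type piece $\widetilde K_{q,2}=-\frac{1}{1+|\xi|^2+\tau^2}\frac{\mathscr{P}_2(\xi)}{1+|\xi|^2}\chi_q$ is isolated and handled through the multiplier estimate~\eqref{riesz2}, which is precisely where the loss $2^{q\delta}$ enters the statement. Only the remaining piece is treated by the scaling/IBP argument you describe. The passage from $K_q,\mathcal{K}_q$ to $G_q$ is then closed by a bootstrap using the smallness of $\int_{\mathbb R}\|\mathcal{K}_q(t)\|_{L^1}\,dt\lesssim 2^{q\delta}/(1+2^{2q})$ for $2^q\ge A$, a step your direct approach does not contain. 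In short, your argument would succeed if $\widetilde K/(1-\widetilde K)$ were a genuine symbol of order $-2$ on the $(\tau,\xi)$-annulus at scale $2^q$; it is not, because of the zero-order Riesz factor in the $\xi$ variable alone.
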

   \begin{proof}
    We first observe that we can rewrite \eqref{Gdef} under the form
     $$ \widetilde {G}= \widetilde  K + \widetilde  K \widetilde  G.$$
Let $\varphi$ be a smooth cut-off function that is supported in an annulus slightly larger than that of $\chi$, so that $\varphi= 1$ on the support of $\chi$. 
Setting
    $$   \widetilde {\mathcal{K}}_{q}= \varphi\left(\frac{(\tau,\xi)}{2^q}\right) \widetilde {K},$$
we hence get the convolution equation
     \begin{equation}
     \label{convolGq}G_{q}=  K_{q} + {\mathcal{K}}_{q} *_{t,x} G_{q} .
     \end{equation}
          
Let us first bound $K_q$. We shall prove that for all $q>1$,
         \begin{equation}
    \label{Kq1} \|{K}_{q}(t)\|_{L^1_x}
      \lesssim  { 2^{q \delta} \over 1 + 2^{2q}}  { 2^q \over (1 + 2^q |t|)^N},
      \end{equation}
for $N = d+3$. 
Using  the expansion in Corollary \ref{corK},
     we write
     $$
     \widetilde {K}_q = \widetilde {K}_{q,1} + \widetilde {K}_{q,2}, 
     $$
     with
     $$
     \begin{aligned}
  \widetilde {K}_{q,1}&=   { 1 \over 1 + |\xi|^2 + \tau^2} \left( \widetilde {K}^{h,1}(\tau, \xi)    - { \xi^{\otimes 2} \over  1+ |\xi|^2} : \widetilde {K}^{h,2}(\tau, \xi) \right) \chi_q(\tau,\xi), \\
   \widetilde {K}_{q,2}&=    { -1 \over 1 + |\xi|^2 + \tau^2}{\mathscr{P}_{2}(\xi) \over { 1 + |\xi|^2}} \chi_q(\tau,\xi).
   \end{aligned}
  $$

We first check \eqref{Kq1} for $K_{q,1}$.  By a scaling argument, we can write
   $$ {K}_{q,1}(t, x) = 2^{q(d+1)} k_{q,1}(2^qt,  2^q x)$$ where 
   $$  k_{q,1}(T, X)=  \mathcal{F}^{-1}_{(\tau, \xi) \rightarrow (T,X)}
     \widetilde  K_{q,1}(2^q \tau, 2^q \xi).$$ 
We claim that
     \begin{equation}
\label{eq-kq1}
|\partial_{\tau}^\alpha \partial_{\xi}^\beta \widetilde  k_{q,1} (\tau,\xi)| \lesssim \frac{1}{1+2^{2q}},
\end{equation}
  uniformly in $q$ for $q>1$ for $| \alpha| + |\beta | \leq N + d$.
This  yields  \eqref{Kq1} for $K_{q,1}$. Indeed, from \eqref{eq-kq1}, we obtain from taking integration by parts
   $$  
   |k_{q,1}(T, X)| \lesssim  \frac{1}{1+2^{2q}}  { 1 \over (1 + |T| + |X|)^{ N+d}},
   $$
which in turn implies   
$$
 \|k_{q,1}(T) \|_{L^1_X} \leq \frac{1}{1+2^{2q}}  {1 \over (1 + |T|)^N}.
 $$
Therefore, by a change of variables, we obtain
$$
\|{K}_{q,1}(t)\|_{L^1_x}
      \lesssim  { 1 \over 1 + 2^{2q}}  { 2^q \over (1 + 2^q |t|)^N}.
$$  
We now prove the claim \eqref{eq-kq1}. By  homogeneity of $ \widetilde {K}^{h,1}$ and $\widetilde {K}^{h,2}$, we can write
    $$   \widetilde  k_{q,1}(\tau, \xi)=    { 1 \over 1 + | 2^q \xi|^2 + (2^q \tau)^2} \left( \widetilde {K}^{h,1}(\tau, \xi)  - { \xi^{\otimes 2} \over  2^{-2q}+ |\xi|^2} : \widetilde {K}^{h,2}(\tau, \xi) \right) \chi(\tau, \xi).$$
     Since $\widetilde {K}^{h,1}$ and $\widetilde {K}^{h,2}$ are smooth homogeneous  functions of
degree
      zero that satisfy~\eqref{estKh1} and~\eqref{estKh2}, we have
      $$
    | \partial_{\tau, \xi}^\gamma \widetilde {K}^{h,j} (\tau,\xi)| \lesssim |(\tau,\xi)|^{-|\gamma|}\lesssim 1,
      $$
since on support of $\chi$, $|\xi|+ |\tau|$ is bounded from below by a strictly positive number. On the other hand,
   $$
     \left| \partial_{\tau, \xi}^\gamma \left({ 1 \over 1 + | 2^q \xi|^2 + (2^q \tau)^2}  \right) \right|
   \lesssim { 2^{q |\gamma |} \over (1 + 2^{2q}( |\xi |^2 + \tau^2))^{1 + |\gamma|/2}}
    \lesssim  \frac{1}{1+2^{2q}}
      $$
on the support of $\chi$.  We thus deduce the estimate \eqref{eq-kq1}, and hence \eqref{Kq1} for $K_{q,1}$. 

For what concerns $K_{q,2}$, using~\eqref{riesz2}, for any $\delta \in (0,1]$, we have
 $$
 \| K_{q,2}(t)\|_{L^1_x} \lesssim  2^{q\delta} \left\| \mathcal{F}^{-1}_{(\tau, \xi) \rightarrow (t,x) } \left({ 1 \over 1 + |\xi|^2 + \tau^2} \chi_q(\tau,\xi)\right) \right\|_{L^1_x},
 $$
 and arguing as for $K_{q,1}$ with a scaling argument, we deduce~\eqref{Kq1}.

      

 Moreover, exactly as above, we obtain for all $q>1$,
      \begin{equation}
    \label{Kq1bis} \|{\mathcal{K}}_{q}(t)\|_{L^1_x}
      \lesssim  { 2^{q \delta} \over 1 + 2^{2q}}  { 2^q \over (1 + 2^q |t|)^N} .
      \end{equation}

      \bigskip

   Therefore, rewriting \eqref{convolGq} as
   \begin{equation}
   \label{convolGq2}
    G_{q}(t,x)= K_{q}(t,x) + \int_{\mathbb{R}} \left({\mathcal{K}}_{q}(t-s, \cdot) *_{x} G_{q}(s, \cdot) \right)(x) \, ds,
   \end{equation}
and taking the $L^1$ norm in $x$ and by using \eqref{Kq1} and~\eqref{Kq1bis}, we thus obtain that
 $$  \|G_{q}(t) \|_{L^1} \lesssim  { 2^{q \delta} \over 1 + 2^{2q}}  { 2^q \over (1 + 2^q |t|)^N}
  + \int_{\mathbb{R}}  { 2^{q \delta} \over 1 + 2^{2q}}  { 2^q \over (1 + 2^q |t-s|)^N} \|G_{q}(s) \|_{L^1}\, ds.$$
  Let us set for all $T\in \R$
  $$ |||G_q|||_{1,T} =  (1 + |T|)^N \|G_{q} ( { T \over 2^q})\|_{L^1}.$$ 
  We deduce after a change of variables that
\begin{equation}\label{bdGGG}  \begin{aligned} |||G_q|||_{1,T} & \lesssim  { 2^q  2^{q \delta} \over 1 + 2^{2q} }  +
    { 2^{q \delta} \over 1 + 2^{2q} }  \int_{\mathbb{R} } { ( 1 + |T|)^N \over  (1 +|T-S|)^N } {1 \over (1 + |S|)^N} |||G_q|||_{1,S}  \, dS
    \\ &\lesssim { 2^q 2^{q \delta} \over 1 + 2^{2q}}   +
    { 2^{q \delta} \over 1 + 2^{2q}}  \sup_{S\in \mathbb{R}}  |||G_q|||_{1,S}  ,
    \end{aligned}\end{equation}
    where we have used that for $N>1$, 
   $$ \sup_{T \in \mathbb{R}}  \int_{\mathbb{R} } {  ( 1 + |T|)^N\over  (1 +|T-S|)^N } {1\over (1 + |S|)^N} \, dS <+\infty.$$
  Consequently, after taking the sup in $T$, we can find $A>1$ sufficiently large 
  such that for all $q$ satisfying $2^q>A$, the last term on the right of \eqref{bdGGG} is absorbed into the left, yielding 
  $$  \begin{aligned} 
  \sup_{T}|||G_q|||_{1,T} 
  \lesssim { 2^q 2^{q \delta} \over 1 + 2^{2q}}.     
  \end{aligned}$$
This proves the $L^1$ estimate in \eqref{Gqhigh}. 
  
  It remains to estimate the $L^\infty$ norm. The proof  follows the same lines. 
Arguing as for~\eqref{Kq1}, we obtain    
$$\|{K}_q(t)\|_{L^\infty(\mathbb{R}^d)} + \|\mathcal{K}_q(t)\|_{L^\infty(\mathbb{R}^d)}
      \lesssim  { 2^{q \delta} \over 1 + 2^{2q}}  { 2^{q(d+1)} \over (1 + 2^q |t|)^N},
   $$
   for any $\delta  \in (0,1]$.
We then get by using   \eqref{convolGq} and \eqref{Kq1}  that
$$\begin{aligned} \|G_{q}(t) \|_{L^\infty} &\lesssim 
 { 2^{q \delta} \over 1 + 2^{2q}}  { 2^{q(d+1)} \over (1 + 2^q |t|)^N}+
  \int_{\mathbb{R}} \|{\mathcal{K}}_{q}(t-s)\|_{L^1} \|G_{q}(s) \|_{L^\infty} ds
  \\  &\lesssim   { 2^{q \delta} \over 1 + 2^{2q}}  { 2^{q(d+1)} \over (1 + 2^q |t|)^N}+ \int_{\mathbb{R}}
 { 2^{q \delta} \over 1 + 2^{2q}}  { 2^q \over (1 + 2^q |t-s|)^N} \|G_{q}(s) \|_{L^\infty}\, ds.
\end{aligned}$$
   We then conclude as before by setting
   $$  |||G_q|||_{\infty,T}  =  (1 + |T|)^N \|G_{q} ( { T \over 2^q})\|_{L^\infty},$$ 
   that 
   $$ \sup_{T} |||G_q|||_{\infty,T} \lesssim   { 2^{q \delta} 2^{q(d+1)} \over 1 + 2^{2q}}  + 
  { 2^{q \delta} \over 1 + 2^{2q}}  \sup_{S} |||G_q|||_{\infty,S}.$$ 
  The estimate \eqref{Gqhigh} for the $L^\infty$ norm thus follows by choosing $A$ sufficiently large. 
       \end{proof}

From now on, $A>1$ is fixed  and 
there remains to estimate $G_{q}$ for $ 2^q \leq A$. This is the content of the next lemma.
 \begin{lem}
   \label{lemlow}
 For $A> 1$,   there exists  $C>0$ such that for every $q\in \mathbb{Z}$ with $2^q \leq A$, we have the estimate
   \begin{equation}
   \label{Gqlow} \|G_{q}(t) \|_{L^1} \leq C  { 2^{q} \over (1 +  2^q|t|)^N}, \qquad
    \|G_{q}(t)\|_{L^\infty} \leq C { 2^{q(d+1)}  \over (1 + 2^q|t|)^N}, \quad \forall t \in \R,
    \end{equation}
for $N = d+3$. 
\end{lem}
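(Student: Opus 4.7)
The plan is to rescale the problem to a fixed frequency annulus, prove that the rescaled symbol is smooth with bounds uniform in $q$ (for $2^q \le A$) by combining the decomposition of Corollary~\ref{corK} with the Penrose condition, and then extract the required pointwise decay via integration by parts in the Fourier integral.

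Concretely, I would set $g_q(T,X) := 2^{-q(d+1)} G_q(2^{-q}T, 2^{-q}X)$, so that $G_q(t,x) = 2^{q(d+1)} g_q(2^q t, 2^q x)$. After this scaling, the desired bounds~\eqref{Gqlow} reduce to showing
$$
\|g_q(T,\cdot)\|_{L^1_X} + \|g_q(T,\cdot)\|_{L^\infty_X} \lesssim (1+|T|)^{-N}, \qquad N=d+3,
$$
uniformly for $2^q \le A$. The Fourier transform of $g_q$ is
$$
\widetilde{g}_q(\tau,\xi) = \frac{\widetilde{K}(2^q \tau, 2^q \xi)}{1-\widetilde{K}(2^q \tau, 2^q \xi)}\,\chi(\tau,\xi),
$$
compactly supported in the fixed annulus $\{\tfrac14 \le |(\tau,\xi)| \le 4\}$, so everything reduces to uniform control of derivatives of this symbol.

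The core technical step is to prove that $\widetilde{g}_q \in \mathscr{C}^{2d+3}$ with bounds uniform in $q$ for $2^q \le A$. Using Corollary~\ref{corK} together with the degree-zero homogeneity of $\widetilde{K}^{h,1}$ and $\widetilde{K}^{h,2}$, I would rewrite
$$
\widetilde{K}(2^q \tau, 2^q \xi) = \frac{1}{1+2^{2q}(|\xi|^2+\tau^2)} \left( \widetilde{K}^{h,1}(\tau,\xi) - \frac{2^{2q}\,\mathscr{P}_{2}(\xi)}{1+2^{2q}|\xi|^2} - \frac{2^{2q}\,\xi^{\otimes 2}}{1+2^{2q}|\xi|^2} : \widetilde{K}^{h,2}(\tau,\xi) \right).
$$
On the support of $\chi$ every factor is smooth, and since $2^q \le A$ each factor has all derivatives up to order $2d+3$ bounded uniformly in such $q$, thanks to~\eqref{estKh1}--\eqref{estKh2}. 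The Penrose condition~(H2) yields $|1-\widetilde{K}(2^q\tau,2^q\xi)| \ge \kappa$, so dividing by $1-\widetilde{K}$ preserves all bounds.

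With this uniform smoothness in hand, $2d+3$ integrations by parts in the Fourier representation of $g_q$ give the pointwise estimate $|g_q(T,X)| \lesssim (1+|T|+|X|)^{-(2d+3)}$ uniformly in $q$ with $2^q \le A$ (obtained by bounding $|T|^{2d+3}|g_q|$ via all derivatives in $\tau$ and $|X_j|^{2d+3}|g_q|$ via all derivatives in $\xi_j$, then combining the bounds). Integrating over $X$ yields the $L^1$ bound with $N=d+3$ and taking the supremum over $X$ gives the $L^\infty$ bound with the same $N$; undoing the scaling produces~\eqref{Gqlow}. The main obstacle will be the uniform smoothness step, because $\widetilde{K}$ itself has derivatives that blow up at the origin and naive estimates would diverge as $q \to -\infty$. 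The decomposition of Corollary~\ref{corK} is precisely tailored to handle this: the chain-rule factors $2^{q|\gamma|}$ that appear when differentiating $\widetilde{K}^{h,j}(2^q\cdot)$ are absorbed by the $-|\gamma|$ homogeneity of $\partial^\gamma \widetilde{K}^{h,j}$, leaving bounds depending only on $|(\tau,\xi)| \sim 1$ on the support of $\chi$.
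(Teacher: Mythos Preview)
Your proposal is correct and follows essentially the same route as the paper: rescale to a unit annulus, use Corollary~\ref{corK} and homogeneity to rewrite $\widetilde K(2^q\tau,2^q\xi)$, invoke the Penrose condition to control the denominator, obtain uniform $\mathscr{C}^{2d+3}$ bounds on $\widetilde g_q$, and then integrate by parts to get the pointwise decay. Your expression $\frac{2^{2q}\mathscr{P}_2(\xi)}{1+2^{2q}|\xi|^2}$ is simply the paper's $\frac{\mathscr{P}_2(\xi)}{2^{-2q}+|\xi|^2}$ rewritten, and your explanation of why the chain-rule factors cancel via the $-|\gamma|$ homogeneity of $\partial^\gamma \widetilde K^{h,j}$ is exactly the mechanism the paper exploits.
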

 \begin{proof}
  We use directly the expression \eqref{Gdef} and  argue as in the proof of Lemma~\ref{lemhigh}.
  By a scaling argument we can write
   that 
   $$ G_{q}(t, x) = 2^{q(d+1)} g_{q}(2^qt,  2^q x)$$ where 
   $$  g_{q}(T, X)=  \mathcal{F}^{-1}_{(\tau, \xi) \rightarrow (T,X)}
       {  \widetilde  K(2^q \tau, 2^q \xi)   \over  1 -   \widetilde  K(2^q\tau, 2^q\xi) } \chi(\tau, \xi).$$ 
     To get the result, it is sufficient to use  prove that
     $\partial_{\tau}^\alpha \partial_{\xi}^\beta \widetilde g_{q} $ is bounded on the support of $\chi$
     uniformly in $q$ for $2^q \leq A$ for $| \alpha| + |\beta | \leq N + d$.
   From the Penrose stability condition (H2), such an estimate for $\widetilde g_{q}$ follows from  a similar  one
  bearing  on $$\widetilde k_{q}(\tau, \xi) = \widetilde K( 2^q \tau, 2^q \xi) \chi(\tau,\xi).$$
    From Corollary \ref{corK}, we can write by homogeneity  that
    $$ \widetilde  k_{q}(\tau, \xi)= { 1 \over 1 + | 2^q \xi|^2 + (2^q \tau)^2} \left( \widetilde {K}^{h,1}(\tau, \xi)  - {\mathscr{P}_{2}(\xi) \over {{2^{-2q}} + |\xi|^2}}  - { \xi^{\otimes 2} \over  {2^{-2q}}+ |\xi|^2} : \widetilde {K}^{h,2}(\tau, \xi) \right) \chi(\tau, \xi).$$
 Since on the support of $\chi$, $|\xi|+ |\tau|$ is bounded  below by a  positive number, 
  we have that  uniformly in $q$,
  $$\left| \partial_{\tau, \xi}^\gamma \left({ 1 \over 1 + | 2^q \xi|^2 + (2^q \tau)^2}  \right) \right|
   \lesssim { 2^{q |\gamma |} \over (1 + 2^{2q}( |\xi |^2 + \tau^2))^{1+|\gamma|/2 }}
    \lesssim  1.$$
     We also observe that on the support of $\chi$,  $ \xi$ belongs to a ball  so that we  have
     $$\left | \partial_{\xi}^\alpha \left(  {\mathscr{P}_{2}(\xi) \over { { 2^{-2q}} + |\xi|^2}} \right)\right | \lesssim{ 2^{2q |\alpha|}}\lesssim 1.$$
     This is for the control of this term that we use that we are in the low frequency regime $2^q \leq A$.
      Since  $\widetilde {K}^{h,1}(\tau, \xi)$ and  $\widetilde {K}^{h,2}(\tau, \xi)$ satisfy \eqref{estKh1}, \eqref{estKh2}, 
      the uniform estimate for  $\partial_{\tau}^\alpha \partial_{\xi}^\beta \widetilde k_{q} $  follows.       
  \end{proof}  
  
   \subsection*{End of the Proof of Theorem \ref{thmvolterrasol}}
   
 We shall combine Lemma \ref{lemhigh} and Lemma \ref{lemlow}. Let $A$ be given 
   by Lemma \ref{lemhigh}.

  We have
 \begin{equation}
 \label{GqsplitA} \|G(t) \|_{L^1} \leq \sum_{ 2^q \leq A} \|G_{q} (t)\|_{L^1} + \sum_{ 2^q \geq  A} \|G_{q} (t)\|_{L^1} 
 \end{equation}
   Let us first consider large time estimates that is to say  for $ t \geq A$.   In this case, the second sum gives
   $$ \sum_{2^q \geq A} \|G_{q}(t)\|_{L^1} \lesssim \sum_{2^q \geq A}  { 2^{q( 1 + \delta)} \over 1 + 2^{2 q}} { 1  \over (1 + 2^q |t|)^N}
     \lesssim { 1 \over |t|^N},$$
     since $\delta \leq 1$.
      For the first sum, we split
     $$   \sum_{ 2^q \leq A} \|G_{q} (t)\|_{L^1} \lesssim  \sum_{ 2^q \leq A} { 2^q  \over (1 + 2^q |t|)^N}
      \lesssim   \sum_{ 2^q  \leq t^{-1}}   { 2^q  \over (1 + 2^q |t|)^N} +  \sum_{ t^{-1} \leq  2^q  \leq A} { 2^q  \over (1 + 2^q |t|)^N}.$$
      The first term above contains only negative $q$ so that
      $$  \sum_{ 2^q  \leq t^{-1}}   { 2^q  \over (1 + 2^q |t|)^N} 
       \lesssim  \sum_{ 2^q  \leq t^{-1}}  2^q \lesssim  { 1 \over t}.$$ 
     For the second term, we write 
   $$\sum_{ t^{-1} \leq  2^q  \leq A} { 2^q  \over (1 + 2^q |t|)^N} 
    \lesssim  { 1 \over t^N} \sum_{ t^{-1} \leq  2^q  \leq 1} 2^{q( 1 - N)} +{ 1 \over t^N} \sum_{1 \leq 2^q \leq A} { 2^{q(1-N)}  }
    \lesssim  { 1 \over t} + { 1 \over t^N},
    $$
    since $N>1$.
   We have thus proven that for $t$ large enough
   $$ \|G(t) \|_{L^1} \lesssim { 1 \over t}.$$
  The estimates for the $L^\infty$ norm follows the same lines using that $N>d+1$.
  
  Let us explain  how we obtain the estimates for $t \leq A$. We use again \eqref{GqsplitA}. 
   For the first sum, we just use that
   $$  \sum_{ 2^q \leq A} \|G_{q}(t) \|_{L^1} \lesssim  \sum_{ 2^q \leq A} { 2^q  \over (1 + 2^q |t|)^N} \lesssim 
    \sum_{ 2^q \leq A} 2^q \lesssim 1.$$
     For the second sum, we write 
   $$ \sum_{ 2^q \geq A} \|G_{q}(t)\|_{L^1} \lesssim 
     \sum_{ 2^q \geq A}  { 2^{q( 1 + \delta)} \over 1 + 2^{2 q}} { 1  \over (1 + 2^q |t|)^N}
      \lesssim   \sum_{ 2^q \geq A} 2^{q( - 1 + \delta)} \lesssim 1.$$
  To get the short time estimates for the $L^\infty$ norm, we only handle in a slightly different way the second sum.
   We write
$$ \begin{aligned}\sum_{ 2^q \geq A} \|G_{q}(t)\|_{L^\infty}
   &\lesssim   \sum_{ 2^q \geq A}  { 2^{q(d+ 1 + \delta)} \over 1 + 2^{2 q}} { 1  \over (1 + 2^q |t|)^N}
    \\&\lesssim   \sum_{ t^{-1} \geq  2^q \geq A}   2^{q(d - 1+ \delta)} 
     + { 1 \over t^N} \sum_{  2^q \geq t^{-1}}   2^{q(d - 1 + \delta -N)}   \lesssim  { 1 \over t^{d-1 + \delta}} .
\end{aligned}$$

We can also estimate derivatives of $G$ using the Bernstein inequality~\eqref{Bernstein1}.
 Note that we will use only large time estimates (that is to say for $t\geq 1$). We write
$$ \begin{aligned}
  \|\nabla G(t) \|_{L^1} &\leq \sum_{ 2^q \leq A}  2^q\|G_{q} (t)\|_{L^1} + \sum_{ 2^q \geq  A} 2^q \|G_{q}(t) \|_{L^1} 
  \\ 
  &\lesssim  \sum_{{ 2^q} \leq t^{-1} } 2^{2q } +  { 1 \over t^N}  \sum_{ {t^{-1}} \leq  2^q \leq A}  2^{q (2 -N)} +  { 1 \over t^N }\sum_{2^q \geq A}  { 2^{q( 2 + \delta)} \over 2^{q (2 +N)} }   \lesssim { 1 \over t^2} + { 1 \over t^N}.
 \end{aligned}$$
The estimate for the $L^\infty$ norm of the derivatives follows the same lines, using $N>d+2$.
  This concludes the proof of Theorem \ref{volterrasol}.
 
%
%

\section{The bootstrap argument}\label{sec-nonlin}

Equipped with the linear estimates (in the form of Corollary~\ref{corodecayL}), we are now in position to introduce the continuation argument that we shall use  to 
establish Theorem \ref{theomain}.  
As usual, the characteristics $(X_{s,t}(x,v),V_{s,t}(x,v))$ associated to the transport equation with the vector field $(v, E(t,x))$ are defined as the solution to the ODE system:
\begin{equation}
\label{charac}
\left \{ 
\begin{aligned}
&\frac{d}{ds}X_{s,t}(x,v) = V_{s,t}(x,v), \qquad &X_{t,t}(x,v) =x,\\
&\frac{d}{ds} V_{s,t}(x,v) = E(s,X_{s,t}(x,v)), \qquad &V_{t,t}(x,v) =v.
\end{aligned}
\right.
\end{equation}
By the method of characteristics, the solution to the Vlasov-Poisson system~\eqref{VP} must satisfy
\begin{equation}
\label{charf}
f(t,x,v)= f_0(X_{0,t}(x,v) , V_{0,t}(x,v))  - \int_0^t E(s,X_{s,t}(x,v)) \cdot \na_v \mu(V_{s,t}(x,v))  \,  ds.
\end{equation}
Consequently, $\rho(t,x)= \int_{\R^d} f(t,x,v)\, dv$ solves the equation 
\begin{equation}\label{iter-rho}
\rho(t,x) -  \int_0^t \int_{\R^d}  [\na_x (1 - \Delta_x)^{-1} \rho](s,x -(t-s)v) \cdot \na_v \mu(v) \, dv ds = S(t,x),
\end{equation}
with
$$\begin{aligned}
S(t,x) &= \int_{\R^d} f_0(X_{0,t}(x,v) , V_{0,t}(x,v)) \, dv +  \int_0^t \int_{\R^d}  E(s,x-(t-s)v) \cdot \na_v \mu(v)   \, dv  ds 
\\&\quad - \int_0^t \int_{\R^d}  E(s,X_{s,t}(x,v)) \cdot \na_v \mu(V_{s,t}(x,v))  \, dv  ds.
\end{aligned}$$

To study \eqref{iter-rho}, let us introduce the following weighted in time norm:
\begin{equation}\label{def-iterN}
\begin{aligned}
\mathcal{N}(t) &= 
\sup_{[0,t]}   { 1 \over \log ( 2+ s) }\Big( \|\rho(s)\|_{L^1} + \langle s\rangle^d \|\rho(s) \|_{L^\infty} +\langle s\rangle \| \nabla \rho(s)\|_{L^1} 
+ \langle s\rangle^{d+1} \|\nabla \rho(s)\|_{L^\infty}\Big).
\end{aligned}
\end{equation}
First, we recall the following local well-posedness result for~\eqref{VP} whose proof is standard and therefore omitted.  In what follows, we shall say that  $f\in W^{\sigma,p}_{k}$  if $\langle v \rangle^k f \in W^{\sigma,p}_{x,v}$.
\begin{prop}[Local well-posedness] 
\label{prop-LWP} 
Let  $f_{0} \in W^{1,1} \cap W^{\sigma,p}_{k/p'}$ with $k>d$ and $p(\sigma -1)> 2d$. Then, there exists $T_0>0$ and a unique classical solution $f(t) \in \mathscr{C}([0,T_0], W^{1,1} \cap W^{\sigma,p}_{k/p'})$.
Denote by  $T^\star>0$  the maximal existence time;
if $T^\star<+\infty$, then
\begin{equation}
\label{eq-BU-crit}
 \| \rho \|_{L^1(0,T^\star, W^{1,\infty})} =+\infty.
\end{equation}

%

\end{prop}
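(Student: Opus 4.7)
The plan is to proceed by a classical Picard iteration scheme adapted to the weighted Sobolev setting, together with a continuation argument that yields the blow-up criterion.

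I would first record two structural ingredients that drive everything. Morrey's embedding yields $W^{\sigma,p}(\R^{2d}) \hookrightarrow W^{1,\infty}(\R^{2d})$ under the assumption $p(\sigma-1)>2d$; in particular, functions in $W^{\sigma,p}_{k/p'}$ are globally Lipschitz in $(x,v)$. Second, since $k>d$, H\"older's inequality in $v$ gives
\[
\|\rho\|_{L^p_x} \lesssim \|\langle v\rangle^{k/p'} f\|_{L^p_{x,v}},
\]
because $\langle v\rangle^{-k/p'}\in L^{p'}(\R^d_v)$. The same device transfers control of $\na_{x,v} f$ into $W^{1,1}\cap W^{\sigma,p}$ control on $\rho$, which via the elliptic gain of $(1-\Delta_x)^{-1}$ and Sobolev embedding translates into Lipschitz control of $E=-\na_x(1-\Delta_x)^{-1}\rho$.

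Next I would implement the iteration. Define $f^{n+1}$ as the solution of the linear transport equation
\[
\pa_t f^{n+1} + v\cdot \na_x f^{n+1} + E^n \cdot \na_v f^{n+1} = -E^n \cdot \na_v \mu, \qquad E^n = -\na_x(1-\Delta_x)^{-1}\int f^n\,dv,
\]
with initial datum $f_0$, so that $\mu + f^{n+1}$ is transported along the characteristics driven by $E^n$. Standard Gr\"onwall estimates on the flow, combined with the Lipschitz bound on $E^n$ from the previous paragraph, produce a uniform bound on the $W^{1,1}\cap W^{\sigma,p}_{k/p'}$ norms of $f^{n+1}$ on a small interval $[0,T_0]$ with $T_0$ depending only on the norm of $f_0$. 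Convergence is then obtained by contraction in the lower-regularity space $\mathscr{C}([0,T_0];L^p_{k/p'})$: the difference $f^{n+1}-f^n$ satisfies a transport equation whose source is linear in the difference of the previous iterates and is controlled uniformly by the a priori bounds. Uniqueness in the class is an immediate byproduct of the same estimate applied to two candidate solutions.

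Finally, for the blow-up criterion I would argue by contradiction: suppose $T^\star<+\infty$ and $\|\rho\|_{L^1(0,T^\star;W^{1,\infty})}<+\infty$. Elliptic regularity for $(1-\Delta_x)^{-1}$ then yields $E\in L^1(0,T^\star;W^{2,\infty})$. Gr\"onwall applied to the ODEs governing the Jacobian $\na_{x,v}(X_{s,t},V_{s,t})$ gives uniform bounds on the flow on $[0,T^\star)$, and since this flow preserves phase-space volume, plugging it into the Lagrangian formula \eqref{charf} controls $\|f(t)\|_{W^{1,1}}$ and $\|f(t)\|_{W^{\sigma,p}_{k/p'}}$ uniformly up to $T^\star$; this contradicts maximality and hence forces \eqref{eq-BU-crit}. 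The main technical difficulty lies precisely in propagating the fractional norm $W^{\sigma,p}_{k/p'}$ along the iteration and the flow when $\sigma\notin \N$; I would handle it either by viewing $W^{\sigma,p}$ as the real interpolation space between $W^{1,p}$ and $W^{2,p}$, so that only the two integer levels need to be closed, or equivalently via a Littlewood--Paley decomposition in $(x,v)$ together with standard commutator estimates for the transport operator, in the spirit of the Besov-space alternative $B^\sigma_{p,1}$ mentioned by the authors right after the statement.
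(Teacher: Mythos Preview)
The paper omits the proof entirely, calling it ``standard''; your outline is exactly the standard Picard/characteristics argument the authors have in mind, and it is correct. One small quibble: from $\rho\in W^{1,\infty}$ you only get $E\in W^{1,\infty}$, not $W^{2,\infty}$ (the second-order multiplier $\na_x^2(1-\Delta_x)^{-1}$ is not bounded on $L^\infty$), but Lipschitz control of $E$ is all that is needed to propagate both $W^{1,1}$ and $W^{\sigma,p}_{k/p'}$ along the flow, so the argument goes through unchanged.
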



We thus apply Proposition~\ref{prop-LWP}, 
 to obtain a unique local solution $f(t) \in \mathscr{C}([0,T_0], W^{1,1} \cap W^{\sigma,p}_{k/p'})$ of~\eqref{VP} 
   that can be continued 
  as long as $\|\rho(t) \|_{W^{1, \infty}}$ remains bounded. We note that the continuity in time  of $f$ and the Sobolev embedding entail that the function $t \mapsto \mathcal{N}(t)$ is continuous as well.

Let $\eps \in (0,1]$ to be fixed later. 
We define
\begin{equation}\label{def-iterTime}
T= \sup \Big\{t  \in (0,T^\star),  \, \mathcal{N}(t) \leq \eps\Big\},
\end{equation}
where $T^\star$ is the maximal time of existence.
By continuity of $\mathcal{N}$ and thanks to the blow-up criterion~\eqref{eq-BU-crit}, there is $\eps_0>0$ such that if
$$ \| \langle v \rangle^k f_{0}\|_{W^{1, \infty}} +   \|  f_{0}\|_{W^{1,1}} \leq \eps_{0},$$
then $T^\star >2$ and moreover we can ensure that  the time defined in~\eqref{def-iterTime}  also satisfies $T>2$.


Now applying the linear theory, namely Corollary \ref{corodecayL},  for the equation \eqref{iter-rho},  we have 
\begin{equation}\label{bd-NNN}
\mathcal{N}(t) 
\leq M'\left(  \|S\|_{Y^0_{t}} +  \|S\|_{Y_{t}^1}\right),
\end{equation}
for $t\le T$, where the norms $Y_t^0, Y_t^1$ are defined as in \eqref{def-Y0Y1}. 

The main task is therefore  to estimate $\|S\|_{Y^0_{t}} +  \|S\|_{Y_{t}^1}$.
We are going to prove the following result.
\begin{thm}
\label{prop-main}
There exist $\eps_1 \in (0,1]$,  $C_0>0$ such that for all $\eps \in (0,\eps_1)$, for all $t \leq T$ (where $T$ is defined as in \eqref{def-iterTime}), 
\begin{equation}
\|S\|_{Y_{t}^0} +  \|S\|_{Y_{t}^1} \leq C_0 \eps_{0}+ C_0 \eps^2.
\end{equation}
\end{thm}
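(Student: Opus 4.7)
The plan is to split
\[ S = S_{\mathrm{data}} + S_{\mathrm{reac}}, \]
with
\[ S_{\mathrm{data}}(t,x) = \int_{\R^d} f_0(X_{0,t}(x,v), V_{0,t}(x,v)) \, dv, \]
and $S_{\mathrm{reac}}$ the remaining double integral involving the electric field, and to prove separately $\|S_{\mathrm{data}}\|_{Y_t^0} + \|S_{\mathrm{data}}\|_{Y_t^1} \lesssim \eps_0$ and $\|S_{\mathrm{reac}}\|_{Y_t^0} + \|S_{\mathrm{reac}}\|_{Y_t^1} \lesssim \eps^2$. Under the bootstrap $\mathcal{N}(t) \leq \eps$, the electric field $E = -\na_x(1-\Delta)^{-1}\rho$ inherits the free-transport-type decay of $\rho$ (up to a logarithmic factor), and, by the forthcoming stability estimates of Sections \ref{sec-rechar}--\ref{sec-straightXV}, the characteristic flow is a near-identity perturbation of free transport, with displacements $|X_{s,t}(x,v) - (x-(t-s)v)| + |V_{s,t}(x,v) - v| \lesssim \eps$ and Jacobians differing from those of free transport only by $O(\eps)$.

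For $S_{\mathrm{data}}$, I would follow the Bardos--Degond scheme and perform the change of variables $w = X_{0,t}(x,v)$, whose Jacobian behaves like $t^{-d}(1 + O(\eps))$. This reduces matters to $t^{-d}$ times an $L^1_w$ integral of $f_0(w, V_{0,t}(x, v(w)))$, producing the pointwise bound $\|S_{\mathrm{data}}(s)\|_{L^\infty} \lesssim \eps_0 (1+s)^{-d}$ directly from $\|f_0\|_{L^1_x L^\infty_v} \leq \eps_0$ in \eqref{small}; in turn, $\|S_{\mathrm{data}}(s)\|_{L^1} \lesssim \|f_0\|_{L^1} \leq \eps_0$ follows from volume preservation of the phase-space flow and Fubini. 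The derivative bounds required for $Y_t^1$ are obtained analogously, differentiating through the flow and exploiting $\|\na_{x,v} f_0\|_{L^1_x L^\infty_v}$ and $\|f_0\|_{W^{1,1}}$ from \eqref{small}; this is carried out in Section \ref{sec-bddata}.

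For $S_{\mathrm{reac}}$ the quadratic gain is extracted through the telescoping
\[ E(s, X_{s,t}) \cdot \na_v \mu(V_{s,t}) - E(s, x-(t-s)v) \cdot \na_v \mu(v) = \mathrm{I} + \mathrm{II}, \]
with $\mathrm{I} = [E(s, X_{s,t}) - E(s, x-(t-s)v)] \cdot \na_v \mu(V_{s,t})$ and $\mathrm{II} = E(s, x-(t-s)v) \cdot [\na_v \mu(V_{s,t}) - \na_v \mu(v)]$. Each difference contributes, via a mean-value inequality, an extra factor of $\eps$ from the displacement of the characteristics on top of the $\eps$-smallness already present in $E$. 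The $v$-integration is absorbed by the integrability of $\na_v \mu$ and $\na_v^2 \mu$ granted by (H1), while the $s$-integration against the pointwise decay of $E$ yields the correct decay in $t$; any logarithmic factor arising from $\int_0^t$ is swallowed both by the quadratic smallness and by the fact that $Y_t^0, Y_t^1$ carry no log weight. This is the content of Section \ref{sec-bdreaction}.

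The main obstacle is to preserve the sharp decay $(1+s)^{-d-1}$ in the $L^\infty$ part of $Y_t^1$ for $\na S_{\mathrm{reac}}$: an $x$-derivative produces terms involving $\na_x X_{s,t}$ and $\na_x V_{s,t}$, whose naive bounds would destroy the required time decay. The straightening change of variables of Section \ref{sec-straightXV} is tailored precisely to this point, enabling the quadratic-in-$\eps$ gain without loss of dispersion. Assembling all estimates and plugging into \eqref{bd-NNN} yields the announced bound $\|S\|_{Y^0_t} + \|S\|_{Y^1_t} \leq C_0 \eps_0 + C_0 \eps^2$ for a constant $C_0$ depending only on $\mu$, $\kappa$ and $d$, closing the bootstrap for $\eps$ small enough.
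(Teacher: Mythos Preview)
Your overall architecture matches the paper: split $S=\mathcal{I}+\mathcal{R}$ into the initial-data and reaction contributions, and treat them separately under the bootstrap. Your handling of $\mathcal{I}=S_{\mathrm{data}}$ is essentially the paper's (Proposition~\ref{keyprop-initial}); the paper uses the straightening $v\mapsto\Psi_{0,t}(x,v)$ followed by $w=x-tv$ rather than $w=X_{0,t}(x,v)$ directly, but these are equivalent.

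There is, however, a genuine gap in your treatment of $S_{\mathrm{reac}}$ at the $Y^0_t$ level. Your term~$\mathrm{I}=[E(s,X_{s,t})-E(s,x-(t-s)v)]\cdot\nabla_v\mu(V_{s,t})$, estimated by the mean-value inequality, produces $|\nabla E(s,\xi)|\,|\X_{s,t}|$ with $\xi$ an intermediate point. Bounding this by $\|\nabla E(s)\|_{L^\infty}\|\X_{s,t}\|_{L^\infty}\lesssim\eps^2\log^2(2+s)/(1+s)^{2d}$ does give the quadratic gain after integration in $(s,v)$, but the resulting bound on $\|\mathrm{I}(t)\|_{L^\infty}$ is $O(\eps^2)$ \emph{uniformly in $t$}, not $O(\eps^2\langle t\rangle^{-d})$ as required by $Y^0_t$. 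The point is that once the argument of $E$ is no longer exactly $x-(t-s)v$, you have lost the free-transport structure that permits the dispersive change of variables $w=x-(t-s)v$ extracting the factor $(t-s)^{-d}$. Term~$\mathrm{II}$ does retain this structure, which is why your sketch works there.

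The paper's remedy (Lemma~\ref{lem-decayR}, Proposition~\ref{propR}) is to apply the straightening $v\mapsto\Psi_{s,t}(x,v)$ of Proposition~\ref{prop-straight} \emph{already at the $Y^0_t$ stage}, not only for the derivative estimates. This rewrites $\mathcal{R}_{\mathrm{NL}}$ so that the field appears as $E(s,x-(t-s)v)$ exactly; the discrepancy with $\mathcal{R}_{\mathrm{L}}$ is then carried entirely by the kernels $H_{0,1}=\nabla_v\mu(v)-\nabla_v\mu(V_{s,t}(x,\Psi_{s,t}))$ and $H_{0,2}=\nabla_v\mu(V_{s,t}(x,\Psi_{s,t}))[1-\det\nabla_v\Psi_{s,t}]$, both of size $\eps(1+|v|)^{-N}\log(2+s)/(1+s)^{d-1}$. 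With the free-transport argument in $E$ intact, the change of variables $w=x-(t-s)v$ then yields both the $L^1$ bound and the $t^{-d}$ decay in $L^\infty$, exactly as in \eqref{TcontL1}--\eqref{TcontLinfty}. In short: the straightening is not merely a convenience for $\nabla S_{\mathrm{reac}}$ but is the mechanism by which the dispersive decay survives the comparison with free transport; your telescoping-plus-mean-value replaces it only for term~$\mathrm{II}$, and term~$\mathrm{I}$ as you describe it does not close.
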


With Theorem~\ref{prop-main} at hand, we can conclude the proof of Theorem~\ref{theomain}.

\begin{proof}[Proof of Theorem~\ref{theomain}]
We   choose $\eps_0$ and $\eps$ small enough so that $\eps_0 \leq \eps \leq \eps_1$ and
\begin{equation}
\label{eq-choiceeps}
 C_0 \eps_0  + C_0 \eps^2 \leq \frac{1}{2M'} \eps,
\end{equation}
where $M'$ is the constant appearing in \eqref{bd-NNN}.

Assume that $T<T^\star$.
We deduce by Theorem~\ref{prop-main}, \eqref{bd-NNN} and~\eqref{eq-choiceeps}
that  
$$\mathcal{N}(T) \leq \frac{1}{2} \eps.$$ 
This is a contradiction with the definition of $T$ as by continuity of $\mathcal{N}$, we would then find $T_1 \in (T,T^\star)$ such that 
$$\mathcal{N}(T_1) \leq  \eps.$$ 
Once we know that $T= T^\star$,  we also get that $T^\star=+\infty$: indeed from  the blow-up criterion of Proposition~\ref{prop-LWP}, 
we cannot have $T^\star<+\infty$ since then $\mathcal{N}(T^\star) \leq \eps$.
 
\end{proof}

\bigskip

The rest of this paper is devoted to the proof of Theorem~\ref{prop-main}. We shall therefore work on the interval $[0,T]$, on which we have $\mathcal{N}(t) \leq \eps$, a property which we will refer to as the bootstrap assumption.
All subsequent estimates will be independent of $T$.

\section{Decay estimates for characteristics}\label{sec-rechar}
In this section, we study the characteristics  $(X_{s,t}(x,v),V_{s,t}(x,v))$ defined as the solutions to the ODEs \eqref{charac}. Integrating \eqref{charac}, we have for all $0\leq s \leq t \leq T$,
\begin{equation}
\label{int-XX} 
\begin{aligned}
X_{s,t}(x,v)  &= x - v(t-s) +  \int_s^t (\tau-s) E \Big(\tau, X_{\tau,t}(x,v)\Big)  \; d\tau
\\
V_{s,t}(x,v)  &= v  -  \int_s^t E \Big(\tau, X_{\tau,t}(x,v)\Big)  \; d\tau.
\end{aligned}
\end{equation}
We have the following pointwise bounds on characteristics. 

\begin{prop}\label{prop-char} There is $\eps_1 \in (0,1)$ such that the following holds for all $\eps\le \eps_1$. For all $0\leq s,t \leq T$ and for $x,v\in \R^d$, the map $x \mapsto X_{s,t}(x,v)$ and $v\mapsto V_{s,t}(x,v)$ are $\mathscr{C}^1$ diffeomorphisms, and we can write\footnote{We have chosen to write the remainder as functions of $(x-vt,v)$, instead of $(x,v)$, in view of the expected large time behavior, which is that of  free transport.} 
\begin{equation}\label{exp-XV} X_{s,t}(x,v)  = x - (t-s)v + \X_{s,t}(x-vt,v), \qquad V_{s,t}(x,v) = v + \V_{s,t}(x-vt,v),\end{equation}
where $\X_{s,t}(x,v), \V_{s,t}(x,v)$ satisfy the following uniform estimates
\begin{equation}
\label{eq-decXV}
\begin{aligned}
 & \sup_{0\leq s\leq t \leq T}  {    1 + s^{ d - 1} \over \log(2 + s)  } ( \|\X_{s,t}\|_{L^\infty_{x,v}} + \|\nabla_{x}\X_{s,t}  \|_{L^\infty_{x,v}} ) +  \sup_{0\leq s\leq t \leq T}  {    1 + s^{ d - 2} \over \log(2 + s)  } \|\nabla_{v}\X_{s,t} \|_{L^\infty_{x,v}} 
 \lesssim
 \eps,\\
 &  
  \sup_{0\leq s \leq t \leq T}  { 1 + s^{d } \over \log ( 2 + s)  }  ( \|\V_{s,t}\|_{L^\infty_{x,v}}  +  \| \nabla_{x} \V_{s,t}\|_{L^\infty_{x,v}} ) +\sup_{0\leq s\leq t \leq T}  {    1 + s^{ d - 1} \over \log(2 + s)  } \| \nabla_{v} \V_{s,t}\|_{L^\infty_{x,v}} \lesssim 
\eps.  
 \end{aligned}
\end{equation}
\end{prop}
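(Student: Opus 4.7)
The plan is as follows. Setting, for $0\leq s\leq t\leq T$,
\[ \X_{s,t}(y,v) := X_{s,t}(y+tv,v) - (y+sv), \qquad \V_{s,t}(y,v) := V_{s,t}(y+tv,v) - v, \]
the decomposition~\eqref{exp-XV} holds by definition. Plugging these into the integral identities~\eqref{int-XX}, the perturbations $\X,\V$ satisfy the coupled equations
\begin{align*}
  \X_{s,t}(y,v) &= \int_s^t (\tau-s)\, E\bigl(\tau,\, y + \tau v + \X_{\tau,t}(y,v)\bigr)\, d\tau, \\
  \V_{s,t}(y,v) &= -\int_s^t E\bigl(\tau,\, y + \tau v + \X_{\tau,t}(y,v)\bigr)\, d\tau.
\end{align*}
An integration by parts in $\tau$ in the first identity, using $\V_{t,t}\equiv 0$, also yields the useful link
\[ \X_{s,t}(y,v) = -\int_s^t \V_{\tau,t}(y,v)\, d\tau, \]
which converts decay of $\V$ into one-power-slower decay of $\X$.

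I would then run a continuity bootstrap: positing that~\eqref{eq-decXV} holds with a fixed large constant $K$ (justified initially by smallness and continuity from $s=t$), I aim to recover the estimates with constant $K/2$. The input on the force field comes from $\mathcal{N}(t)\leq \eps$: since the kernel of $\nabla_x(1-\Delta_x)^{-1}$ lies in $L^1(\R^d)$ (Yukawa screening), Young's inequality transfers the decay of $\rho,\nabla_x\rho$ directly to $E,\nabla_x E$, giving in particular $\|E(\tau)\|_{L^\infty}\lesssim \eps\log(2+\tau)/(1+\tau)^d$ and $\|\nabla_x E(\tau)\|_{L^\infty}\lesssim \eps\log(2+\tau)/(1+\tau)^{d+1}$, together with analogous $L^1$ bounds. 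The target $L^\infty$ bound on $\V_{s,t}$ then follows by plugging these into the $\V$ integral identity and carefully estimating $\int_s^t \|E(\tau)\|_{L^\infty}\,d\tau$ — splitting $[s,t]$ into $[s,2s]\cup [2s,t]$ when $t\geq 2s$ so as to keep only $\log(2+s)$ in the numerator; the bound on $\X_{s,t}$ follows at once from the IBP link. For the space derivatives, differentiating in $y$ pulls down $\nabla_x E(\tau, X_\tau)\cdot (I+\nabla_y\X_{\tau,t})$, and one closes using the faster decay of $\|\nabla_x E\|_{L^\infty}$ together with a simultaneous bootstrap on $\nabla_y\X_{\tau,t}=O(\eps)$. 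Differentiation in $v$ has the same structure but produces an extra factor of $\tau$ from $\partial_v(y+\tau v) = \tau\,\mathrm{Id}$, which accounts for the one-power-slower decay rates in~\eqref{eq-decXV} for the $\nabla_v$ quantities. Finally, the $\C^1$ diffeomorphism claim follows from $\nabla_x X_{s,t} = \mathrm{Id}+O(\eps)$ and $\nabla_v V_{s,t}=\mathrm{Id}+O(\eps)$ via the inverse function theorem.

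The main obstacle is the simultaneous closure of the bootstrap on $\X,\V$ and their derivatives: each integral equation contains $X_\tau = y+\tau v+\X_{\tau,t}$ as the argument of $E$, so every derivative level feeds back into the hierarchy through the Jacobian $\nabla_y X_\tau$. Carefully balancing the weights $(\tau-s)$ and the $\tau$-factors introduced by $\partial_v$ against the decay of $E,\nabla_x E$ — and splitting the time integrals so as not to lose logarithmic factors in $s$ — constitutes the bulk of the technical work.
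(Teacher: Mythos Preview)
Your proposal is correct and follows essentially the same route as the paper: set up the integral equations for $\X_{s,t}$ and $\V_{s,t}$, feed in the decay of $E$ and $\nabla_x E$ coming from the bootstrap assumption $\mathcal N(t)\le\eps$, and close by absorbing the $O(\eps)$ feedback from the Jacobian $\nabla_y\X_{\tau,t}$. The paper differs only in minor organization: it bounds $\X_{s,t}$ directly from its own integral equation rather than via your IBP link $\X_{s,t}=-\int_s^t \V_{\tau,t}\,d\tau$, it uses the slightly sharper $\|E(\tau)\|_{L^\infty}\lesssim\|\nabla_x\rho(\tau)\|_{L^\infty}$ (so $E$ and $\nabla_x E$ share the $(1+\tau)^{-(d+1)}$ rate), and it forgoes the $[s,2s]\cup[2s,t]$ splitting since $\int_s^\infty \tau^{-d}\log(2+\tau)\,d\tau\lesssim s^{-(d-1)}\log(2+s)$ already carries the right logarithm.
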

\begin{proof}
Let $0\leq s\leq t \leq T$.  By definition of $\X_{s,t}$, we have from~\eqref{int-XX} 
\begin{equation}
\label{int-YY} 
\X_{s,t}(x,v)  = \int_s^t (\tau-s) E \Big(\tau, x + \tau v + \X_{\tau,t}(x,v)\Big)  \; d\tau. \end{equation}
Differentiating this identity with respect to $x$ and taking the sup norm in $x$ and $v$, we get 
\begin{equation}
\label{nablaYst1}
\begin{aligned}
 \|\nabla_x \X_{s,t} \|_{L^{\infty}_{x,v}}  
 &\le  \int_s^t (\tau-s)  \|  \nabla_x E(\tau, x + \tau v + \X_{\tau,t}(x,v))\|_{L^\infty_{x,v}}  ( 1 + \|\nabla_x \X_{\tau,t} \|_{L_{x,v}^{\infty}} )\; d\tau
 \\
 &\le  ( 1 + \sup_{0\le \tau \le t} \| \nabla_x \X_{\tau,t} \|_{L_{x,v}^{\infty}})
 \int_s^t  \tau\|  \nabla_x E(\tau)\|_{L^{\infty}}   \; d\tau 
. \end{aligned}
\end{equation}
Next, since  $\nabla_x E = - \nabla_x (1-\Delta_x)^{-1} (\nabla_x\rho)$, we obtain from  \eqref{riesz1}  and the bootstrap assumption
 that
\begin{equation}
\label{estnablaE1} \| E(\tau)\|_{L^\infty}+ \|\nabla_x E(\tau ) \|_{L^\infty} \lesssim \|\nabla_x \rho(\tau) \|_{L^\infty} \lesssim {   \log( 2 + \tau)  \over  1 + \tau^{1+ d } } \eps,
\end{equation}
and thus $  \tau  \|\nabla_x E(\tau ) \|_{L^\infty} $ is integrable in time. 
 This yields
$$  \|\nabla_x \X_{s,t} \|_{L^{\infty}_{x,v}}  \lesssim \eps( 1 +  \|\nabla_x \X_{s,t} \|_{L^{\infty}_{x,v}}).$$
Thus,  for $\eps$ sufficiently small, we have $\|\nabla_x \X_{s,t} \|_{L^{\infty}_{x,v}}  \lesssim \eps$, for all $0\leq s\leq t \leq T$. 
Therefore, taking $\eps$ small enough, we deduce that for all $v\in \R^d$, the map $x \mapsto X_{s,t}(x,v)$  is a $\mathscr{C}^1$ diffeomorphism. As a consequence, for any integrable function $H = H(x)$, we have 
\begin{equation}\label{change-XX}\sup_{0\leq s,t \leq T} \| H(X_{s,t})\|_{L^\infty_vL^1_x} \lesssim \| H\|_{L^1_x}.\end{equation}

We are ready to derive the uniform estimates in \eqref{eq-decXV}. Using the pointwise decay in \eqref{estnablaE1}, it follows directly from~\eqref{int-YY} and \eqref{nablaYst1} that 
 $$ \| \X_{s,t}\|_{L^\infty_{x,v}} +  \| \nabla_{x} \X_{s,t}\|_{L^\infty_{x,v}}  \lesssim    {  \log(2+ s)   \over  1 + s^{d - 1 }} \eps.$$
In addition, we also obtain from \eqref{int-YY} that
$$\begin{aligned}
    \| \nabla_{v} \X_{s,t}\|_{L^\infty_{x,v}}
&  \lesssim  \int_{s}^t (\tau- s) \| \nabla_{x} E (\tau) \|_{L^\infty} (  \tau + \|\nabla_{v }\X_{\tau, t }\|_{L^\infty})\, d\tau
   \\ &\lesssim  \eps \int_{s}^t {   \log(2+ \tau)   \over  1 + \tau^{d    - 1}} \, d\tau +\eps  \int_{s}^t    { \log(2+ \tau)   \over  1 + \tau^{ d }} 
    \|\nabla_{v }\X_{\tau, t }\|_{L^\infty}\, d\tau.
\end{aligned}$$
    In dimension $d \geq 3$, since  ${  \log(2 + \tau)  \over  1 + \tau^{d   - 1}}$ is integrable, we get from the same argument
    as before that 
   \begin{equation}\label{DdvX}   \| \nabla_{v} \X_{s,t}\|_{L^\infty_{x,v}} \lesssim \eps  {   \log(2+s)  \over  1 + s^{ d  - 2}}\end{equation}
    which is particular bounded, giving the claimed estimates on $\X_{s,t}(x,v)$. Similarly, by construction, we have 
$$     \V_{s,t}(x,v)  = - \int_s^t E \Big(\tau, x + \tau v + \X_{\tau,t}(x,v)\Big)  \; d\tau,$$
which first directly yields
$$
  \| \V_{s,t}\|_{L^\infty_{x,v}}
      \lesssim  \eps \int_{s}^t  {   \log(2 + \tau)  \over  1  + \tau^{d + 1 } } \;d\tau \lesssim   \eps { \log(2 + s)  \over 1 + s^{ d  }}.
$$
Moreover, using the estimates already proved for $\X_{s,t}$, we have 
     $$
\begin{aligned}     
      \| \nabla_{x} \V_{s,t}\|_{L^\infty_{x,v}}
      &\lesssim  \eps \int_{s}^t  {   \log(2 + \tau)  \over  1  + \tau^{d + 1 } } ( 1 + \| \nabla_{x} \X_{\tau,t} \|_{L^\infty_{x,v}} )\, d\tau
     \lesssim   \eps { \log(2 + s)  \over 1 + s^{ d  }}.
\\
 \| \nabla_{v} \V_{s,t}\|_{L^\infty_{x,v}}
      &\lesssim  \eps \int_{s}^t  { \tau  \log(2 + \tau) \over  1 + \tau^{d + 1 } } ( 1 + \| \nabla_{v} \X_{\tau,t} \|_{L^\infty_{x,v}} )\, d\tau
     \lesssim   \eps {  \log(2+s)   \over 1 + s^{d  - 1}} .
\end{aligned}    $$
 Imposing again $\eps$ small enough, we deduce that for all $x\in \R^d$, the map $v \mapsto V_{s,t}(x,v)$  is a $\mathscr{C}^1$ diffeomorphism. The proposition follows. \end{proof}

  \begin{rem}
In dimension $2$, derivatives in $v$ of characteristics
  have a slow growth in time (see in particular the estimates leading to \eqref{DdvX}), which prevents from performing the same nonlinear stability analysis. 
   \end{rem}

\section{Straightening characteristics}
\label{sec-straightXV}

In order to boil down to the case of free transport, we shall rely on a change of variables in velocity, that is close to the identity on the interval $[0,T]$.
This is the content of the following proposition. 

\begin{prop}\label{prop-straight} There is $\eps_1 \in (0,1)$ such that the following holds for all $\eps\le \eps_1$. For all $0\leq s,t \leq T$, there exists a $\mathscr{C}^1$ map  $(x,v) \mapsto \Psi_{s,t}(x,v)$ such that
\begin{equation}\label{inverse-V}
X_{s,t}(x,\Psi_{s,t}(x,v))= x  - (t-s) v,
\end{equation}
for all $x,v \in \R^d$. 
In addition,
for every $x\in \R^d $,  $v \mapsto \Psi_{s,t}(x,v)$ is a diffeomorphism, and there hold the following uniform estimates
\begin{equation}\label{eq-Psi3rd}
\begin{aligned}
\sup_{0\leq s,t \leq T}  {1+s^{d } \over \log(2 + s) } \left(  \| \Psi_{s,t}(x,v)-v\|_{L^\infty_{x,v}}
 +    \|\nabla_{x} \Psi_{s,t}(x,v)\|_{L^\infty_{x,v}} \right) & \lesssim \eps, \\
 \sup_{0\leq s,t \leq T}  {1+s^{d -1}\over \log(2+s)} \|\nabla_{v}( \Psi_{s,t}(x,v)-v)\|_{L^\infty_{x,v}}  &\lesssim \eps.
\end{aligned}\end{equation}
\end{prop}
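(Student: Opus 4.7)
The plan is to construct $\Psi_{s,t}$ as the fixed point of a contraction, and then derive the estimates on $\Psi_{s,t}$ and its derivatives by differentiating the defining relation.  Using the expansion $X_{s,t}(x,w) = x - (t-s)w + \X_{s,t}(x - wt, w)$ from Proposition~\ref{prop-char}, the identity $X_{s,t}(x, \Psi_{s,t}(x,v)) = x - (t-s) v$ is equivalent, for $s < t$, to the fixed-point equation
\[
w = v + \frac{1}{t-s} \X_{s,t}(x - wt, w) =: T(w),
\]
while for $s = t$ I simply set $\Psi_{t,t}(x,v) := v$.  The key observation is that, by its integral representation~\eqref{int-YY}, $\X_{s,t}$ inherits an $(t-s)^2$ factor near $s = t$, so that $\X_{s,t}/(t-s)$ remains uniformly controlled.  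Concretely, using $(\tau - s) \leq (t-s)$ in~\eqref{int-YY} and the bootstrap bound $\|E(\tau)\|_{L^\infty} + \|\nabla_x E(\tau)\|_{L^\infty} \lesssim \eps \log(2+\tau)/(1+\tau^{d+1})$ from~\eqref{estnablaE1}, I obtain the uniform estimate
\[
\frac{\|\X_{s,t}\|_{L^\infty}}{t-s} \leq \int_s^t \|E(\tau)\|_{L^\infty}\, d\tau \lesssim \eps \frac{\log(2+s)}{1+s^d}.
\]

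First I will show that $T$ is a contraction on a small ball around $v$.  Differentiating in $w$ shows that the Lipschitz constant of $T$ is bounded by $(t\|\nabla_1 \X_{s,t}\| + \|\nabla_2 \X_{s,t}\|)/(t-s)$.  To bound this quantity by $\eps\log(2+s)/(1+s^{d-1})$, I split into two regimes: in the large-gap regime $t - s \geq s$ the factor $t/(t-s)$ is bounded, and I directly apply the derivative estimates of Proposition~\ref{prop-char}; in the small-gap regime $t - s < s$ (so $t \lesssim 1+s$) I use the refined bound $\|\nabla_i \X_{s,t}\|_{L^\infty} \lesssim (t-s) \int_s^t (\tau + 1)\|\nabla_x E(\tau)\|_{L^\infty}\, d\tau$ obtained by differentiating~\eqref{int-YY} (and using the control on $\nabla \X_{s,t}$ already provided by Proposition~\ref{prop-char} to absorb a lower-order term).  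For $\eps$ small enough, $T$ is a contraction on $\overline B(v, C \eps \log(2+s)/(1+s^d))$, yielding a unique fixed point $\Psi_{s,t}(x,v)$, which satisfies the first bound of~\eqref{eq-Psi3rd} immediately from the estimate above.

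Next I will derive the derivative estimates and the diffeomorphism property.  Differentiating $X_{s,t}(x, \Psi_{s,t}(x,v)) = x - (t-s) v$ in $x$ and $v$ and using $\nabla_x X_{s,t} = I + \nabla_1 \X_{s,t}$ gives
\[
\nabla_v X_{s,t}(x, \Psi)\, \nabla_v \Psi = -(t-s) I, \qquad \nabla_v X_{s,t}(x, \Psi)\, \nabla_x \Psi = -\nabla_1 \X_{s,t}(x - \Psi t, \Psi).
\]
Writing $\nabla_v X_{s,t} = -(t-s)(I - R/(t-s))$ with $R := -t\,\nabla_1 \X_{s,t} + \nabla_2 \X_{s,t}$, the argument of the previous paragraph yields $\|R/(t-s)\|_{L^\infty} \lesssim \eps \log(2+s)/(1+s^{d-1})$, which is small uniformly for $\eps$ small.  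Hence $\nabla_v X_{s,t}$ is invertible, so $v \mapsto \Psi_{s,t}(x,v)$ is a local $\mathscr{C}^1$ diffeomorphism; globally, its inverse is the smooth map $w \mapsto w - \X_{s,t}(x - wt, w)/(t-s)$, which by the same estimate is a small perturbation of the identity.  Neumann-inverting the linear systems above gives $\nabla_v \Psi - I = (I - R/(t-s))^{-1} - I$ and $\nabla_x \Psi = (t-s)^{-1}(I - R/(t-s))^{-1}\,\nabla_1 \X_{s,t}$, from which the last two bounds of~\eqref{eq-Psi3rd} follow by the same small-gap/large-gap case split used to control $R/(t-s)$ and $\nabla_1 \X_{s,t}/(t-s)$.

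The main obstacle is the apparent degeneracy of the leading factor $-(t-s) I$ in $\nabla_v X_{s,t}$ as $s \to t$, which would naively prevent both the inversion step and a uniform bound on $\nabla_v \Psi - I$.  This is bypassed by the observation that $\X_{s,t}$ and each of its first derivatives carry at least one $(t-s)$ factor coming from their integral representations, so that the ratios $\X_{s,t}/(t-s)$ and $\nabla \X_{s,t}/(t-s)$ are in fact regular, and moreover small uniformly in $s,t \in [0,T]$ as soon as $d \geq 3$.  Once this is understood, the rest is a routine case analysis.
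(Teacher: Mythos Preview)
Your approach is essentially the same as the paper's: both construct $\Psi_{s,t}$ by inverting the map $v\mapsto v+\Phi_{s,t}(x,v)$ where $\Phi_{s,t}(x,v)=-\frac{1}{t-s}\X_{s,t}(x-vt,v)$, after checking this map is a $\mathscr{C}^1$ perturbation of the identity. The only notable difference is in how $\|\nabla_v\Phi_{s,t}\|_{L^\infty}$ is bounded: the paper works directly with the integral formula for $\Phi_{s,t}$ and uses the identity $\frac{(\tau-s)(t-\tau)}{t-s}\le \tau-s$ (after writing $t\nabla_x\X_{\tau,t}=(t-\tau)\nabla_x\X_{\tau,t}+\tau\nabla_x\X_{\tau,t}$) to absorb the apparently dangerous $t$-factor in one stroke, whereas you substitute your small-gap/large-gap case split for this algebraic trick; both arguments lead to the same $\eps\log(2+s)/(1+s^{d-1})$ bound.
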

\begin{proof} From \eqref{int-XX}, we write 
$$ X_{s,t}(x,v) = x - (t-s) \Big( v + \Phi_{s,t}(x,v)\Big)$$ 
where 
$$ \Phi_{s,t}(x,v) =  - \frac{1}{t-s} \int_s^t (\tau-s) E \Big(\tau, x -(t-\tau)v + \X_{\tau,t}(x-vt,v)\Big)  \; d\tau .$$
Using \eqref{estnablaE1}, we obtain 
$$  \| \Phi_{s,t}\|_{L^\infty_{x,v}} \lesssim \int_s^t \|E(\tau) \|_{L^\infty_{x,v}}\; d\tau\lesssim 
\eps \int_{s}^t {  \log(2+ \tau) \over 1 + \tau^{ d+ 1 }} \; d\tau
 \lesssim   \eps  { \log(2 + s) \over 1 + s^{ d }}.$$
In addition, using the bounds on $\X_{s,t}(x,v)$ obtained in the previous section, we get 
$$
\begin{aligned} \| \nabla_x\Phi_{s,t}\|_{L^\infty_{x,v}} 
&\lesssim   \frac{1}{t-s} \int_s^t (\tau-s) \|\nabla_xE(\tau)\|_{L^\infty_x}  \Big(1 + \| \nabla_x\X_{\tau,t}\|_{L^\infty_{x,v}}\Big)  \; d\tau \\
&\lesssim  \int_s^t \|\nabla_xE(\tau)\|_{L^\infty_x} \; d\tau \lesssim \eps  { \log(2 + s) \over 1 + s^{ d }},
\end{aligned}$$
using \eqref{estnablaE1}. Finally, we compute 
$$
\begin{aligned} \| \nabla_v\Phi_{s,t}\|_{L^\infty_{x,v}} 
&\lesssim   \frac{1}{t-s} \int_s^t (\tau-s) \|\nabla_xE(\tau)\|_{L^\infty_x}  \Big(t-\tau + \| (\nabla_v - t\nabla_x)\X_{\tau,t}\|_{L^\infty_{x,v}}\Big)  \; d\tau
\\
&\lesssim \int_s^t (1+\tau) \|\nabla_xE(\tau)\|_{L^\infty_x} \;d\tau + \int_s^t\|\nabla_xE(\tau)\|_{L^\infty_x} \| (\nabla_v - \tau \nabla_x)\X_{\tau,t}\|_{L^\infty_{x,v}}  \; d\tau
\end{aligned}$$
in which we have written $t\nabla_x\X_{\tau,t} = (t-\tau)\nabla_x\X_{\tau,t} + \tau \nabla_x\X_{\tau,t}$ and used the boundedness of $\nabla_x \X_{\tau,t}(x,v)$. Therefore, using again \eqref{estnablaE1} and \eqref{eq-decXV}, we have 
$$
\begin{aligned} \| \nabla_v\Phi_{s,t}\|_{L^\infty_{x,v}} 
&\lesssim \eps \int_{s}^t {  \log(2+ \tau) \over 1 + \tau^{ d}} \;d\tau
 \lesssim   \eps  { \log(2 + s) \over 1 + s^{ d -1}} .
\end{aligned}$$

We deduce that the map 
$$
(x,v) \mapsto (x,v + \Phi_{s,t}(x,v))
$$ 
is a $\mathscr{C}^1$ diffeomorphism from $\R^{2d}$ to $\R^{2d}$ with  Jacobian determinant close to one. As a consequence, there exists a $\mathscr{C}^1$ diffeomorphism $\Psi_{s,t}(x,v)$ such that 
$$X_{s,t}(x,\Psi_{s,t}(x,v))= x  - (t-s) v,
$$
for all $x,v$. By construction, note that we have 
\begin{equation}
\label{id-Psi123}
 \Psi_{s,t}(x,v) = v - \Phi_{s,t}(x,\Psi_{s,t}(x,v)).
  \end{equation}
  The estimates~\eqref{eq-Psi3rd} for  $\Psi_{s,t}(x,v)$ thus follow from the estimates already proved for  $\Phi_{s,t}(x,v)$. This ends the proof of the proposition.
\end{proof}

\section{Contribution of the initial data}\label{sec-bddata}

In this section, we estimate the contribution of 
\begin{equation}
\label{def-Ire}
\mathcal{I}(t,x) = \int_{\R^d} f_0(X_{0,t}(x,v) , V_{0,t}(x,v))  \; dv.
\end{equation}
Namely, we prove the following proposition.

\begin{prop}\label{keyprop-initial}
For $0\le t\le T$, there holds 
\begin{equation}
 \|\mathcal{I}(t) \|_{L^1} + \langle t\rangle^d \|\mathcal{I}(t) \|_{L^\infty}  +  \langle t\rangle\|\nabla_x\mathcal{I}(t) \|_{L^1} + \langle t\rangle^{d+1} \|\nabla_x\mathcal{I}(t) \|_{L^\infty}  \le C_0 \eps_0,
\end{equation}
for some positive constant $C_0$. 
\end{prop}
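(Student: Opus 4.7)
I focus on the regime $t\ge 1$; the short-time bound for $t \in [0,1]$ is straightforward, using the weighted pointwise inequality $|\partial^\alpha f_0(x,v)|\lesssim \eps_0 \langle v\rangle^{-k}$ (for $|\alpha|\le 1$) that follows from $\|\langle v\rangle^k f_0\|_{W^{1,\infty}}\le \eps_0$, combined with integrability of $\langle v\rangle^{-k}$ since $k>d$ and with the $L^\infty$ smallness of $\X_{0,t},\V_{0,t}$ from Proposition~\ref{prop-char}.

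The proof for large $t$ relies on two ingredients. First, the flow $(x,v)\mapsto(X_{0,t}(x,v),V_{0,t}(x,v))$ is Hamiltonian, hence volume preserving (since $\mathrm{div}_{(x,v)}(v,E)=0$). Second, the straightening map of Proposition~\ref{prop-straight} gives $X_{0,t}(x,\Psi_{0,t}(x,w))=x-tw$, which is the nonlinear analog of $y=x-tv$. The $L^1$ bound is then immediate:
\[
\|\mathcal{I}(t)\|_{L^1}\le \iint |f_0(X_{0,t},V_{0,t})|\,dv\,dx=\|f_0\|_{L^1}\le \eps_0.
\]
For the $L^\infty$ bound, I substitute $v=\Psi_{0,t}(x,w)$ in~\eqref{def-Ire} to obtain
\[
\mathcal{I}(t,x)=\int_{\R^d} f_0(x-tw,U(x,w))\,J(x,w)\,dw,
\]
with $U(x,w):=V_{0,t}(x,\Psi_{0,t}(x,w))$ and $J(x,w):=|\det\partial_w\Psi_{0,t}(x,w)|$, which is uniformly bounded by~\eqref{eq-Psi3rd}. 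Estimating $f_0$ by $\|f_0(x-tw,\cdot)\|_{L^\infty_v}$ and changing variables $z=x-tw$ yields $\|\mathcal{I}(t)\|_{L^\infty}\lesssim t^{-d}\|f_0\|_{L^1_x L^\infty_v}\lesssim \eps_0\, t^{-d}$.

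The crux is the bound on $\nabla_x\mathcal{I}$. Starting from $X_{0,t}(x,v)=x-tv+\X_{0,t}(x-vt,v)$ and $V_{0,t}(x,v)=v+\V_{0,t}(x-vt,v)$, a direct chain-rule calculation gives the key algebraic identities
\[
\partial_v X_{0,t}+t\,\partial_x X_{0,t}=(\partial_2 \X_{0,t})(x-vt,v),\qquad \partial_v V_{0,t}+t\,\partial_x V_{0,t}=I+(\partial_2 \V_{0,t})(x-vt,v).
\]
Combined with the chain rule applied to $f_0(X_{0,t},V_{0,t})$, this yields
\[
t\,\nabla_x[f_0(X_{0,t},V_{0,t})]=-\nabla_v[f_0(X_{0,t},V_{0,t})]+(\nabla_u f_0)(X_{0,t},V_{0,t})+R(t,x,v),
\]
where $R:=(\nabla_y f_0)\cdot (\partial_2\X_{0,t})+(\nabla_u f_0)\cdot (\partial_2\V_{0,t})$ satisfies the pointwise bound $|R|\lesssim \eps(|\nabla_y f_0|+|\nabla_u f_0|)(X_{0,t},V_{0,t})$, thanks to Proposition~\ref{prop-char}. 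Integrating in $v$ kills the total $\nabla_v$ derivative (the boundary term vanishes by the weighted decay of $f_0$), so
\[
t\,\nabla_x\mathcal{I}(t,x)=\int (\nabla_u f_0)(X_{0,t},V_{0,t})\,dv+\int R(t,x,v)\,dv.
\]
The $L^1$ estimate of the right-hand side follows from measure-preservation and the assumption $\|f_0\|_{W^{1,1}}\le \eps_0$, giving $\|t\nabla_x\mathcal{I}\|_{L^1}\lesssim \eps_0$. The $L^\infty$ estimate is obtained by applying the same straightening substitution $v=\Psi_{0,t}(x,w)$ to both integrals and bounding $f_0$'s derivatives by $L^\infty_v$; this produces $\|t\nabla_x\mathcal{I}\|_{L^\infty}\lesssim t^{-d}\|\nabla_{x,v} f_0\|_{L^1_x L^\infty_v}\lesssim \eps_0\, t^{-d}$, using~\eqref{small}.

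The main conceptual obstacle is identifying the algebraic identity that trades $t\nabla_x$ for $\nabla_v$ at the cost of a remainder that is pointwise of size $O(\eps)$. This is the Lagrangian counterpart of the Bardos--Degond mechanism, made possible here by the additive split $X_{0,t}=x-tv+\X_{0,t}$, $V_{0,t}=v+\V_{0,t}$ and the uniform-in-time $\eps$-smallness of $\partial_2\X_{0,t},\partial_2\V_{0,t}$ from Proposition~\ref{prop-char}. Once this identity is in hand, the rest is bookkeeping: combine measure preservation (for $L^1$ bounds) and the straightening change of variables (for $L^\infty$ bounds) with the hypotheses~\eqref{small} on $f_0$.
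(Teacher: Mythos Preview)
Your proof is correct and takes essentially the same approach as the paper. The only cosmetic difference concerns the derivative estimate: the paper first changes variables $w=x-tv$ in the integral \eqref{def-Ire} and then differentiates in $x$ (so the extra factor $t^{-1}$ appears directly, since $x$ enters only through $(x-w)/t$), whereas you phrase the same mechanism as the vector-field identity $(t\partial_x+\partial_v)X_{0,t}=\partial_2\X_{0,t}$, $(t\partial_x+\partial_v)V_{0,t}=I+\partial_2\V_{0,t}$ and then integrate the total $\nabla_v$-term to zero; the two computations yield the identical integrand and the remaining $L^1/L^\infty$ bookkeeping is the same.
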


\begin{proof}
 Since $$(x,v) \mapsto (X_{0,t}(x,v), V_{0,t}(x,v))$$ is a measure preserving diffeomorphism, we have that
 $$ \| \mathcal{I}(t)\|_{L^1} \leq \| f_{0}\|_{L^1_{x,v}} \leq \eps_{0}.$$ 
 Next, to prove the $L^\infty$ estimate,  
 we aim at getting as close as possible to the dynamics of free transport, which corresponds to $f_0(x-tv,v)$. 
To proceed, we first straighten the characteristics following Proposition~\ref{prop-straight}, yielding
$$
\mathcal{I}(t,x) = \int_{\R^d} f_0(x-tv , V_{0,t}(x,\Psi_{0,t}(x,v))) \det (\na_v \Psi_{0,t}(x,v))  \; dv,
$$
from which we deduce by  using the change of variables $w=x-tv$
\begin{equation}
\label{eq-I-0}
\begin{aligned}
\mathcal{I}(t,x) &=  \int_{\R^d} f_0\left(w, V_{0,t}\left(x,\Psi_{0,t}\left(x,\frac{x-w}{t}\right)\right)\right) \det \left(\na_v \Psi_{0,t}\right)\left(x,\frac{x-w}{t}\right)  \; \frac{dw}{t^d} .
\end{aligned}
\end{equation}
 Then, by using \eqref{eq-Psi3rd} that yields
$$
\|  \det \left(\na_v \Psi_{0,t}\right)\|_{L^\infty_{x,v}} \lesssim 1,
$$
we get that
 $$ \|\mathcal{I}(t) \|_{L^\infty} \lesssim
  { 1 \over t^d} \int_{\mathbb{R}^d}   \|f_{0}(w, \cdot)\|_{L^\infty_{v}} \, dw \lesssim { 1 \over t^d} \eps_{0}.$$
  
  Next, to compute the derivative of $\mathcal{I}$, we shall introduce $(\X_{0,t}, \V_{0,t})$ as defined in \eqref{exp-XV} on the characteristics and use the change of variables $w = x-vt$ to get 
  \begin{equation}
\label{def-Ire2}
\begin{aligned}
\mathcal{I}(t,x) &= \int_{\R^d} f_0\left(x-vt + \X_{0,t}(x-tv,v), v +\V_{0,t}(x-tv,v)\right)  \; dv \\
&= \int_{\R^d} f_0\left(w + \X_{0,t}\left(w,\frac{x-w}{t}\right), \frac{x-w}{t} + \V_{0,t}\left(w,\frac{x-w}{t}\right)\right)  \; \frac{dw}{t^d}.
\end{aligned}
\end{equation}
 Therefore, we obtain that
 $$\begin{aligned} 
  \nabla_{x}\mathcal{I}(t,x)
 = \int_{\mathbb{R}^d} \Bigg[(\nabla_{x} f_{0})& \left(w + \X_{0,t}\left(w,\frac{x-w}{t}\right), \frac{x-w}{t} + \V_{0,t}\left(w,\frac{x-w}{t}\right)\right) 
  \cdot (\nabla_{v} \X_{0,t}) \left(w,\frac{x-w}{t}\right) 
   \\  &\quad+ ~(\nabla_{v} f_{0}) \left(w + \X_{0,t}\left(w,\frac{x-w}{t}\right), \frac{x-w}{t} + \V_{0,t}\left(w,\frac{x-w}{t}\right)\right)  \\
   &\qquad\qquad\qquad\qquad\qquad\qquad\qquad \cdot \left( e + (\nabla_{v} \V_{0,t})
    \left(w,\frac{x-w}{t}\right) \right) \Bigg] \; \frac{dw}{t^{d+1}}, 
    \end{aligned}$$
    where $e$ is the vector $(1,\cdots, 1)^t$.
    
 Consequently, from Proposition \ref{prop-char}, we obtain that
 \begin{equation}
\label{eq-naI}
 |\nabla_{x}\mathcal{I}(t,x)|
  \lesssim \int_{\mathbb{R}^d} \left| \nabla_{x,v} f_{0}\right|  \left(w + \X_{0,t}\left(w,\frac{x-w}{t}\right), \frac{x-w}{t} + \V_{0,t}\left(w,\frac{x-w}{t}\right)\right)   \; \frac{dw}{t^{d+1}}.
\end{equation}
Going back to the original coordinates, we get $$
  |\nabla_{x}\mathcal{I}(t,x)|  
 \lesssim  { 1 \over t }\int_{\mathbb{R}^d} |\nabla_{x, v} f_{0}| (X_{0,t}(x,v) , V_{0,t}(x,v) )  \; dv.
   $$
  By integrating in $x$, this yields
  $$ \|\nabla_{x}  \mathcal{I}(t) \|_{L^1} \lesssim { 1 \over t} \|\nabla_{x,v} f_{0}\|_{L^1}
   \lesssim { \eps_{0} \over t}.$$
   For the $L^\infty$ norm, 
   we use again the straightening change of variables    $ v= \Psi_{0,t}(x, \widetilde v)$ and
    $ w=x- t \widetilde v$ to obtain
 $$   |\nabla_{x}\mathcal{I}(t,x)|  
 \lesssim  { 1 \over t^{ d+1} } \int_{\mathbb{R}^d}  \| \nabla_{x, v} f_{0}(w, \cdot) \|_{L^\infty_v} \, dw
  \lesssim  { \eps_{0} \over t^{ d+1} }.$$

This yields the proposition for the case $t\ge 1$. For $0\le t\le 1$, the estimates are straightforward, using directly the bounds on the characteristics from Proposition \ref{prop-char}.   
\end{proof}

\section{Contribution of the reaction term}\label{sec-bdreaction}

We next turn to the reaction term 
\begin{equation}\label{def-reaction}
\mathcal{R}(t,x) =  \mathcal{R}_{\text{L}}(t,x) - \mathcal{R}_{\text{NL}}(t,x)
\end{equation}
where 
$$ \begin{aligned}
\mathcal{R}_{\text{NL}}(t,x)  &= \int_0^t \int_{\R^d} E(s,X_{s,t}(x,v)) \cdot \na_v \mu(V_{s,t}(x,v))  \, dv  ds,
\\
\mathcal{R}_{\text{L}}(t,x) 
 & = \int_0^t \int_{\R^d} E(s,x-(t-s)v) \cdot \na_v \mu(v) \, dv ds.
 \end{aligned}
$$
In order to estimate $\mathcal{R}$,  we shall first establish the following general fact.
 Let us set for any  given $F$ and $\nu$: 
 $$ \mathcal{T}[F, \nu](t,x)=  -\int_0^t \int_{\R^d} F(s,X_{s,t}(x,v)) \cdot \na_v \nu(V_{s,t}(x,v))  \, dv  ds
 +  \int_0^t \int_{\R^d} F(s,x-(t-s)v) \cdot \na_v \nu(v) \, dv ds.$$
 Note that in particular, we have  $\mathcal{R}(t,x)= \mathcal{T}[E, \mu](t,x).$  We have
 \begin{lem}
 \label{lem-decayR}
 Assume that $\nu \in W^{3,\infty}_v$ with
\begin{equation}
\label{decay-nu}
 |\pa^\alpha_v \na \nu| \lesssim \frac{1}{\langle v \rangle^N}, \quad |\alpha|=1,2,
\end{equation}
 for some $N>d$.
  We have the decomposition
  \begin{equation} 
  \label{TFdec} \mathcal{T}[F, \nu](t,x)=   \mathcal{T}_{1}[F, \nu](t,x)+  \mathcal{T}_{2}[F, \nu](t,x),
  \end{equation}
   where for $j=1, \, 2$, we have
$$   \mathcal{T}_{j} [F, \nu](t,x)=  \int_0^t \int_{\R^d} F(s,x-(t-s)v) \cdot H_{0,j}[\nu] (s,t,x,v),$$
 in which 
\begin{equation}\label{def-H0j}
\begin{aligned}
H_{0,1}[\nu](s,t,x,v) &=  \na_v \nu(v)  - \na_v \nu(V_{s,t}(x,\Psi_{s,t}(x,v))) 
 \\
H_{0,2}[\nu](s,t,x,v)  &= \na_v \nu(V_{s,t}(x,\Psi_{s,t}(x,v)))\Big[ 1- \det (\na_v \Psi_{s,t}(x,v))  \Big].
\end{aligned}
\end{equation}
Moreover, for  $0 \leq s \leq t \leq T$, the kernels enjoy the uniform  estimate
\begin{equation}
\label{kernel0} |H_{0,j}[\nu] (s,t,x,v)| \lesssim { \eps \over { (1 + |v|)^N}} { \log (2 + s) \over 1+ s^{ d -1}}
\end{equation}
and there exists $C$ such that for every $F$ and for every $t \in (0, T]$, 
\begin{align}
\label{TcontL1}
&  \| \mathcal{T}_{j}[F, \nu](t)\|_{L^1}  \leq C \eps \sup_{[0, t]}{\|F(s) \|_{L^1} \over \log(2+s)} , \\
&  \label{TcontLinfty} \| \mathcal{T}_{j}[F, \nu](t)\|_{L^\infty} \leq { C \eps \over t^d} \sup_{[0, t]}\left( {\|F(s) \|_{L^1} \over \log(2+s)}
  +   { 1+s^{d}\|F(s) \|_{L^\infty} \over \log(2+s)} \right).
  \end{align} 
 \end{lem}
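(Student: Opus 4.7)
The plan is to first use the straightening change of variables $v \mapsto \Psi_{s,t}(x,v)$ in the first of the two integrals defining $\mathcal{T}[F,\nu]$. By the defining identity \eqref{inverse-V}, this turns $F(s, X_{s,t}(x,v))$ into $F(s, x-(t-s)v)$ at the cost of introducing the Jacobian $\det(\na_v \Psi_{s,t}(x,v))$ and changing the argument of $\na_v\nu$ to $V_{s,t}(x,\Psi_{s,t}(x,v))$. Putting both terms of $\mathcal{T}$ under the same integral and writing
$$
\na_v\nu(v) - \na_v\nu(V_{s,t}(x,\Psi_{s,t}))\det(\na_v\Psi_{s,t}) = \bigl[\na_v\nu(v) - \na_v\nu(V_{s,t}(x,\Psi_{s,t}))\bigr] + \na_v\nu(V_{s,t}(x,\Psi_{s,t}))\bigl[1 - \det(\na_v\Psi_{s,t})\bigr]
$$
yields the decomposition \eqref{TFdec} with $H_{0,1}, H_{0,2}$ as defined in \eqref{def-H0j}. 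Smallness of $\eps$ ensures the Jacobian is positive so the absolute value can be dropped.

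For the pointwise bound~\eqref{kernel0} on $H_{0,1}$, the mean value theorem combined with the decay hypothesis~\eqref{decay-nu} gives $|H_{0,1}| \lesssim (1+|v|)^{-N}\, |V_{s,t}(x,\Psi_{s,t}(x,v)) - v|$, using that $\eps$ is small enough so the shift remains in a fixed neighborhood of $v$. Writing $V_{s,t}(x,w) = w + \V_{s,t}(x-tw,w)$ together with the bounds $\|\Psi_{s,t}(x,\cdot)-\mathrm{Id}\|_\infty \lesssim \eps \log(2+s)/(1+s^d)$ from Proposition~\ref{prop-straight} and $\|\V_{s,t}\|_\infty \lesssim \eps \log(2+s)/(1+s^d)$ from Proposition~\ref{prop-char} gives even better than~\eqref{kernel0}. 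For $H_{0,2}$, the factor $|\na_v\nu(V_{s,t}(x,\Psi_{s,t}))| \lesssim (1+|v|)^{-N}$ is estimated as above, while $|1-\det(\na_v\Psi_{s,t})| \lesssim \|\na_v(\Psi_{s,t}-v)\|_\infty \lesssim \eps \log(2+s)/(1+s^{d-1})$ by Proposition~\ref{prop-straight}, which is the limiting estimate that fixes the decay rate in \eqref{kernel0}.

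To prove~\eqref{TcontL1}, I integrate $|F(s,x-(t-s)v)|\,|H_{0,j}|$ in $x$ by taking the $L^\infty_x$ norm of $H_{0,j}$, then in $v$ (finite since $N>d$), factor out $\sup_{[0,s]} \|F\|_{L^1}/\log(2+s)$, and conclude by time integration using that $\log^2(2+s)/(1+s^{d-1})$ is integrable for $d\geq 3$. For~\eqref{TcontLinfty}, I split the $s$-integral at $t/2$. On $[0,t/2]$ the change of variables $w=x-(t-s)v$ extracts the factor $(t-s)^{-d}\lesssim t^{-d}$, the remaining $v$-dependence of $H_{0,j}$ being bounded by $1$, and one uses the $L^1$ control of $F$ together with time integrability. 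On $[t/2,t]$ I bound $|F|$ in $L^\infty$, integrate the factor $(1+|v|)^{-N}$ in $v$, and use $\|F(s)\|_{L^\infty} \leq \frac{\log(2+s)}{1+s^d} \sup_{[0,t]} \frac{(1+s^d)\|F\|_{L^\infty}}{\log(2+s)}$.

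The main obstacle is obtaining the sharp $t^{-d}$ rate in~\eqref{TcontLinfty} on the regime $s\in[t/2,t]$, where no smallness can be extracted from $(t-s)^{-d}$ and the saving must come entirely from the combined temporal decay of $H_{0,j}$ and of $F$. The remaining time integral is bounded by a constant multiple of $\log^2(2+t)/t^{2d-2}$, which is $O(t^{-d})$ precisely because $d\geq 3$; in dimension $d=2$ this step would break down, consistently with the remark after Proposition~\ref{prop-char}.
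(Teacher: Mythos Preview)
Your proof is correct and follows essentially the same approach as the paper: the same straightening change of variables $v\mapsto\Psi_{s,t}(x,v)$ to obtain the decomposition, the same mean-value argument combined with Propositions~\ref{prop-char} and~\ref{prop-straight} for the kernel bounds, and the same $s$-splitting at $t/2$ for the $L^\infty$ estimate. Your explicit identification of the $[t/2,t]$ regime as the place where the restriction $d\ge 3$ enters (via integrability of $\log^2(2+s)/(1+s^{d-1})$) is a useful remark that the paper leaves implicit.
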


\begin{proof}
Using  the change of variables provided by   Proposition~\ref{prop-straight}, we obtain
$$
\begin{aligned}
\mathcal{T}[F, \nu](t,x ) &= - \int_0^t \int_{\R^d} F(s,x-(t-s)v) \cdot \na_v \nu(V_{s,t}(x,\Psi_{s,t}(x,v))) \det (\na_v \Psi_{s,t}(x,v)) \, dv  ds
 \\&\quad +   \int_0^t \int_{\R^d} F(s,x-(t-s)v) \cdot \na_v \nu(v) \, dv ds.
\end{aligned}$$
The decomposition \eqref{TFdec} follows. Moreover, for  $0 \leq s \leq t \leq T$, the kernels enjoy the estimate \eqref{kernel0}.  
This follows from  Propositions \ref{prop-char} and \ref{prop-straight}: by the mean value inequality, \eqref{decay-nu} and~\eqref{eq-decXV}--\eqref{eq-Psi3rd}, we have
\begin{align*}
|H_{0,1}[\nu](s,t,x,v)| &\lesssim \frac{1}{\langle v \rangle^N} | v- V_{s,t}(x,\Psi_{s,t}(x,v))| \\
&\lesssim \frac{1}{\langle v \rangle^N} \left[| v- V_{s,t}(x,v)|  + | V_{s,t}(x,v)- V_{s,t}(x,\Psi_{s,t}(x,v))| \right] \\
&\lesssim \frac{1}{\langle v \rangle^N} \left[| v- V_{s,t}(x,v)|  +  \|\na_v V_{s,t}\|_{L^\infty_{x,v}} |v-\Psi_{s,t}(x,v)| \right] \\
&\lesssim  { \eps \over {\langle v \rangle^N} }{ \log (2 + s) \over 1+ s^{ d}}.\end{align*}
We handle $H_{0,2}$ similarly.

Next, using the change of variable $w= x- (t-s)v$ and the bounds on the kernels, we obtain 
$$ |\mathcal{T}_j[F, \nu](t,x)| \lesssim \eps \int_{0}^t \int_{\mathbb{R}^d}
 |F(s, w) | \, \left(1 + \left|{x-w \over t-s}\right| \right)^{-N} { \log (2 + s) \over 1+s^{ d -1}}
  {dw ds \over (t-s)^d}.$$
  This yields
 $$ \|\mathcal{T}_j[F,\nu](t) \|_{L^1} \lesssim \eps \int_{0}^t \|F(s) \|_{L^1} { \log (2 + s) \over 1+s^{ d -1}}\, ds
  \lesssim  \eps \sup_{[0,t]} {\|F(s)\|_{L^1} \over \log (2 + s)}   \int_{0}^t { \log^2(2 + s) \over 1+s^{ d -1}}\, ds,$$
   hence the $L^1$ estimate.
   
   For the $L^\infty$ norm, we use
   $$ \|\mathcal{T}_j[F, \nu](t) \|_{L^\infty} \lesssim   { \eps \over t^d} \int_{0}^{t \over 2} \|F(s) \|_{L^1} { \log (2 + s) \over 1+s^{ d -1}}
   \, ds + \eps \int_{t\over 2}^t  \|F(s) \|_{L^\infty}  { \log (2 + s) \over 1+s^{ d -1}}\, ds,
   $$
   which gives
   $$   \|\mathcal{T}_j[F, \nu](t) \|_{L^\infty}
    \lesssim { \eps \over t^d}\left(  \sup_{[0, t/2]} {\| F \|_{L^1} \over \log(2+s)} + \sup_{[t/2, t]}{ 1+s^d \over \log(2+s)} \|F(s) \|_{L^\infty}  \right) \int_{0}^t { \log^2 (2 + s) \over 1+s^{ d -1}}\, ds.$$
 This concludes the proof of the lemma.
  \end{proof}
  
   As an application of Lemma~\ref{lem-decayR}, we can now derive appropriate estimates for $\mathcal{R}$.
 \begin{prop}
 \label{propR}
  For all $0 \leq t \leq T$,
  $$ \|\mathcal{R}(t) \|_{L^1} + (1+t^d) \|\mathcal{R}(t) \|_{L^\infty} \lesssim \eps^2.$$
 \end{prop}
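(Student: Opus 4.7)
The plan is to view this as a direct application of Lemma \ref{lem-decayR} with $F = E$ and $\nu = \mu$, combined with the bootstrap control on $\rho$ (and hence on $E$).

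First, I would check that the hypothesis \eqref{decay-nu} is satisfied for $\nu = \mu$. This follows from (H1): since $\langle v\rangle^k \nabla_v \mu \in W^{2,\infty}$ with $k > d$, every component of $\partial_v^\alpha \nabla_v \mu$ (for $|\alpha| \leq 2$) is pointwise bounded by $C \langle v\rangle^{-k}$ with $k > d$. Hence, by Lemma \ref{lem-decayR}, one has $\mathcal{R} = \mathcal{T}_1[E,\mu] + \mathcal{T}_2[E,\mu]$, with the corresponding kernel estimate \eqref{kernel0} and the bounds \eqref{TcontL1}--\eqref{TcontLinfty} available.

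Next, since $E = -\nabla_x(1-\Delta_x)^{-1}\rho$, the kernel of $\nabla_x(1-\Delta_x)^{-1}$ (the gradient of the Bessel potential) lies in $L^1(\R^d)$, so by Young's inequality one gets
\[
\|E(s)\|_{L^1} \lesssim \|\rho(s)\|_{L^1}, \qquad \|E(s)\|_{L^\infty} \lesssim \|\rho(s)\|_{L^\infty}.
\]
Combining this with the bootstrap assumption $\mathcal{N}(t) \leq \eps$ (which yields $\|\rho(s)\|_{L^1} \leq \eps \log(2+s)$ and $\langle s\rangle^d \|\rho(s)\|_{L^\infty} \leq \eps\log(2+s)$ for all $s \leq t$), I would deduce
\[
\sup_{[0,t]} \frac{\|E(s)\|_{L^1}}{\log(2+s)} \lesssim \eps, \qquad \sup_{[0,t]} \frac{(1+s^d)\|E(s)\|_{L^\infty}}{\log(2+s)} \lesssim \eps.
\]

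Plugging these into \eqref{TcontL1} and \eqref{TcontLinfty} applied to $F = E$ immediately gives, for each $j \in \{1,2\}$,
\[
\|\mathcal{T}_j[E,\mu](t)\|_{L^1} \lesssim \eps^2, \qquad \|\mathcal{T}_j[E,\mu](t)\|_{L^\infty} \lesssim \frac{\eps^2}{t^d},
\]
for $t \in (0,T]$. Summing in $j$ produces the claimed estimates for $t \geq 1$; the range $0 \leq t \leq 1$ is handled directly from the definition of $\mathcal{R}$ using the uniform bounds on the characteristics from Proposition \ref{prop-char} together with $\|E(s)\|_{L^\infty} \lesssim \|\rho(s)\|_{L^\infty} \lesssim \eps$. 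There is no genuine obstacle here: the delicate change-of-variable and cancellation work is entirely encapsulated in Lemma \ref{lem-decayR}, so this proposition is essentially a bookkeeping step once the elliptic bound on $E$ and the bootstrap hypothesis on $\rho$ are invoked.
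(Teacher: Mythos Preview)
Your proposal is correct and follows essentially the same approach as the paper: apply Lemma~\ref{lem-decayR} with $F=E$, $\nu=\mu$, use $\|E(s)\|_{L^p}\lesssim\|\rho(s)\|_{L^p}$ together with the bootstrap assumption to control the right-hand sides of \eqref{TcontL1}--\eqref{TcontLinfty}, and treat $0\le t\le 1$ directly via Proposition~\ref{prop-char}. The paper's own proof is exactly this argument, stated more tersely.
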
 
 \begin{proof}
  Since we have $\mathcal{R}(t,x)= \mathcal{T}[E, \mu](t,x)$, the estimate for $1\leq t \leq T$ follows from (H1), \eqref{TcontL1}, 
   \eqref{TcontLinfty}, the fact that
   $$ \|E(t) \|_{L^p} \lesssim \|\rho(t)\|_{L^p}, $$
for $p\in [1,\infty]$, and the bootstrap assumption. The estimates are straightforward for $0\le t\le 1$, bounding directly \eqref{def-reaction} using the bounds on the characteristics from Proposition \ref{prop-char}. 
 \end{proof}
 We shall now estimate derivatives of $\mathcal{R}$.
 \begin{prop}
 \label{propnablaR}
  There exists $C>0$ such that for every $0 \leq t \leq T$
  $$ (1+t) \|\nabla \mathcal{R}(t) \|_{L^1} + (1+ t^{d+1}) \|\nabla\mathcal{R}(t) \|_{L^\infty} \leq C \eps^2.$$
 \end{prop}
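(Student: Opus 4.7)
The plan is to follow the template used in the proof of Proposition~\ref{propR}, but with the extra $x$-derivative carefully distributed. Writing $\mathcal{R} = \mathcal{T}_1[E,\mu] + \mathcal{T}_2[E,\mu]$ using Lemma~\ref{lem-decayR}, each piece has the form $\mathcal{T}_j[E,\mu](t,x) = \int_0^t \int E(s,x-(t-s)v) \cdot H_{0,j}(s,t,x,v)\, dv\, ds$, so when we apply $\nabla_x$, the derivative falls either on the transported field $E(s,x-(t-s)v)$ or on the kernel $H_{0,j}(s,t,x,v)$. Short-time bounds ($0 \le t \le 1$) follow directly from Propositions~\ref{prop-char}--\ref{prop-straight}, as in Proposition~\ref{propR}, so I focus on $t \ge 1$.

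The clean way to proceed is to use the pointwise identity $\nabla_x[E(s,x-(t-s)v)] = -\frac{1}{t-s}\nabla_v[E(s,x-(t-s)v)]$ (valid for $s<t$) and integrate by parts in $v$: since $H_{0,j}$ decays in $v$ by (H1), the boundary term vanishes and we obtain
\[
\nabla_x \mathcal{T}_j[E,\mu](t,x) = \int_0^t \int_{\R^d} E(s,x-(t-s)v)\cdot\Big[\nabla_x H_{0,j}(s,t,x,v) + \tfrac{1}{t-s}\nabla_v H_{0,j}(s,t,x,v)\Big]\, dv\, ds.
\]
The key kernel bounds are
\[
|\nabla_x H_{0,j}(s,t,x,v)| \lesssim \frac{\eps \log(2+s)}{(1+s^d) \langle v\rangle^{N}}, \qquad |\nabla_v H_{0,j}(s,t,x,v)| \lesssim \frac{\eps \log(2+s)}{(1+s^{d-1}) \langle v\rangle^{N}},
\]
the first gaining one power of $s$ compared to \eqref{kernel0}, reflecting that $\nabla_x$ on characteristics decays like $\log(2+s)/(1+s^d)$ rather than $\log(2+s)/(1+s^{d-1})$; see Propositions~\ref{prop-char} and~\ref{prop-straight}. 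Both estimates are obtained by chain-rule differentiation of the defining formulas~\eqref{def-H0j} and insertion of the decay bounds from Sections~\ref{sec-rechar}--\ref{sec-straightXV}.

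To manage the $1/(t-s)$ singularity, I would split the time integral at $s=t/2$. Over $[t/2,t]$, instead of the IBP form I use the pre-IBP expression $\int \nabla E(s, x-(t-s)v)\cdot H_{0,j}\, dv\, ds + \int E\cdot \nabla_x H_{0,j}\, dv\, ds$, and estimate exactly as in Lemma~\ref{lem-decayR} with the bootstrap bounds on $\|\nabla\rho(s)\|_{L^p}$; since the interval has length $\sim t$ and every integrand carries an additional power of $(1+s)^{-1}$ coming from $\nabla\rho$, the factors $1/(1+t)$ in $L^1$ and $1/(1+t^{d+1})$ in $L^\infty$ emerge. Over $[0,t/2]$, I use the IBP form with $1/(t-s)\le 2/t$; the extra $2/t$ converts the $\nabla_v H_{0,j}$ bound into one that gives the desired extra $1/t$ decay when the change-of-variable computation of Lemma~\ref{lem-decayR} is carried out.

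The main obstacle is establishing the sharp bound on $\nabla_x H_{0,2}$, which requires controlling $\nabla_x \nabla_v \Psi_{s,t}$---a second derivative of the straightening map not produced directly by Proposition~\ref{prop-straight}. I would obtain it by differentiating the defining identity~\eqref{id-Psi123} once more, and using an auxiliary bound on $\nabla_x \nabla_v \X_{s,t}$ obtained by re-differentiating~\eqref{int-XX} and repeating the fixed-point argument of Section~\ref{sec-rechar}, the key input being the bootstrap bound on $\nabla_x E$ from~\eqref{estnablaE1}. Once this is in place, everything reduces to the same kind of change-of-variable manipulations already carried out in Lemma~\ref{lem-decayR} and Proposition~\ref{propR}.
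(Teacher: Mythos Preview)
Your route differs from the paper's and contains a real gap. The issue is precisely where you flag it: to bound $\nabla_x H_{0,2}$ (and equally $\nabla_v H_{0,2}$) you must differentiate $\det(\nabla_v\Psi_{s,t})$, which forces second derivatives of $\Psi_{s,t}$, hence of $\Phi_{s,t}$ and of the characteristics $\X_{s,t}$. Differentiating \eqref{int-YY} twice produces a term $\nabla_x^2 E(\tau,\cdot)$, not just $\nabla_x E$; so the ``key input'' you cite, namely \eqref{estnablaE1}, is insufficient. The bootstrap norm $\mathcal{N}(t)$ only controls $\rho$ and $\nabla\rho$ in $L^1\cap L^\infty$, and $\nabla_x^2 E = -\nabla_x^2(1-\Delta_x)^{-1}\nabla_x\rho$ involves a zero-order Calder\'on--Zygmund operator acting on $\nabla\rho$, which is \emph{not} bounded on $L^\infty$ (nor on $L^1$). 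There is therefore no way, within the present bootstrap, to obtain the pointwise bound on $\nabla_x\nabla_v\Psi_{s,t}$ that your scheme needs.

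The paper sidesteps this entirely by a different order of operations: it goes back to the \emph{unstraightened} expression for $\mathcal{R}_{\mathrm{NL}}$, performs the $s$-independent change of variable $w=x-tv$ (so the argument of $E$ becomes $w+\tfrac{s}{t}(x-w)+\X_{s,t}(w,\tfrac{x-w}{t})$), and only then applies $\partial_{x_j}$. This has two effects. First, when $\partial_{x_j}$ hits $E$ through the linear part of the argument it produces $\tfrac{s}{t}\,\partial_j E$, giving the extra $t^{-1}$ without any $(t-s)^{-1}$ singularity or integration by parts. Second, when it hits the nonlinear pieces $\X_{s,t},\V_{s,t}$ it yields only \emph{first} $v$-derivatives of these, already available from Proposition~\ref{prop-char}. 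The resulting ``linear-looking'' pieces are then recognized as $\tfrac{1}{t}\mathcal{T}[s\partial_j E,\mu]$ and $\tfrac{1}{t}\mathcal{T}[E,\partial_j\mu]$ and handled by Lemma~\ref{lem-decayR}, while the remaining terms carry an explicit factor $\partial_v\X_{s,t}$ or $\partial_v\V_{s,t}\sim\eps$ and are estimated directly. No second derivatives of characteristics or of the straightening map ever appear.
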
 
 \begin{proof} The estimates for $0\le t\le 1$ follow directly from \eqref{def-reaction} and the bounds on the characteristics from Proposition \ref{prop-char}. We therefore focus on the case  $t\ge 1$. We shall first  express $\nabla \mathcal{R}_{\text{NL}}$ in an appropriate way. Using \eqref{exp-XV}, we have
  \begin{align*} 
\mathcal{R}_{\text{NL}}(t,x )  & =\int_0^t \int_{\R^d} E(s, \X_{s,t}( x- tv, v)+ x-tv + sv)
 \cdot \nabla_v\mu (\V_{s,t}(x-tv, v) + v) \, dv  ds
\\
& =  \int_0^t \int_{\R^d} E\left(s,  \X_{s,t}(w,  { x- w \over t}) + w + {s \over t} (x-w)\right)   \cdot \nabla_v\mu \left(\V_{s,t}(w, {x- w \over t}) + { x- w \over t}\right) \, {dw  ds \over t^d}.
 \end{align*}
By taking the gradient in $x$, we next obtain that
$$
\partial_{j} \mathcal{R}_{\text{NL}}(t,x ) = \mathcal{R}_{j}^1 + \mathcal{R}_{j}^2 +\mathcal{R}_{j}^3 + \mathcal{R}_{j}^4,
$$
where
\begin{align*}
    \mathcal{R}_{j}^1    &=    \int_{0}^t \int_{\mathbb{R}^d}
   s \partial_{j} E  \left(s,  \X_{s,t}(w,  { x- w \over t}) +w+ {s \over t} (x-w)\right) \cdot   \nabla_v\mu \left(\V_{s,t}(w, {x- w \over t}) + { x- w \over t}\right) \, {dw  ds \over t^{d+1}}, \\
  \mathcal{R}_{j}^2    &=    \int_{0}^t \int_{\mathbb{R}^d}
 E \cdot \partial_{v_{j}} \nabla_v\mu  \left(\V_{s,t}(w, {x- w \over t}) + { x- w \over t}\right) \, {dw  ds \over t^{d+1}}, \\
\mathcal{R}_{j}^3 & = \int_{0}^t \int_{\mathbb{R}^d}  \left(\partial_{v_{j}} \X_{s,t} (w,  { x- w \over t}) \cdot 
  \nabla_x E \left(s,  \X_{s,t}(w,  { x- w \over t}) +w + {s \over t} (x-w)\right) \right)  \\
 &  \qquad\qquad\qquad\qquad\qquad  \qquad\qquad\qquad\qquad\qquad    \cdot 
   \nabla_v\mu \left(\V_{s,t}(w, {x- w \over t}) + { x- w \over t}\right) \, {dw  ds \over t^{d+1}}, \\
 \mathcal{R}_{j}^4   &  =    \int_{0}^t \int_{\mathbb{R}^d} E\left(s,  \X_{s,t}(w,  { x- w \over t}) +w+ {s \over t} (x-w)\right)
 \\
 &    \qquad\qquad\qquad\qquad\qquad 
  \cdot \left( \partial_{v_{j}} \V_{s,t} (w,  { x- w \over t}) \cdot \nabla_v \right) \nabla_v\mu \left(\V_{s,t}(w, {x- w \over t}) + { x- w \over t}\right)
    {dw  ds \over t^{d+1}}.
\end{align*}
We can now estimate directly  $\mathcal{R}_{j}^3$, $\mathcal{R}_{j}^4$ that are indeed nonlinear terms.
  Going back to the $v$ variable and recalling \eqref{exp-XV}, we have that
  $$\mathcal{R}_{j}^3  = \int_{0}^t \int_{\mathbb{R}^d} \left( \partial_{v_{j}} \X_{s,t} (x-vt,  v) \cdot 
  \nabla_x E \left(s, X_{s,t}(x,v) \right)  \right)  \cdot \nabla_v\mu \left(V_{s,t}(x,v)\right) \, {dv  ds \over t}.$$
From \eqref{eq-decXV}, we then obtain the pointwise estimate
$$
\begin{aligned}
| \mathcal{R}_{j}^3(t,x)| &\lesssim \eps \int_{0}^t   \int_{\mathbb{R}^d} \left| \nabla_x E (s, X_{s,t}(x,v))\right|
 |\na_v \mu(V_{s,t}(x,v)|  { \log(2+ s) \over 1+ s^{d-2}} {dv  ds \over t}
 \\&\lesssim \eps \int_{0}^t  \|\nabla_x E (s)\|_{L^\infty} { \log(2+ s) \over 1+ s^{d-2}}   \int_{\mathbb{R}^d} 
 |\na_v \mu(V_{s,t}(x,v)|  {dv  ds \over t} .
\end{aligned} $$
By Proposition~\ref{prop-char}, for all $x \in \R^d$, $v \mapsto V_{s,t}(x,v)$ is a $\mathscr{C}^1$ diffeomorphism and therefore by~\eqref{eq-decXV} and (H1),
$$
\sup_x \int_{\mathbb{R}^d} 
 |\na_v \mu(V_{s,t}(x,v)|  dv \lesssim  1.
$$
By  \eqref{riesz1} and the bootstrap assumption, we have
$$
 \|\nabla_x E (s)\|_{L^1}   \lesssim  \|\nabla_x \rho (s)\|_{L^1}  \lesssim \eps{ \log(2+ s) \over 1+ s}
$$
and thus the above yields
  $$  \| \mathcal{R}_{j}^3(t)\|_{L^1} \lesssim { \eps \over t} \int_{0}^t   \|\nabla_x E (s)\|_{L^1}  { \log(2+ s) \over 1+ s^{d-2}}  \; ds \lesssim { \eps^2 \over t} \int_{0}^t     { \log^2(2+ s) \over 1+ s^{d-1}}  \; ds
   \lesssim { \eps^2 \over t} .$$
 For the $L^\infty$ norm, we use again (H1) and the change of variable given by 
 Proposition \ref{prop-straight} to obtain that
 $$ | \mathcal{R}_{j}^3(t,x)| \lesssim {\eps \over t} \int_{0}^t   \int_{\mathbb{R}^d} \left|  \nabla_x E (s, w)\right| 
\left(1 + \left|{x-w \over t-s}\right| \right)^{-N} { \log (2 + s) \over 1+ s^{d-2}}
  {dw ds \over (t-s)^d}.$$
Using the bootstrap assumption, we find
  $$
  \begin{aligned}
     | \mathcal{R}_{j}^3(t,x)| 
     &\lesssim { \eps \over t^{d+1}}  \int_{0}^{t \over 2}
   \| \nabla_x E (s)\|_{L^1} \,   { \log (2 + s) \over 1+ s^{d-2}}
    +{\eps \over t} \int_{t \over 2}^t  \| \nabla_x E(s) \|_{L^\infty}    { \log (2 + s) \over  1+ s^{d-2}}
    \\
    &\lesssim { \eps^2 \over t^{d+1}}  \int_{0}^{t \over 2}
 { \log^2 (2 + s) \over 1+ s^{d-1}}
     \lesssim { \eps^2\over t^{d+1}}.
     \end{aligned}$$
     The estimates for $\partial_{j}\mathcal{R}_{j}^4$ are obtained in the same way.
     
     We shall now estimate $ \mathcal{R}_{j}^{1},$ $  \mathcal{R}_{j}^{2}, $
      together with $\partial_{j} \mathcal{R}_{L}$. Recalling \eqref{exp-XV} and going back to the variable $v= (x-w)/t$, we write 
    \begin{align*}
     &   \mathcal{R}_{j}^1     =    \int_{0}^t \int_{\mathbb{R}^d}
   s \partial_{j} E  \left(s,  X_{s,t}(x,  v)\right) \cdot   \nabla_v\mu \left(V_{s,t}(x,v)\right) \, {dw  ds \over t}, \\
&  \mathcal{R}_{j}^2    =    \int_{0}^t \int_{\mathbb{R}^d}
 E \left(s,  X_{s,t}(x,  v)\right) \cdot \partial_{v_{j}} \nabla_v\mu  \left(V_{s,t}(x,v)\right) \, {dw  ds \over t}.
\end{align*}
Next, using  again the straightening change of variable of Proposition \ref{prop-straight}, we obtain
  \begin{align*}
     &   \mathcal{R}_{j}^1     =    \int_{0}^t \int_{\mathbb{R}^d}
   s \partial_{j} E  \left(s,  x-(t-s)v \right) \cdot   \nabla_v\mu \left(V_{s,t}(x,\Psi_{s,t}(x,v)\right)
   \mbox{det}(\nabla_{v} \Psi_{s,t}(x,v)) \, {dv  ds \over t}, \\
&  \mathcal{R}_{j}^2    =    \int_{0}^t \int_{\mathbb{R}^d}
 E \left(s,  x-(t-s)v\right) \cdot \partial_{v_{j}} \nabla_v\mu  \left(V_{s,t}(x,\Psi_{s,t}(x,v)\right)
  \mbox{det}(\nabla_{v} \Psi_{s,t}(x,v)) 
  \, {dv  ds \over t}.
\end{align*}
On the other hand, using also the change of variable $w = x- t v$, we obtain that 
 $$  \mathcal{R}_{\text{L}}(t,x) 
  = \int_0^t \int_{\R^d} E(s, w + {s \over t }( x- w)) \cdot \na_v \mu( { x- w \over t}) \, {dwds\over t^d}$$
  and therefore 
  $$\partial_{j}\mathcal{R}_{\text{L}}(t,x) = \mathcal{R}_{\text{L},j}^1 +   \mathcal{R}_{\text{L},j}^2$$
   where
 \begin{align*}
 \mathcal{R}_{\text{L},j}^1 &=
  \int_0^t \int_{\R^d} (s \partial_{j} E)(s, w + {s \over t }( x- w)) \cdot \na_v \mu( { x- w \over t}) \, {dwds\over t^{d+1}}
  \\
  & = \int_{0}^t \int_{\R^d}  (s\partial_{j}E)(s, x-(t-s)v) \cdot \partial_{v_{j}} \na_v \mu( v) \, {dwds\over t} \\
 \mathcal{R}_{\text{L},j}^2 &=
  \int_0^t \int_{\R^d} E(s, w + {s \over t }( x- w)) \cdot \partial_{v_{j}} \na_v \mu( { x- w \over t}) \, {dwds\over t^{d+1}} \\
  & = \int_{0}^t \int_{\R^d} E(s, x-(t-s)v) \cdot \partial_{v_{j}} \na_v \mu( v) \, {dwds\over t} .
 \end{align*}

 Consequently, we observe that
 $$  - \mathcal{R}_{j}^1 + \mathcal{R}_{\text{L},j}^1 ={ 1 \over t} \mathcal{T}[s \partial_{j} E , \mu](t,x), 
  \quad - \mathcal{R}_{j}^2 + \mathcal{R}_{\text{L},j}^2 ={ 1 \over t} \mathcal{T}[ E , \partial_{j}\mu](t,x),$$
  so that by (H1), we can apply Lemma~\ref{lem-decayR}.
Therefore, by \eqref{TcontL1}, \eqref{TcontLinfty} and the bootstrap assumption, 
   we get that
   $$  \|- \mathcal{R}_{j}^1(t) + \mathcal{R}_{\text{L},j}^1(t) \|_{L^1} \lesssim { \eps^2 \over t}
    , \quad  \|- \mathcal{R}_{j}^2(t) + \mathcal{R}_{\text{L},j}^3(t) \|_{L^\infty} \lesssim { \eps^2 \over t^{d + 1}},$$
     which allows to end the proof.   
     \end{proof}
   

%
%
%
%
   We are finally in position to conclude.
   
\begin{proof}[Proof of Theorem \ref{prop-main}]
This follows from the estimates of Proposition \ref{keyprop-initial}, Proposition \ref{propR}, and Proposition \ref{propnablaR}. 
 \end{proof}
 
 \section{Proof of Corollary \ref{coro1}}
 \label{lastsection}
 We can use again that $f$ is given by \eqref{charf} from the characteristic method. We can also  observe that
 $$\int_0^t E(s,X_{s,t}(x,v)) \cdot \na_v \mu(V_{s,t}(x,v))  \,  ds=  \int_0^t {d \over ds}V_{s,t}(x,v) \cdot \na_v \mu(V_{s,t}(x,v))  \,  ds
  = \mu(v) - \mu(V_{0, t}(x,v)).$$
  By using $(Y_{s,t}, W_{s,t})$ that  are defined in \eqref{exp-XV}, we can thus write that 
  \begin{equation} 
\label{fin1}   f(t, x+tv, v)  = f_{0}(x+ Y_{0,t}(x,v), v+W_{0,t}(x,v)) + \mu( v+ W_{0,t}(x, v)) - \mu(v).
  \end{equation}
  Consequently, the result follows immediately if we prove that there exists $(Y_\infty(x,v), W_{\infty}(x,v))$ such that
  $$ \| Y_{0,t}(x,v) -  Y_{\infty}(x,v)\|_{L^\infty_{x,v}} \lesssim \eps_{0} { \log (2+ t )\over 1 + t^{d-1}}, \quad
       \| W_{0,t}(x,v) -  W_{\infty}(x,v)\|_{L^\infty_{x,v}} \lesssim \eps_{0} { \log (2+ t) \over 1 + t^{d}}.$$
       We give the proof for  $Y_{0,t}$, the one for $W_{0,t}$ being similar.
       
        Let us define $\mathbb{Y}_{t}(s,x,v)= Y_{s,t}(x,v) \mathrm{1}_{0 \leq s \leq t}$.  
From  the integral equation \eqref{int-YY} and  the decay estimates \eqref{eq-thm}, we get that
          $\mathbb{Y}_{t} \in \mathscr{C}_{b}(\R^+ \times \mathbb{R}^{2d})$ and that for every $t_{1} \geq t_{2} \geq 1$, 
        $$ \|\mathbb{Y}_{t_{1}}  -\mathbb{Y}_{t_{2}} \|_{\mathscr{C}_{b}(\R^+ \times \mathbb{R}^{2d})}
         \lesssim   \eps_{0} { \log (2+ t_{2} )\over 1 + t_{2}^{d-1}}  +  \eps_{0} \|\mathbb{Y}_{t_{1}}  - 
          \mathbb{Y}_{t_{2}} \|_{\mathscr{C}_{b}(\R^+ \times \mathbb{R}^{2d})}.$$
          For $\eps_{0}$ sufficiently small, this yields by the Cauchy criterion that 
          $\lim_{t \rightarrow + \infty} \mathbb{Y}_{t}
           := \mathbb{Y}_{\infty}$ exists in $\mathscr{C}_{b}(\R^+ \times \mathbb{R}^{2d})$ and then that
           $$  \|\mathbb{Y}_{t}  - \mathbb{Y}_{\infty} \|_{\mathscr{C}_{b}(\R^+ \times \mathbb{R}^{2d})}
         \lesssim   \eps_{0} { \log (2+ t )\over 1 + t^{d-1}}.$$
          We conclude by setting $Y_{\infty}= \mathbb{Y}_{\infty}(0, \cdot).$
  
~\\
{\em Acknowledgements:}  TN was partially supported by the NSF under grant DMS-1764119 and an AMS Centennial Fellowship, and FR by the ANR ODA and Singflows. Part of this work was done while TN was visiting Princeton University.  

 \appendix 
 \section{}
 
 We recall~\eqref{defLP} for the Littlewood-Paley decomposition in $\mathbb{R}^n$ 
 (for $n=d$ and for $n= d+1$). Let us state the classical Bernstein Lemma.
 
 \begin{lem}
 \label{lemLp}
  For every $p \in [1, + \infty]$ {and any multi-index $\alpha$}, there exist $c>0$,  $C>0$ such that for every $u \in L^p$, we have  Bernstein's inequalities:
 \begin{eqnarray}
 \label{Bernstein1}  c2^{ |\alpha | q}\|u\|_{L^p} \leq \| \partial^\alpha (u_q )\|_{L^p} \leq C 2^{ |\alpha | q}\|  u \|_{L^p}, \quad
  \forall q \in \mathbb{Z}.
 \end{eqnarray}
 \end{lem}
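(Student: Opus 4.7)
The statement is the classical Bernstein inequality for dyadic blocks; as is standard, the left-hand inequality should be understood with $\|u_q\|_{L^p}$ in place of $\|u\|_{L^p}$, since a single dyadic block cannot control the full $L^p$ norm of an arbitrary $u$. With this understanding, my plan is to express $u_q$ as a convolution against a rescaled Schwartz kernel and then combine Young's convolution inequality with a straightforward scaling identity.

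For the upper bound, I would set $\psi := \mathcal{F}^{-1}\chi$, which lies in $\mathcal{S}(\mathbb{R}^n)$ because $\chi$ is smooth and compactly supported away from the origin. Defining $\psi^{(q)}(x) := 2^{nq}\psi(2^q x)$ so that $\widehat{\psi^{(q)}} = \chi_q$, one has $u_q = \psi^{(q)} * u$ in the sense of tempered distributions, and hence
\[
\partial^\alpha u_q = (\partial^\alpha \psi^{(q)}) * u.
\]
Young's convolution inequality gives $\|\partial^\alpha u_q\|_{L^p} \leq \|\partial^\alpha \psi^{(q)}\|_{L^1}\|u\|_{L^p}$, and the change of variables $y = 2^q x$ yields the scaling identity $\|\partial^\alpha \psi^{(q)}\|_{L^1} = 2^{|\alpha|q}\|\partial^\alpha \psi\|_{L^1}$, which is finite since $\psi \in \mathcal{S}$. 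This delivers the right-hand inequality of \eqref{Bernstein1}.

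For the lower bound, the plan is to invert $\partial^\alpha$ on the spectral support of $u_q$. I would pick $\tilde{\chi} \in C_c^\infty(\mathbb{R}^n \setminus \{0\})$ identically equal to one on the support of $\chi$, so that $\widehat{u_q}(\xi) = \tilde{\chi}(\xi/2^q)\widehat{u_q}(\xi)$. Using the multinomial expansion $|\xi|^{2|\alpha|} = \sum_{|\beta|=|\alpha|} c_\beta \xi^{2\beta}$ and the fact that the symbol $\tilde{\chi}(\xi)/|\xi|^{2|\alpha|}$ is smooth and compactly supported (on $\mathbb{R}^n \setminus \{0\}$), its inverse Fourier transform $H$ belongs to $\mathcal{S}(\mathbb{R}^n)$; rescaling, $H^{(q)}(x) := 2^{nq}H(2^q x)$ gives the identity
\[
u_q = (-1)^{|\alpha|} 2^{-2|\alpha|q}\sum_{|\beta|=|\alpha|} c_\beta\, H^{(q)} * \partial^{2\beta} u_q.
\]
Applying Young's inequality and the upper bound already established (to pass from $\partial^{2\beta}u_q$ to $2^{|\alpha|q}\|\partial^\beta u_q\|_{L^p}$, and one more factor of $2^{|\alpha|q}$ to a single $\partial^\beta u_q$), one arrives at the left-hand inequality of \eqref{Bernstein1}.

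The only subtle point is the inversion step: everything hinges on $\chi$ being supported \emph{away from the origin}, which is what makes $|\xi|^{-2|\alpha|}$ smooth on the support of $\tilde{\chi}$ and hence yields a Schwartz convolution kernel; the rest is bookkeeping via Young's inequality and dyadic scaling.
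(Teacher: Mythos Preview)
The paper does not prove this lemma; it simply refers to Bahouri--Chemin--Danchin. Your argument is the standard one found there, and your upper bound is entirely correct.

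For the lower bound, your identity together with Young's inequality correctly yields
\[
2^{|\alpha|q}\|u_q\|_{L^p}\ \lesssim\ \sum_{|\beta|=|\alpha|}\|\partial^\beta u_q\|_{L^p},
\]
which is exactly the lower Bernstein inequality as stated in that reference. Your last sentence, however---passing ``one more factor of $2^{|\alpha|q}$ to a single $\partial^\beta u_q$''---is muddled and in fact cannot be carried out: the bound $c\,2^{|\alpha|q}\|u_q\|_{L^p}\le\|\partial^\alpha u_q\|_{L^p}$ for a \emph{fixed} multi-index $\alpha$ is false in general. For $n\ge 2$, take $\widehat{u_q}$ concentrated near a point of the annulus where $\xi_1=0$; then $\|\partial_1 u_q\|_{L^p}$ can be made arbitrarily small while $\|u_q\|_{L^p}$ stays of order one. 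So in addition to your correct observation that $\|u\|_{L^p}$ must be read as $\|u_q\|_{L^p}$ on the left of \eqref{Bernstein1}, the lower inequality should also be read with a supremum (or sum) over $|\beta|=|\alpha|$ on the right. Your argument proves precisely that form, and nothing stronger is needed for the applications in the paper.
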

  We refer for example to \cite{Bahouri-Chemin-Danchin} for the proof.
 As an application, we get
  \begin{lem}
 \label{rhoE}
 Let  $P_{1}$ and $P_{2}$  be  homogeneous polynomials of degree  $1$  and $2$.
     For all $p \in [1,+\infty]$, for all $u \in L^p$, for all $\ell \in \mathbb{N}$,
  \begin{eqnarray}
&  \label{riesz1} \|P_{1}(D) ( 1 - \Delta)^{-1} u \|_{L^p} \lesssim \|u \|_{L^p}, \\
 & \label{riesz2}  \|P_{2} (D) ( 1 - \Delta)^{-1} u_\ell \|_{L^p} \lesssim 2^{\ell \delta} \| u \|_{L^p},
  \end{eqnarray}
  where $u_\ell$ is defined as in~\eqref{defLP} and $\delta \in (0,1)$ is arbitrarily small.
  \end{lem}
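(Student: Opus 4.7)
The plan is to prove both estimates by showing that the underlying Fourier multipliers have convolution kernels whose $L^1$ norms are controlled appropriately, after which Young's inequality immediately delivers the $L^p$ bound for every $p \in [1,+\infty]$. The key tool is a standard scaling argument combined with Sobolev-type control of derivatives.

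For \eqref{riesz1}, the cleanest route is to observe that $P_1(D)(1-\Delta)^{-1}$ is convolution with $P_1(D)G_2$, where $G_2$ denotes the Bessel potential of order $2$, whose Fourier transform is $(1+|\xi|^2)^{-1}$. This potential decays exponentially at infinity and near the origin has a singularity of order $|x|^{-(d-2)}$; differentiating once therefore produces a singularity of order $|x|^{-(d-1)}$, which is locally integrable in any dimension $d\geq 2$. Hence $P_1(D)G_2\in L^1(\R^d)$, and Young's inequality yields~\eqref{riesz1}.

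For \eqref{riesz2}, since the symbol $P_2(\xi)/(1+|\xi|^2)$ is of Calder\'on--Zygmund type at high frequencies, a global $L^1$-convolution-kernel approach cannot succeed; the frequency localization of $u_\ell$ is therefore crucial. Writing $P_2(D)(1-\Delta)^{-1}u_\ell=m_\ell(D)u$ with $m_\ell(\xi)=P_2(\xi)\chi_\ell(\xi)/(1+|\xi|^2)$, the task reduces to showing that $\|\mathcal{F}^{-1}m_\ell\|_{L^1}\lesssim 2^{\ell\delta}$. After the rescaling $\xi=2^\ell\eta$, which leaves the $L^1$ norm of the inverse Fourier transform invariant, this amounts to a bound for
$$\tilde m_\ell(\eta):=m_\ell(2^\ell\eta)=\frac{P_2(\eta)\,\chi(\eta)}{2^{-2\ell}+|\eta|^2},$$
which is smooth and supported in the fixed annulus where $\chi$ lives. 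For $\ell\in\mathbb{N}$, the denominator is bounded below uniformly on this support (where $|\eta|$ is itself bounded below by a positive constant), so $\tilde m_\ell$ and all its derivatives admit bounds independent of $\ell$. A standard Sobolev-type argument (bounding $\|\langle x\rangle^N\mathcal{F}^{-1}\tilde m_\ell\|_{L^\infty}$ by the $W^{N,\infty}$-norm of $\tilde m_\ell$ for some $N>d$) then gives $\|\mathcal{F}^{-1}\tilde m_\ell\|_{L^1}\leq C$ uniformly in $\ell\geq 0$, which is in fact stronger than the announced bound $C\,2^{\ell\delta}$. Young's inequality concludes.

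The argument is essentially mechanical; the only conceptually subtle point is the uniform control of all derivatives of $\tilde m_\ell$ in $\ell$, which relies precisely on $|\eta|$ being bounded below on the support of $\chi$: without this frequency localization, the singularity of $P_2(\xi)/|\xi|^2$ at $\xi=0$ would prevent any uniform bound, and one would recover at best the classical Calder\'on--Zygmund theory, which fails at the endpoints $p=1$ and $p=\infty$.
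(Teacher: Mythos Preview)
Your proof is correct, and in fact your bound for~\eqref{riesz2} is slightly sharper than the one stated (you obtain a constant independent of~$\ell$, which of course implies the $2^{\ell\delta}$ bound). However, your route differs from the paper's. The paper argues entirely through the Littlewood--Paley decomposition and the Bernstein inequalities of Lemma~\ref{lemLp}: one writes
\[
\|P_j(D)(1-\Delta)^{-1}w\|_{L^p}\lesssim\sum_{q\in\mathbb{Z}}\frac{2^{jq}}{1+2^{2q}}\|w_q\|_{L^p}
\]
and then sums. For $j=1$ the series converges absolutely, giving~\eqref{riesz1}; for $j=2$ the high-frequency tail is handled by pulling out a factor $2^{q\delta}$ from $\|w_q\|_{L^p}$ and summing the remaining $2^{-q\delta}$, which, applied to $w=u_\ell$, yields the $2^{\ell\delta}$ factor. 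Your approach instead treats~\eqref{riesz1} via the classical fact that $\nabla G_2\in L^1$ for the Bessel potential $G_2$, and treats~\eqref{riesz2} by rescaling the localized multiplier to a fixed annulus and invoking a Sobolev-type bound on the inverse Fourier transform. The paper's argument is shorter and fully consistent with the dyadic machinery used elsewhere; your argument is more self-contained, avoids the Littlewood--Paley apparatus for~\eqref{riesz1}, and exposes more transparently why the frequency localization is essential in~\eqref{riesz2} (and why the $2^{\ell\delta}$ loss is in fact unnecessary).
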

  \begin{proof}
   By using the homogeneous Littlewood-Paley decomposition and the Bernstein inequalities, we get 
   $$   \|P_{1}(D) ( 1 - \Delta)^{-1}  u \|_{L^p} \lesssim \sum_{q \in \mathbb{Z}}
    { 2 ^{q} \over 1 + 2^{2 q} } \| u_{q}\|_{L^p}
     \lesssim \|u \|_{L^p} (\sum_{q \leq 0} 2^q + \sum_{q \geq 0} 2^{-q}).$$
      For the second estimate, we write
     $$
     \begin{aligned}  \|P_{2}(D) ( 1 - \Delta)^{-1} w \|_{L^p} 
     &\lesssim \sum_{q \in \mathbb{Z}}
    { 2 ^{2q} \over 1 + 2^{2 q} } \|w_{q}\|_{L^p}
      \\&\lesssim   \|w\|_{L^p}  \sum_{q < 0} 2^{2q} + \sup_{q \geq 0} 2^{q \delta} \|w_{q}\|_{L^p}  \sum_{q \geq 0} 2^{-q \delta}
      \end{aligned}$$
     and apply it for $w= u_\ell$,
     which ends the proof. 
     \end{proof}
%

\bibliographystyle{plain}

~\\
{\sc Centre de Math\'ematiques Laurent Schwartz (UMR 7640), Ecole Polytechnique, Institut Polytechnique de Paris, 91128 Palaiseau Cedex, France. 
\\Email: daniel.han-kwan@polytechnique.edu}

~\\
{\sc Department of Mathematics, Penn State University, State College, PA 16803, USA. 
\\Email: nguyen@math.psu.edu}

~\\
{\sc Laboratoire de Math\'ematiques d'Orsay (UMR 8628), Universit\'e Paris-Sud, 91405 Orsay Cedex, France. Email: frederic.rousset@math.u-psud.fr}

\end{document}